\newcommand{\E}{\mathbb{E}}
\renewcommand{\P}{\mathbb{P}}
\newcommand{\R}{\mathbb{R}}
\newcommand{\N}{\mathbb{N}}
\newcommand{\Lbb}{\mathbb{L}}
\newcommand{\Xcal}{\mathcal{X}}
\newcommand{\Ycal}{\mathcal{Y}}
\newcommand{\Acal}{\mathcal{A}}
\newcommand{\Mcal}{\mathcal{M}}
\newcommand{\Pcal}{\mathcal{P}}
\newcommand{\Ccal}{\mathcal{C}}
\newcommand{\Ucal}{\mathcal{U}}
\newcommand{\Zcal}{\mathcal{Z}}
\newcommand{\Jcal}{\mathcal{J}}
\newcommand{\efrak}{\mathfrak{e}}
\newcommand{\supp}{\textnormal{supp}}
\newcommand{\Bpi}{\boldsymbol{\pi}}
\renewcommand{\epsilon}{\varepsilon}
\newcommand{\INTDom}[3]{\int_{#2} #1 \,\mathrm{d} #3}
\theoremstyle{plain}
\newtheorem{theorem}{Theorem}[section]
\newtheorem{proposition}[theorem]{Proposition}
\newtheorem{lemma}[theorem]{Lemma}
\newtheorem{corollary}[theorem]{Corollary}
\newenvironment{taggedhyp}[1]
{\taggedhypx}
{\endtaggedhypx}
\theoremstyle{definition}
\newtheorem{definition}[theorem]{Definition}
\theoremstyle{remark}
\newtheorem{remark}[theorem]{Remark}
\newcommand{\dd}{\,\mathrm{d}}
\newcommand{\licorm}{licorm}
\begin{document}
\numberwithin{equation}{section}

\title{A Characterization of Law-Invariant and Coherent Risk Measures through Optimal 
Transport}

\author{Riccardo Bonalli\thanks{Université Paris-Saclay, CNRS, CentraleSupélec, Laboratoire des signaux et systèmes, 91190, Gif-sur-Yvette, France.} {}\thanks{Fédération de Mathématiques de CentraleSupélec, 91190, Gif-sur-Yvette, France.} \quad Benoît Bonnet-Weill\footnotemark[1]{ }\footnotemark[2] \quad Laurent Pfeiffer\thanks{Université Paris-Saclay, CNRS, CentraleSupélec, Inria, Laboratoire des signaux et systèmes, 91190, Gif-sur-Yvette, France.} \footnotemark[2]}

\date{\today}

\maketitle

\begin{abstract}
In this article, we propose a novel characterization of law-invariant and coherent risk measures, based on a generalized optimal transportation problem in which the second marginal of the admissible plans is not fixed, but required to lie within a target set of probability measures.
One of the main contributions of this work is a general representation formula for such risk measures, which is closely related to Kusuoka's theorem. When the aforementioned target set is convex, our representation result allows for the systematic derivation of general duality formulas. To illustrate our findings, we explicitly compute the target sets associated with several classical law-invariant coherent risk measures, including the prototypical conditional value at risk and higher moment measures.
\end{abstract}

\setcounter{tocdepth}{1}

\tableofcontents

\section{Introduction}

This article focuses on a new class of risk measures defined through an optimal transport problem. Given $p \in [1,+\infty]$, we denote by $q \in [1,\infty]$ its conjugate exponent, and fix a subset $R \subset\Pcal_q(\R)$ of probability measures with finite moment of order $q$. The risk measure of interest $\rho_R : \Lbb^p(\Omega,\R) \to \R$ is then defined as
\begin{equation*}
\rho_R(X) \triangleq \chi_R(\mathbb{P}_X),
\end{equation*}
where $\mathbb{P}_X \in\Pcal_p(\R)$ stands for the probability distribution of a random variable $X \in\Lbb^p(\Omega,\R)$, and $\chi_R \colon \Pcal_p(\R) \to \R$ is the value function of the following generalized optimal transport problem
\begin{equation} \label{eq:OT_intro}
\chi_R(m)\triangleq
\sup_{\pi \in \Pi(m,R)} \, 
\int_{\R^2} xy \dd \pi(x,y).
\end{equation}
Therein, the set $\Pi(m,R)$ stands for the collection of all transport plans whose first marginal is equal to $m \in \Pcal_p(\R)$, and whose second marginal belongs to $R \subset\Pcal_q(\R)$. The peculiarity of the above problem, in contrast with standard optimal transport investigated deeply e.g.\@ in \cite{AGS,OTAM,villani2009optimal}, lies in the fact that the second marginal of the transport plans appearing in \eqref{eq:OT_intro} is not fixed, but merely required to lie within some prescribed set.

\begin{remark}[An enlightening example]
\label{rmk:Intro}
The prototypical example of coherent risk measures is the so-called \textnormal{Conditional Value at Risk} with a given probability level $\beta \in [0,1)$, defined by 
\begin{equation} \label{eq:dualityCV@R}
\mathrm{CV@R}_{\beta}(X) \triangleq \inf_{t \in \R} \bigg\{ t + \frac{1}{1-\beta} \mathbb{E} \big[ (X-t)_+ \big] \bigg\}.
\end{equation}
We shall below see that the latter fits into our framework and corresponds to the simple case in which $R \triangleq \{ r_\beta \}$, with
\begin{equation*}
r_\beta \triangleq \beta \delta_0
+ (1-\beta) \delta_{1/(1-\beta)}.
\end{equation*}
This fact will be rigorously demonstrated with the help of a duality formula we shall discuss later on, although one may easily convey the underlying intuition when the probability distribution $m \in\Pcal_p(\R)$ of the random variable $X \in\Lbb^p(\Omega,\R)$ is nonatomic. In that case, there exists a real number $t_{\beta} \in \R$ -- called the Value at Risk of $X$ with probability level $\beta$ -- such that 
\begin{equation*}
\mathbb{P}(X < t_{\beta}) = \beta \qquad \text{and} \qquad \mathbb{P}(X\geq t_{\beta})= 1- \beta. 
\end{equation*}
Then, it is commonly known that $\mathrm{CV@R}_{\beta}(X) = \E\big[X \,|\, X \geq t_{\beta} \big]$, see for instance \cite{rockafellar2002conditional}, and it can be shown that the plan defined by
\begin{equation*}
\bar{\pi} \triangleq
\left( m_{\llcorner(-\infty,t_{\beta})} \times \delta_0 \right) +
\left( m_{\llcorner[t_{\beta},+\infty)} \times \delta_{1/(1-\beta)} \right)
\end{equation*}
is optimal for \eqref{eq:OT_intro}, so that 
\begin{equation*}
\begin{aligned}
\chi_{r_{\beta}}(m) & = \int_{\R^2} xy \dd \bar{\pi}(x,y)
\\
& = \frac{1}{1-\beta} \int_{\R} x \dd m_{\llcorner[t_{\beta},+\infty)}(x) \\[0.7em]
& = \frac{\mathbb{E}\big[ X \mathds{1}_{\{X \geq t_{\beta}\}} \big]}{\mathbb{P}\big[ X \geq t_{\beta} \big]} \\
& = \mathbb{E}\big[ X \,|\, X \geq t_{\beta} \big]= \mathrm{CV@R}_{\beta}(X),
\end{aligned}
\end{equation*}
as announced. 
\end{remark} 

\paragraph*{Overview of contributions} The first main contribution of this article is a complete characterization of \textit{law-invariant coherent risk measures} (coined ``licorms'' in the sequel) through the generalized  optimal transport problem \eqref{eq:OT_intro}.
The definitions of law-invariance and coherence for risk measures, which were introduced in \cite{artzner1999coherent}, are recalled in Definition \ref{def:coherence} below. More specifically, we first show in Theorem \ref{thm:kusuoka_converse} that if $R \subset \Pcal_q(\R)$ only contains measures supported in $\R_+$ with expectation equal to 1, then $\rho_R : \Lbb^p(\Omega,\R) \to \R$ is indeed a law-invariant coherent risk measure. These facts will be established via direct proofs, using only basic tools from optimal transport theory, and primarily the gluing lemma (see e.g.\@ \cite[Lemma 5.3.2]{AGS}). 

In Theorem \ref{thm:kusuoka}, we establish a sharp converse of Theorem \ref{thm:kusuoka_converse} stating that if the probability space $(\Omega,\mathcal{A},\P)$ is nonatomic, every law-invariant coherent risk measure is then of the form $\rho_R \colon \Lbb^p(\Omega,\R) \to \R$ with $R \subset\Pcal_q(\R)$ containing only measures supported in $\R_+$ whose expectation is equal to 1. This will be directly deduced from the standard dual representation of coherent risk measures, which states that for every such $\rho : \Lbb^p(\Omega,\R) \to \R$, there exists a convex set of nonnegative random variables $\mathfrak{Y} \subset\Lbb^q(\Omega,\R)$ with expectation equal to 1, such that
\begin{equation} \label{eq:dual_rep}
\rho(X)= \sup_{Y \in \mathfrak{Y}} \mathbb{E} [ XY ],
\end{equation}
for every random variable $X \in\Lbb^p(\Omega,\R)$. We will then show that if $\rho$ also happens to be law-invariant, then it is of the form $\rho_R$ with $R \triangleq \{ \mathbb{P}_Y \, | \, Y \in \mathfrak{Y} \}$. In a very informal fashion, one may think of the optimal transport problem \eqref{eq:OT_intro} as being the deterministic counterpart of \eqref{eq:dual_rep}, and then of Theorem \ref{thm:kusuoka} as a variant of the famed Kusuoka theorem, derived in the seminal paper \cite{kusuoka2001law}. The latter essentially states that any law-invariant coherent risk measure can be represented as the supremum of a family of risk measures, which can be all expressed as (possibly continuous) convex combinations of $(\mathrm{CV@R}_{\beta})_{\beta\in(0,1]}$. For a detailed introduction to such objects, we refer the reader to \cite{Shapiro2021} and \cite[Section 4.5]{follmer2011stochastic}. While the latter and our optimal-transport representation are seemingly different, they happen to be strongly connected as more amply detailed in Remark \ref{remark:kusuoka_comp}.

The second main contribution of this article takes the form of two general duality formulas for the optimal transport problem \eqref{eq:OT_intro}, under the assumption that $R \subset\Pcal_q(\R)$ is convex. Both duality results are stated and proven in Theorem \ref{theo:duality}. It is worth noting that when $R$ is a singleton, such formulas boil down to the well-known Kantorovich duality theorem. As already mentioned, they allow, among other things, to justify rigorously that $\rho_{R}$ coincides with $\mathrm{CV@R}_{\beta}$ as defined in \eqref{eq:dualityCV@R}, when $R= \{ r_{\beta} \}$. The existence of solutions to the dual problem of \eqref{eq:OT_intro} are investigated in Theorem \ref{theo:existence_finitecase} and Theorem \ref{theo:existence_bounded}, under suitable assumptions on the set $R \subset\Pcal_q(\R)$. To the best of our knowledge, results of this kind were unavailable in the optimal transport literature, with the exception of the very recent preprint \cite{nenna2025convergence} (see in particular Proposition 2.2 therein), which focuses solely on discrete measures.

\paragraph*{Related works} Numerous articles have investigated optimal transport-based risk measures in the recent literature, in particular in the framework of Wasserstein Distributionally Robust Optimization (see e.g. \cite{kuhn2019wasserstein,zhao2018data}), a topic which has received a lot of attention lately. We refer the reader to \cite{azizian2023regularization} for a study of regularization techniques in this context. Another example appears in risk quantization by magnitude and propensity \cite{faugeras2024risk}. 
Independently, several works have investigated generalizations and refinements of Kusuoka's representation theorem. Among others, we mention \cite{dentcheva2010kusuoka}, which establishes a Kusuoka representation of high-order dual risk measures, as well as  \cite{shapiro2013kusuoka} that provides a characterization of risk measures taking the form of a convex combination of Conditional Values at Risk, and studies their fundamental properties. We also point to the article \cite{jouini2006law} which proves an extension of Kusuoka's theorem to the case of law-invariant convex risk measures, which need not be coherent a priori. Lastly, a description of those risk measures admitting a Kusuoka representation in general nonatomic probability spaces was also established in \cite{noyan2015kusuoka}.

\paragraph*{Organization of the paper} In Section \ref{section:preliminaries}, we introduce the main notations and preliminary results needed in the sequel. Section \ref{section:ot-rm} investigates risk measures defined through a generalized optimal transport problem, while Section \ref{section:characterization} deals with the optimal transport characterization of law-invariant and coherent risk measures. Duality formulas for optimal transport-based risk measures are then established in Section \ref{section:duality}. Finally, some examples involving the Conditional Value at Risk, higher moment measures and perspectives towards $\phi$-divergences are discussed in Section \ref{section:examples}.


\section{Preliminaries and definition of licorms}
\label{section:preliminaries}


\paragraph{Measures and function spaces}

Given a nonempty closed subset $\Xcal \subset\R$, denote by $\Pcal(\Xcal)$ the set of Borel probability measures on $\Xcal$. Given $m \in \Pcal(\Xcal)$ and $p \in [1,+\infty]$, we let 
\begin{equation*}
\Mcal_p(m) \triangleq
\begin{cases}
\begin{array}{cl}
{\displaystyle \bigg( \int_{\R} |x|^p \dd m(x) \bigg)^{1/p}}
& \text{ if $p < +\infty$} \\[1em]
{\displaystyle \sup_{x \in \text{supp}(m)} |x|} & \text{ if $p= +\infty$}.
\end{array}
\end{cases}
\end{equation*}
Accordingly, we define the subset $\Pcal_p(\Xcal) \subset\Pcal(\Xcal)$ by 
\begin{equation*}
\Pcal_p(\Xcal) \triangleq \Big\{ m \in \Pcal(\Xcal) ~\,\textnormal{s.t.}~ \Mcal_p(m) < +\infty \Big\}, 
\end{equation*}
where for $p=+\infty$, the space $\Pcal_p(\Xcal)$ is simply that of compactly supported measures in $\Xcal$, that we denote by $\Pcal_c(\Xcal)$. We shall likewise say that a subset $R \subset\Pcal_p(\Xcal)$ is \emph{bounded} provided that 
\begin{equation*}
\sup_{r \in R} \, \Mcal_p(r) < +\infty.
\end{equation*}
Note that $\Mcal_p(m) \leq \Mcal_q(m)$ whenever $p \leq q$ by H\"older's inequality, so that $\Pcal_1(\Xcal) \subset \Pcal_p(\Xcal) \subset \Pcal_q(\Xcal)$. With a slight abuse of notation, we define the \emph{expectation} of a measure $m \in\Pcal_p(\Xcal)$ as
\begin{equation*}
\E[m]
\triangleq \int_{\R} x \dd m(x).
\end{equation*}
Let us recall now the definition of the $p$-Wasserstein distance, which is given by  
\begin{equation}
\label{eq:def_wasserstein}
W_p(m_1,m_2) := \inf_{\pi \in \Pi(m_1,m_2)} \,
\bigg( \INTDom{|x_1 - x_2|^2}{\R^2}{\pi(x_1,x_2)} \bigg)^{1/2},
\end{equation}
for every $m_1,m_2 \in \mathcal{P}_p(\R^2)$. Note that this definition makes sense for every $p \in [1,\infty]$ (see, e.g., \cite[Section 3.2]{OTAM} for the case $p=+\infty$). Lastly, we denote by $F_m^{-1} \colon [0,1] \rightarrow [ -\infty,+\infty]$ the right-inverse of the cumulative distribution of a probability measure $m \in \Pcal(\R)$, defined by
\begin{equation*}
F_m^{-1}(t)\triangleq \inf \Big\{ x \in \R ~\,\text{s.t.}~ m\big( (-\infty,x]\big) \geq t \Big\},
\end{equation*}
for every $t \in [0,1]$.

We denote by $\Ccal_p^0(\Xcal)$ the space of continuous functions with $p$-growth, namely the space of all those functions $f \in\Ccal(\Xcal)$ for which
\begin{equation*}
\| f \|_p \triangleq \sup_{x \in \Xcal} \
\frac{|f(x)|}{1+ |x|^p} < +\infty,
\end{equation*}
if $p \in [1,+\infty)$. In the case in which $p= +\infty$, the space $\Ccal_p^0(\Xcal)$ is simply that of bounded and continuous real-valued functions on $\Xcal$, which we also denote by $\Ccal_b(\Xcal)$. Note that the normed spaces $(\Ccal_p^0(\Xcal),\| \cdot \|_p)$ are complete for all $p\in[1,+\infty]$. When $p =+\infty$ this is a well-known fact, whereas for $p \in [1,+\infty)$, it stems from the observation that the linear map $ f \in \Ccal_p^0(\Xcal) \mapsto f/ (1+ |\cdot |^p)$ is a continuous bijection between $\Ccal_p^0(\Xcal)$ and the Banach space $\Ccal_b(\Xcal)$, which implies that $\Ccal_p^0(\Xcal)$ must also be a Banach space.

Finally given $m \in \Pcal_p(\Xcal)$, we have for any $f \in \Ccal_p^0(\Xcal)$ that
\begin{equation*}
\bigg| \INTDom{f(x)}{\Xcal}{m(x)} \bigg|
\leq
\left\{
\begin{aligned}
& \| f \|_p  \, \Big( 1+ \Mcal_p^p(m) \Big) && 
\text{ if } p \in [1,+\infty), \\
& \| f \|_p \; \Mcal_{\infty}(m) & & \text{ if }p=
+\infty.
\end{aligned}
\right.
\end{equation*}
This allows us to see any probability measure in $\Pcal_p(\Xcal)$ as a bounded linear form on $\Ccal_p^0(\Xcal)$, that is, as an element of $\Ccal_p^0(\Xcal)^*$ whose action is given via the duality pairing 
\begin{equation*}
\langle m , f \rangle \triangleq \INTDom{f(x)}{\Xcal}{m(x)}
\end{equation*}
for each $(f,m) \in \Ccal_p^0(\Xcal) \times \Ccal_p^0(\Xcal)^*$.


\paragraph{Random variables and risk measures}

From now on, we fix a probability space $(\Omega, \mathcal{A}, \mathbb{P})$. Given $p \in [1,+\infty)$ and a closed subset $\mathcal{Z}$ of $\R^m$, we let $\Lbb^p(\Omega,\mathcal{Z})$ denote the space of random variables on $(\Omega, \mathcal{A}, \mathbb{P})$ valued in $\mathcal{Z}$ and such that $\mathbb{E}[ | X |^p] <+\infty$, equipped with the norm
\begin{equation*}
\| X \|_p \triangleq \mathbb{E} \big[ |X|^p \big]^{1/p}.
\end{equation*}
When $p=+\infty$, recall that $\Lbb^p(\Omega,\mathcal{Z})$ stands for the space of essentially bounded random variables equipped with the supremum norm. Given a random variable $X \in \Lbb^p(\Omega,\Zcal)$, we shall write $\mathbb{P}_X \in \Pcal_p(\Zcal)$ to refer to its probability distribution. Finally, given $X \in \mathbb{L}^p(\Omega,\R)$, we call right-inverse of the cumulative function of $X$ the function $F_X^{-1} \colon \R \rightarrow [-\infty,+\infty]$ defined as $F_X^{-1}(t) := F_{\P_X}^{-1}(t)$ for every $t\in[0,1]$.

\begin{definition}[Coherent and law-invariant risk measures]
\label{def:coherence}
We call \emph{risk measure} any real-valued mapping $\rho \colon \Lbb^p(\Omega,\R) \rightarrow \R$. A risk measure is said to be \emph{coherent} if it satisfies the following properties for all $X,X' \in \Lbb(\Omega,\R)$, every $\alpha \in \R$, each $\delta \geq 0$, and any $\theta \in [0,1]$.
\begin{enumerate}
\item[(i)] $\rho(X+ \alpha)= \rho(X) + \alpha$ \hfill (Translation invariance)
\item[(ii)] $\rho(\delta X)= \delta \rho(X)$ \hfill (Homogeneity)
\item[(iii)]  $\rho(X_1) \leq \rho(X_2)$ whenever $X_1 \leq X_2$ almost surely \hfill (Monotonicity)
\item[(iv)] $\rho( (1-\theta) X_1 + \theta X_2) \leq (1-\theta) \rho(X_1) + \theta \rho(X_2)$. \hfill (Convexity)
\end{enumerate}
We further say that $\rho$ is \emph{law-invariant} if for all $X_1,X_2 \in \Lbb^p(\Omega,\R)$, it holds that 
\begin{equation*}
\rho(X_1)= \rho(X_2) \qquad \text{whenever} \qquad \mathbb{P}_{X_1} = \mathbb{P}_{X_2}.
\end{equation*}
\end{definition}
Throughout the manuscript, we will use the term ``\emph{\licorm{}s}'' to refer to Law-Invariant COherent Risk Measures. Note that contrarily e.g.\@ to \cite[Section 6.3]{Shapiro2021}, we do not consider risk measures taking the value $+\infty$. We recall in addition that any coherent risk measure $\rho : \Lbb^p(\Omega,\R) \to\R$ with finite values is continuous (since it is convex), and admits the dual representation
\begin{equation} \label{eq:dual_repbis}
\rho(X)= \sup_{Y \in \mathfrak{Y}} \mathbb{E}[ XY ],
\end{equation}
where $\mathfrak{Y} \subset \Lbb^q(\Omega,\R)$ is a convex set of nonnegative random variables with unit expectation.


\paragraph{Generalized optimal transport.}

We now fix $p \in [1,+\infty]$ and denote by $q \triangleq p/(p-1)$ its conjugate exponent, with the usual conventions that $q= +\infty$ if $p=1$ and $q=1$ if $p=+\infty$.
Throughout the article, we will work with a subset $R \subset\mathcal{P}_q(\R)$, to which we associate the set $\mathcal{Y}_R \subset\R$ given by
\begin{equation*}
\mathcal{Y}_R \triangleq \bigcup_{r \in R} \text{supp}(r).
\end{equation*}
The set $R \subset\Pcal_q(\R)$ is the main ingredient involved in the definition of the generalized optimal transport problem we consider, and is assumed to satisfy all or part of the following assumptions.

\begin{taggedhyp}{\textnormal{(OT)}}
\label{hyp:OT} \hfill
\begin{enumerate}
\item[\textnormal{(i)}] The set $R \subset \mathcal{P}_q(\R)$ is bounded.
\item[\textnormal{(ii)}] It holds that $\mathcal{Y}_R \subset\R_+$ and $\mathbb{E}[r]=1$ for each $r \in R$.
\item[\textnormal{(iii)}] The set $R \subset \mathcal{P}_q(\R)$ is convex and closed for the weak-$^*$ topology.
\end{enumerate}
\end{taggedhyp}

In the remainder of the article, we will suppose that Assumption \ref{hyp:OT}-(i) is in force. Given $m \in\Pcal_p(\R)$, we consider the generalized optimal transport problem 
\begin{equation}
\label{eq:gal_ot}
\chi_R(m) \triangleq
\sup_{\pi \in \Pi(m,R)} \, \int_{\R^2} xy \dd \pi(x,y)
\end{equation}
where
\begin{equation*}
\Pi(m,R)
\triangleq
\Big\{ \pi \in \Pcal(\R^2) ~\,\textnormal{s.t.}~ \efrak^1_{\sharp} \pi= m ~~\text{and}~~ \efrak^2_{\sharp} \pi \in R \Big\}
\end{equation*}
with $\efrak^1(x,y) \triangleq x$ and $\efrak^2(x,y) \triangleq y$. Note that the integrability of the mapping $(x,y) \in \R^2 \mapsto xy \in \R$ will be justified in Lemma \ref{lemma:bound_transport} below. When $R \triangleq \{ r \}$ is a singleton, we shall simply write $\chi_r(m)$ and $\Pi(m,r)$ for the latter quantities, and note in particular that
\begin{equation*}
\label{eq:sup_formula}
\chi_R(m)= \sup_{r \in R} \, \chi_r(m).
\end{equation*}
By a direct application of H\"older's inequality, we have the following estimates.

\begin{lemma}[Elementary bounds on licorms]
\label{lemma:bound_transport}
For any $m \in \Pcal_p(\R)$ and any bounded subset $R \subset \Pcal_q(\R)$, it holds that
\begin{equation*}
\int_{\R^2} |xy| \dd \pi(x,y)
\leq \Mcal_p(m) \bigg( \sup_{r \in R} \, \Mcal_q(r) \bigg)
\end{equation*}
for any $\pi \in \Pi(m,R)$. In particular $| \chi_R(m) |
\leq \Mcal_p(m) \big( \sup_{r \in R} \, \Mcal_q(r) \big)$.
\end{lemma}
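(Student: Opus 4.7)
The plan is to apply Hölder's inequality directly on the product space $\R^2$ equipped with the measure $\pi$, exploiting the definition of the marginals to convert integrals against $\pi$ into integrals against $m$ and some $r \in R$ respectively.

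First I would fix $\pi \in \Pi(m,R)$ and denote $r \triangleq \efrak^2_\sharp \pi \in R$. In the case $p,q \in (1,+\infty)$, Hölder's inequality applied to the functions $(x,y) \mapsto |x|$ and $(x,y) \mapsto |y|$ with respect to $\pi$ yields
\begin{equation*}
\int_{\R^2} |xy| \dd \pi(x,y) \leq \bigg( \int_{\R^2} |x|^p \dd \pi(x,y) \bigg)^{1/p} \bigg( \int_{\R^2} |y|^q \dd \pi(x,y) \bigg)^{1/q}.
\end{equation*}
Since the first marginal of $\pi$ is $m$ and the second is $r$, the two integrals on the right collapse to $\Mcal_p(m)^p$ and $\Mcal_q(r)^q$ respectively, giving $\int |xy| \dd \pi \leq \Mcal_p(m) \Mcal_q(r)$. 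Taking the supremum over $r \in R$ (which is finite by Assumption \ref{hyp:OT}-(i)) gives the desired bound.

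The only mildly nontrivial part is handling the endpoint cases $p = 1$ (so $q = +\infty$) and $p = +\infty$ (so $q = 1$), where one cannot directly write the Hölder expression above. In these cases I would argue by pointwise bounds: if $p = +\infty$, then $|x| \leq \Mcal_\infty(m)$ for $m$-a.e.\@ $x$, hence for $\pi$-a.e.\@ $(x,y)$ by the marginal property, so
\begin{equation*}
\int_{\R^2} |xy| \dd \pi(x,y) \leq \Mcal_\infty(m) \int_{\R^2} |y| \dd \pi(x,y) = \Mcal_\infty(m) \Mcal_1(r),
\end{equation*}
and symmetrically for $p = 1$. A small subtlety is that the essential supremum appearing in $\Mcal_\infty$ is taken over $\supp(m)$; the fact that $\efrak^1_\sharp \pi = m$ ensures $\pi$ is concentrated on $\supp(m) \times \R$, so the bound transfers correctly.

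Finally, the bound on $|\chi_R(m)|$ follows immediately: the upper bound $\chi_R(m) \leq \Mcal_p(m) \sup_{r \in R} \Mcal_q(r)$ comes from taking the supremum over $\pi$ of $\int xy \dd \pi \leq \int |xy| \dd \pi$, while the lower bound follows from the existence of at least one admissible plan (e.g.\@ the product measure $m \otimes r$ for any $r \in R$), which gives $\chi_R(m) \geq -\Mcal_p(m) \sup_{r \in R} \Mcal_q(r)$. There is no real obstacle here; the statement is essentially a bookkeeping exercise around Hölder's inequality once the marginal structure of $\Pi(m,R)$ is used.
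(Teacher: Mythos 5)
Your proposal is correct and matches the paper's approach: the paper states this lemma as following from "a direct application of Hölder's inequality" without further detail, and your argument (Hölder on the product space, using the marginals to collapse the integrals, with the endpoint cases handled by pointwise essential-supremum bounds) is exactly the intended filling-in of that one-line justification.
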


In the following proposition, we gather a few useful and well-known results from classical optimal transport theory, see e.g.\@ \cite{OTAM}. As before, we fix some $p \in [1,+\infty]$ and let $q \triangleq (p-1)/p$.

\begin{proposition}[Basic optimal transport results]
\label{proposition:ot}
Fix some $m \in \Pcal_p(\R)$ and let $\Xcal \triangleq \supp(m)$. Let also $\mathcal{Y} \subset\R$ be a closed subset, which is assumed to be bounded if $q= +\infty$, and fix an element $r \in \Pcal_q(\Ycal)$. Then, there exists a unique transport plan $\bar{\pi} \in \Pi(m,r)$ such that
\begin{equation*}
\chi_r(m) = \int_{\R^2} xy \dd \bar{\pi}(x,y).
\end{equation*}
Moreover, the latter is given explicitly by $( F_{m}^{-1} , F_{r}^{-1} )_{\sharp} \mathcal{L}^1_{\llcorner [0,1]}$ where $\mathcal{L}^1_{\llcorner [0,1]}$ stands for the restriction of the Lebesgue measure to $[0,1]$, so in particular
\begin{equation}
\label{eq:transport_formula}
\chi_r(m)=
\int_0^1 F_m^{-1}(t)F_r^{-1}(t) \dd t.
\end{equation}
Additionally, it holds that
\begin{equation} \label{eq:duality_ot}
\chi_r(m)
= \inf_{(f,g) \in K} \int_{\Xcal} f(x) \dd m(x) + \int_{\Ycal} g(y) \dd r(y) ,
\end{equation}
where $K \subset \Ccal_p^0(\Xcal) \times \Ccal_q^0(\mathcal{Y})$ is the function set defined by
\begin{equation} \label{eq:def_K}
K \triangleq \bigg\{
(f,g) \in \Ccal_p^0(\Xcal) \times \Ccal_q^0(\Ycal) ~\,\textnormal{s.t.}~ xy \leq f(x) + g(y) ~~ \text{for all $(x,y) \in \Xcal \times \mathcal{Y}$} \bigg\}.
\end{equation}
\end{proposition}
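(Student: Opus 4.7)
The plan is to treat the two distinct assertions of the proposition in turn: first the existence, explicit form, and uniqueness of the optimal plan, and then the duality formula \eqref{eq:duality_ot}. Both parts rely on well-established machinery from one-dimensional optimal transport theory.

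For the first part, I would begin by checking that the candidate plan $\bar{\pi} \triangleq (F_m^{-1}, F_r^{-1})_\sharp \Lcal^1_{\llcorner [0,1]}$ indeed belongs to $\Pi(m,r)$, which is immediate from the classical identity $(F_\mu^{-1})_\sharp \Lcal^1_{\llcorner [0,1]} = \mu$ valid for any $\mu \in \Pcal(\R)$. Formula \eqref{eq:transport_formula} then follows by a direct change of variables, and the integral is finite thanks to H\"older's inequality together with the fact that $F_m^{-1} \in L^p([0,1])$ and $F_r^{-1} \in L^q([0,1])$ -- consequences of $m \in \Pcal_p(\R)$ and $r \in \Pcal_q(\R)$ respectively.

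To establish that $\bar{\pi}$ is the unique maximizer, I would exploit the strict supermodularity of the cost, namely the inequality $x_1 y_1 + x_2 y_2 > x_1 y_2 + x_2 y_1$ whenever $x_1 < x_2$ and $y_1 < y_2$. A standard cyclic monotonicity argument then forces any optimal $\pi^\ast \in \Pi(m,r)$ to be concentrated on a comonotone subset of $\R^2$; since a plan whose support is comonotone is uniquely determined by its marginals, this identifies $\pi^\ast$ with $\bar{\pi}$. Equivalently, this is the classical Hoeffding--Fr\'echet rearrangement inequality, which could also be invoked directly from \cite{OTAM}.

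For the duality formula \eqref{eq:duality_ot}, I would apply the standard Kantorovich duality theorem to the continuous cost $c(x,y) = xy$. The required integrability hypothesis is supplied by Young's inequality $|xy| \leq |x|^p/p + |y|^q/q$ (with the natural variants in the endpoint cases $p \in \{1,+\infty\}$, where $\Ycal$ or $\Xcal$ is assumed bounded), producing a dominating sum of an $m$-integrable and an $r$-integrable function. This yields $\chi_r(m)$ as the infimum of $\int f \dd m + \int g \dd r$ over admissible measurable pairs $(f,g)$. The main obstacle, and the delicate step, is the restriction of this infimum to pairs $(f,g) \in \Ccal_p^0(\Xcal) \times \Ccal_q^0(\Ycal)$. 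I would handle this by exploiting the bilinear structure of $c$: the $c$-transform $g^c(x) = \sup_{y \in \Ycal}(xy - g(y))$ of any admissible $g$ is a convex function of $x$, hence continuous on the interior of its domain, and its $p$-growth at infinity is controlled -- via Young's inequality again -- by the $q$-growth of $g$, placing $g^c$ in $\Ccal_p^0(\Xcal)$, with the analogous symmetric statement for $g$.
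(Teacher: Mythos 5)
The first half of your argument --- existence of an optimizer, the explicit comonotone plan $(F_m^{-1},F_r^{-1})_\sharp\mathcal{L}^1_{\llcorner[0,1]}$, and uniqueness via cyclical monotonicity (equivalently Hoeffding--Fr\'echet) --- is correct and is essentially the paper's own route, which delegates these points to standard references.

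The duality half, however, has a genuine gap at exactly the step you identify as delicate. Your mechanism for placing the $c$-transform $g^c(x)=\sup_{y\in\Ycal}\big(xy-g(y)\big)$ into $\Ccal_p^0(\Xcal)$ is that ``its $p$-growth at infinity is controlled, via Young's inequality, by the $q$-growth of $g$''. This is backwards: Young's inequality gives $xy-g(y)\le |x|^p/p+\big(|y|^q/q-g(y)\big)$, so an upper bound of order $|x|^p$ on $g^c$ requires a \emph{lower} bound $g(y)\ge |y|^q/q-C$. Neither membership of $g$ in $\Ccal_q^0(\Ycal)$ nor admissibility of the measurable Kantorovich potentials you start from provides such a bound; they only control $|g|$ from above by $C(1+|y|^q)$ and allow $g$ to grow strictly slower than $|y|^q$. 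Concretely, for $p=q=2$ and $\Xcal=\Ycal=\R$, the admissible pair $g(y)=|y|^{3/2}$, $f(x)=g^c(x)=\tfrac{4}{27}|x|^3$ has $g\in\Ccal_2^0(\R)$ but $g^c\notin\Ccal_2^0(\R)$, so double $c$-conjugation does not land in $K$. (A related difficulty is that the bounded-continuous form of Kantorovich duality cannot be applied directly to the cost $xy$ when both $\Xcal$ and $\Ycal$ are unbounded, since no bounded admissible pairs exist at all.)

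The paper circumvents both issues with a shift: it fixes the explicit admissible pair $(a,b)=\big(|x|^p/p,\,|y|^q/q\big)\in K$, applies the bounded-continuous Kantorovich duality to the \emph{nonnegative} cost $c(x,y)=a(x)+b(y)-xy$, and then changes variables $(f,g)=(a-\tilde f,\,b-\tilde g)$ with $\tilde f,\tilde g$ bounded. The resulting potentials automatically satisfy $g(y)\ge |y|^q/q-\|\tilde g\|_\infty$, which is precisely the lower bound your conjugation argument needs. Your proof can be repaired by adopting this shift (or by otherwise restricting attention to potentials of the form $|y|^q/q+O(1)$) before taking $c$-transforms; as written, the bridge from the measurable duality to the infimum over $K$ is missing.
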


\begin{proof}
The existence of an optimal transport plan $\bar{\pi} \in\Pi(m,r)$ follows e.g.\@ from \cite[Theorem 4.1]{villani2009optimal}. Furthermore, by \cite[Theorem 5.10-(ii)]{villani2009optimal}, any such optimal plan $\bar{\pi}$ is monotone, which in the present context means that
\begin{equation*}
x_1y_2 + x_2y_1 \leq x_1 y_1  + x_2 y_2
\end{equation*}
for all pairs of elements $(x_1,y_1), (x_2,y_2) \in \textnormal{supp}(\bar{\pi})$, which implies in particular that $y_1 \leq y_2$ whenever $x_1 < x_2$.
Thanks e.g.\@ to \cite[Lemma 2.8]{OTAM}, such a plan is actually unique and given explicitly through the formula $( F_{m}^{-1} , F_{r}^{-1} )_{\sharp} \mathcal{L}^1_{\llcorner [0,1]}$.

To establish the duality formula \eqref{eq:duality_ot}, we first exhibit a pair $(a,b)$ that lies in $K$. To do so, it is enough to consider the two following cases.
\begin{itemize}
\item If $p \in (1,+\infty)$, we set $a(x) \triangleq |x|^p/p$ and $b(y) \triangleq |y|^q/q$ for all $(x,y) \in\Xcal\times\mathcal{Y}$. Then, Young's inequality entails that $xy \leq a(x) + b(y)$, for every $(x,y) \in \mathcal{X} \times \mathcal{Y}$.
\item If $p=1$, then $q = +\infty$ and $\Ycal$ is bounded under our working assumptions. We then set $a(x) \triangleq (\max_{y \in \Ycal} |y|) |x|$ and $b(y)= 0$. Clearly $(a,b)$ lies in $K$. The case $p=+\infty$ can be treated similarly.
\end{itemize}
At this stage, note that for any $(f,g) \in K$, there holds
\begin{equation*}
\chi_r(m)
\leq \sup_{\pi \in \Pi(m,r)}
\int_{\Xcal \times \Ycal}
\Big( f(x) + g(y) \Big) \dd \pi(x,y)
= \int_{\Xcal} f(x) \dd m(x)
+ \int_{\Ycal} g(y) \dd r(y).
\end{equation*}
This implies in particular that
\begin{equation} \label{eq:weak_duality_ot}
\chi_r(m)
\leq \inf_{(f,g) \in K}
\int_{\Xcal} f(x) \dd m(x)
+ \int_{\Ycal} g(y) \dd r(y) .
\end{equation}
Next, we define the cost function $c \colon \Xcal \times \Ycal \rightarrow \R$ by
\begin{equation*}
c(x,y) \triangleq a(x) + b(y) - xy,
\end{equation*}
for all $(x,y) \in \Xcal \times \Ycal$, and observe that it is nonnegative, since $(a,b) \in K$. Moreover, one can check that
\begin{equation} \label{eq:change1}
\chi_r(m)
= \int_{\Xcal} a(x) \dd m(x) +
\int_{\Ycal} b(y) \dd r(y)
- \inf_{\pi \in \Pi(m,r)} \int_{\Xcal \times \Ycal} c(x,y) \dd \pi(x,y), 
\end{equation}
so that upon letting $K_0 \subset \Ccal_b(\Xcal) \times \Ccal_b(\Ycal)$ be the set defined by 
\begin{equation*}
K_0 \triangleq \bigg\{ (\tilde{f},\tilde{g}) \in \Ccal_b(\Xcal) \times \Ccal_b(\Ycal) ~\,\textnormal{s.t.}~
\tilde{f}(x) + \tilde{g}(y) \leq c(x,y) ~~ \text{for all $(x,y) \in \Xcal \times \Ycal$} \bigg\},
\end{equation*}
we deduce from the usual Kantorovich duality theorem, see e.g.\@ \cite[Theorem 5.10]{villani2009optimal}, that
\begin{equation}\label{eq:change2}
\inf_{\pi \in \Pi(m,r)}
\int_{\Xcal \times \Ycal}
c(x,y) d \pi(x,y)
= \sup_{(\tilde{f},\tilde{g}) \in K_0} \int_{\Xcal} \tilde{f}(x) \dd m(x) + \int_{\Ycal} \tilde{g}(y) \dd r(y).
\end{equation}
Then, using the changes of variables $(f,g) = (a- \tilde{f},b- \tilde{g})$ while recalling the definition of the cost function $c : \Xcal \times \Ycal \to \R$, it can be checked that
\begin{equation*}
\begin{aligned}
K_1 & \triangleq \bigg\{
(f,g) \in K ~\,\textnormal{s.t.}~ (a-f) \in \Ccal_b(\Xcal) ~~\text{and}~~ (b-g) \in \Ccal_b(\Ycal) \bigg\} \\
& = \Big\{(a-\tilde{f},b-\tilde{g}) \in \Ccal^0_p(\Xcal) \times \Ccal^0_q(\Ycal) ~\,\textnormal{s.t.}~ (\tilde{f},\tilde{g}) \in K_0 \Big\},
\end{aligned}
\end{equation*}
which combined with \eqref{eq:change1}-\eqref{eq:change2} further yields
\begin{equation*}
\chi_r(m)= \inf_{(f,g) \in K_1} 
\int_{\Xcal} f(x) \dd m(x)
+ \int_{\Ycal} g(y) \dd r(y).
\end{equation*}
Finally, by resorting to \eqref{eq:weak_duality_ot} while noticing that $K_1 \subset K$, we finally get that
\begin{equation*}
\begin{aligned}
\chi_r(m) & \leq \;
\inf_{(f,g) \in K} 
\int_{\Xcal} f(x) \dd m(x) + \int_{\Ycal} g(y) \dd r(y)
\\
& \leq
\inf_{(f,g) \in K_1}
\int_{\Xcal} f(x) \dd m(x) + \int_{\Ycal} g(y) \dd r(y)
= \chi_r(m), 
\end{aligned}
\end{equation*}
which entails the duality formula \eqref{eq:duality_ot}.
\end{proof}


\section{Transport-based risk measures}
\label{section:ot-rm}

This section focuses on those risk measures $\rho_R : \Lbb^p(\Omega,\R) \to\R)$ associated with the value function $\chi_R : \Pcal_p(\R) \to \R$ of our generalized optimal transport problem \eqref{eq:gal_ot}, defined as 
\begin{equation}
\label{eq:def_rhoR}
\rho_R(X) := \chi_R(\mathbb{P}_X).
\end{equation}
In Theorem \ref{thm:kusuoka_converse} below, we show that $\rho_R$ is a licorm under Assumptions \ref{hyp:OT}-\textnormal{(i)} and \textnormal{(ii)}. In Lemma \ref{lemma:lip_chiR} and Corollary \ref{coro:lip_rhoR}, we investigate the Lipschitz continuity of $\chi_R$ and $\rho_R$.

\begin{theorem}[Licorms induced by optimal transport problems]
\label{thm:kusuoka_converse}
Suppose that $R \subset \mathcal{P}_q(\R)$ satisfies Assumptions \ref{hyp:OT}-\textnormal{(i)} and \textnormal{(ii)}.
Then, the mapping $\rho_R \colon \mathbb{L}^p(\Omega,\R) \to \R$ is a \licorm{}.
\end{theorem}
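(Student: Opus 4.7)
The plan is to verify each of the five defining axioms of a \licorm{} directly at the level of the functional $\chi_R$, and then transfer them to $\rho_R$ through the identity $\rho_R(X) = \chi_R(\mathbb{P}_X)$. Law-invariance is immediate from this definition, since $\rho_R(X)$ depends on $X$ only through $\mathbb{P}_X$, while finiteness of $\rho_R$ on $\Lbb^p(\Omega,\R)$ follows from Lemma \ref{lemma:bound_transport} together with Assumption \ref{hyp:OT}-(i).

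Translation invariance and positive homogeneity will be handled by exploiting elementary pushforward bijections at the level of transport plans. Fixing $r \in R$ and $\alpha \in \R$, the map $T_\alpha \colon (x,y) \mapsto (x+\alpha,y)$ induces a bijection $\pi \mapsto (T_\alpha)_\sharp \pi$ between $\Pi(\mathbb{P}_X,r)$ and $\Pi(\mathbb{P}_{X+\alpha},r)$. A direct change of variables, crucially using the identity $\mathbb{E}[r]=1$ from Assumption \ref{hyp:OT}-(ii), then yields
$$\int_{\R^2} xy\, \dd [(T_\alpha)_\sharp \pi] = \int_{\R^2} (x+\alpha)y\, \dd\pi = \int_{\R^2} xy\, \dd\pi + \alpha.$$
Passing to the supremum first in $\pi$ and then in $r \in R$ produces $\rho_R(X+\alpha) = \rho_R(X) + \alpha$, and homogeneity is obtained analogously through the scaling map $(x,y) \mapsto (\delta x,y)$, which does not even require the unit-mean assumption on $r$. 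For monotonicity, I would appeal to the closed-form expression \eqref{eq:transport_formula} of Proposition \ref{proposition:ot}: the standard fact that $X_1 \leq X_2$ almost surely implies $F_{X_1}^{-1} \leq F_{X_2}^{-1}$ pointwise on $(0,1)$, combined with the sign condition $F_r^{-1} \geq 0$ (a consequence of $\mathcal{Y}_R \subset \R_+$ from Assumption \ref{hyp:OT}-(ii)), leads via
$$\chi_r(\mathbb{P}_{X_i}) = \int_0^1 F_{X_i}^{-1}(t) F_r^{-1}(t)\, \dd t$$
to the pointwise inequality $\chi_r(\mathbb{P}_{X_1}) \leq \chi_r(\mathbb{P}_{X_2})$, whence $\rho_R(X_1) \leq \rho_R(X_2)$ after taking the supremum over $r \in R$.

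The main obstacle is convexity, which cannot be reduced to marginal manipulations alone since the law $\mathbb{P}_{(1-\theta)X_1 + \theta X_2}$ is not determined by $\mathbb{P}_{X_1}$ and $\mathbb{P}_{X_2}$. My approach is to invoke the gluing lemma. Fix $\theta \in [0,1]$, $r \in R$, set $X \triangleq (1-\theta)X_1 + \theta X_2$, and let $\bar\pi \in \Pi(\mathbb{P}_X,r)$ denote the optimal plan supplied by Proposition \ref{proposition:ot}. Let also $\mu \in \Pcal(\R^3)$ be the joint law of $(X_1,X_2,X)$, which is concentrated on the set $\{(x_1,x_2,x) : x = (1-\theta)x_1 + \theta x_2\}$ and whose $x$-marginal is $\mathbb{P}_X$. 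Applying the gluing lemma along this common $x$-marginal (see e.g.\ \cite[Lemma 5.3.2]{AGS}) produces $\gamma \in \Pcal(\R^4)$ whose $(x_1,x_2,x)$-marginal equals $\mu$ and whose $(x,y)$-marginal equals $\bar\pi$. Using that $x = (1-\theta)x_1 + \theta x_2$ holds $\gamma$-almost everywhere, one may then split
$$\chi_r(\mathbb{P}_X) = \int_{\R^4} xy\, \dd\gamma = (1-\theta)\int_{\R^4} x_1 y\, \dd\gamma + \theta \int_{\R^4} x_2 y\, \dd\gamma \leq (1-\theta)\chi_r(\mathbb{P}_{X_1}) + \theta \chi_r(\mathbb{P}_{X_2}),$$
where the last inequality uses that each $(x_i,y)$-marginal of $\gamma$ defines a plan in $\Pi(\mathbb{P}_{X_i},r)$. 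A final supremum over $r \in R$ on both sides delivers convexity and completes the verification of all five axioms.
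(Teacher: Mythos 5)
Your proposal is correct, and for four of the five axioms it coincides with the paper's argument: law-invariance is read off the definition, translation invariance and homogeneity are obtained by pushing plans forward under $(x,y)\mapsto(x+\alpha,y)$ and $(x,y)\mapsto(\delta x,y)$ (your observation that these maps are bijections between the relevant sets of plans gives the equality in one stroke, where the paper proves two inequalities; for $\delta=0$ one should still note separately that $\rho_R(0)=0$), and your convexity argument is exactly the paper's gluing construction, joining the law of $(X_1,X_2,(1-\theta)X_1+\theta X_2)$ to a plan in $\Pi(\mathbb{P}_{X_\theta},r)$ along the common marginal. The genuine divergence is in monotonicity. The paper stays at the level of transport plans and uses a second application of the gluing lemma: it glues an arbitrary $\pi_1\in\Pi(\mathbb{P}_{X_1},R)$ to the joint law $\mathbb{P}_{(X_1,X_2)}$, producing a competitor in $\Pi(\mathbb{P}_{X_2},R)$, and concludes from $x_1\le x_2$ and $y\ge 0$ on the support of the glued measure. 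You instead invoke the one-dimensional comonotone representation $\chi_r(m)=\int_0^1 F_m^{-1}(t)F_r^{-1}(t)\dd t$ of Proposition \ref{proposition:ot}, the pointwise inequality $F_{X_1}^{-1}\le F_{X_2}^{-1}$ implied by $X_1\le X_2$ a.s., and the sign condition $F_r^{-1}\ge 0$ coming from $\mathcal{Y}_R\subset\R_+$. Both routes are valid under Assumption \ref{hyp:OT}-(i) (which, when $q=+\infty$, supplies the bounded support of each $r$ required by Proposition \ref{proposition:ot}). Your version is shorter and makes transparent exactly where the sign hypothesis on $\mathcal{Y}_R$ enters, but it leans on the explicit quantile coupling and hence on the one-dimensional structure; the paper's gluing argument is more elementary in the sense that it uses no structure of the optimal plan and would survive in higher-dimensional or more abstract settings.
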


\begin{proof}
To show that the generalized optimal transport problem \eqref{eq:gal_ot} induces a licorm via \eqref{eq:def_rhoR}, one simply has to verify that the corresponding axioms listed in Definition \ref{def:coherence} above are satisfied. Note that the law-invariance property is trivially satisfied by construction.

\paragraph*{Translation invariance}

Let $X \in \Lbb^p(\Omega,\R)$ and fix an arbitrary $\pi \in \Pi(\P_X,R)$. Given $\alpha \in \R$, consider the map $\kappa_{\alpha}(x,y) \triangleq (x+\alpha,y)$, and observe that $\kappa^{\alpha}_{\sharp} \pi \in \Pi(\mathbb{P}_{X+\alpha},R)$. Thus
\begin{equation*}
\rho_R(X+ \alpha)
\geq \int_{\R^2} xy \dd (\kappa^{\alpha}_{\sharp} \pi) (x,y)
= \alpha + \int_{\R^2} xy \dd \pi(x,y),
\end{equation*}
where we used the fact that $\int_{\R^2} y \dd \pi(x,y) = 1$ as a direct consequence of Assumption \ref{hyp:OT}-(ii). Taking the supremum with respect to $\pi \in\Pi(m,R)$, we deduce that 
\begin{equation*}
\rho_R(X+\alpha) \geq \alpha + \rho_R(X).    
\end{equation*}
At this stage, changing $X$ for $X+\alpha$ while redefining $\kappa_{\alpha}(x,y) \triangleq (x-\alpha,y)$, we infer that $\rho_R(X) \geq \rho(X+\alpha)-\alpha$, which proves the desired property.

\paragraph*{Homogeneity}

Let $X \in \Lbb^p(\Omega,\R)$ and fix an arbitrary $\pi \in \Pi(\P_X,R)$. Given $\delta >0$, consider the map $\kappa^{\delta}(x,y) \triangleq (\delta x,y)$ and observe that $\kappa^{\delta}_{\sharp} \pi \in \Pi(\mathbb{P}_{\delta X}, R)$, whence 
\begin{equation*}
\rho_R(\delta X)
\geq \int_{\R^2} xy \dd (\kappa^{\delta}_{\sharp} \pi)(x,y)
= \delta \int_{\R^2} xy \dd \pi(x,y).
\end{equation*}
By taking the supremum with respect to  $\pi \in\Pi(\P_X,r)$, we deduce that 
\begin{equation*}
\rho_R(\delta X) \geq \delta \rho_R(X)    
\end{equation*}
Replacing now $X$ by $\delta X$ and considering instead $\kappa^{\delta}(x,y) \triangleq (x/\delta,y)$, we further obtain 
\begin{equation*}
\rho_R(X) \geq \tfrac{1}{\delta} \rho_R(\delta X), 
\end{equation*}
wherefore $\rho(\delta X) = \delta \rho_R(X)$ for each $\delta > 0$. This closes the proof upon noting that the latter equality obviously holds for $\delta=0$, since $\rho_R(0)=0$ by construction.

\paragraph*{Monotonicity} Take $X_1,X_2 \in \Lbb^p(\Omega,\R)$ such that $X_1 \leq X_2$ almost surely. Then, for every $\pi_1 \in \Pi(\P_{X_1},r)$, there exists by virtue of the gluing lemma (see e.g.\@ \cite[Lemma 5.3.2]{AGS}) a probability measure $\Bpi \in \Pcal(\R^3)$ such that 
\begin{equation*}
\efrak^{1,2}_{\sharp} \Bpi = \P_{(X_1,X_2)} \qquad \text{and} \qquad \efrak^{1,3}_{\sharp} \Bpi = \pi_1
\end{equation*}
Upon observing that $\pi_{2,3} \triangleq e^{2,3}_{\sharp} \Bpi \in \Pi(\P_{X_2},R)$, it follows that
\begin{equation*}
\rho_R(X_2)
\geq \int_{\R^2} x_2y \dd \pi_{2,3}(x_2,y)
= \int_{\R^3} x_2 y \dd \Bpi(x_1,x_2,y),
\end{equation*}
and since it both holds that $x_1 \leq x_2$ and $y\geq 0$ for $\Bpi$-almost every $(x_1,x_2,y) \in \R^3$, this implies
\begin{equation*}
\rho_R(X_2)
\geq  \int_{\R^3} x_1 y \dd \Bpi(x_1,x_2,y)
= \int_{\R^2} x_1y \dd \pi(x_1,y).
\end{equation*}
Since $\pi \in \Pi(\P_{X_1},R)$  was chosen arbitrarily, we conclude that $\rho_R(X_2) \geq \rho_R(X_1)$.

\paragraph*{Convexity} Let $X_1,X_2 \in \Lbb^p(\Omega,\R)$, choose $\theta \in [0,1]$ and set $X_{\theta} \triangleq (1-\theta) X_1 + \theta X_2$. Then, consider the map
\begin{equation*}
\kappa^{\theta}(x_1,x_2) \triangleq \big(x_1,x_2,(1-\theta)x_1 + \theta x_2 \big),
\end{equation*}
and define the measure $m_{\theta} \triangleq \kappa^{\theta}_{\sharp}\P_{(X_1,X_2)} \in\Pcal_p(\R^3)$. Next, take any $\pi_{\theta} \in \Pi(\P_{X_{\theta}},R)$ and apply the gluing lemma to build a measure $\Bpi_{\theta} \in\Pcal(\R^4)$ such that 
\begin{equation*}
\efrak^{1,2,3}_{\sharp} \Bpi_{\theta} = m_{\theta} \qquad \text{and} \qquad \efrak^{3,4}_{\sharp} \Bpi_{\theta} = \pi_{\theta},
\end{equation*}
where $\efrak^{1,2,3}(x,y,z,t) \triangleq (x,y,z)$ and $\efrak^{3,4}(x,y,z,t) = (z,t)$, and observe that
\begin{equation*}
\int_{\R^2} x_{\theta} y \dd \pi_{\theta}(x_{\theta},y)
= \int_{\R^4} x_{\theta} y \dd \Bpi_{\theta}(x_1,x_2,x_{\theta},y).
\end{equation*}
Recalling that $x_{\theta}= (1-\theta)x_1 + \theta x_2$ for $\Bpi_{\theta}$-almost every $(x_1,x_2,x_{\theta},y)$, we infer that
\begin{equation*}
\begin{aligned}
\int_{\R^2} x_{\theta} y \dd \pi_{\theta}(x_{\theta},y)
& = \int_{\R^4} \big( (1-\theta)x_1 + \theta x_2 \big) y \dd \Bpi_{\theta}(x_1,x_2,x_{\theta},y) \\
& \leq (1- \theta) \rho_R(X_1) + \theta \rho_R(X_2), 
\end{aligned}
\end{equation*}
and since $\pi_{\theta} \in\Pi(m_{\theta},R)$ was chosen arbitrarily, this yields by passing to the supremum that $\rho_R((1-\theta)X_1 + \theta X_2)  \leq (1-\theta) \rho_R(X_1) + \theta \rho_R(X_2)$, thereby closing the proof. 
\end{proof}

\begin{remark}[On our set of working assumptions]
\label{rem:convexity_rhoR}
Note that the convexity of $\rho_R \colon \Lbb^p(\Omega,\R) \to \R$ does not rely on Assumption \ref{hyp:OT}-\textnormal{(ii)}. The latter hypothesis is only needed to ensure that the risk measure is coherent. 
\end{remark}

In what follows, we prove basic regularity estimates on the value function of the generalized optimal transport problem and the induced licorm. 

\begin{lemma}[Lipschitz continuity of $\chi_R$]
\label{lemma:lip_chiR}
Suppose that $R \subset \mathcal{P}_q(\R)$ satisfies Assumption \ref{hyp:OT}-\textnormal{(i)}, and define
\begin{equation*}
L_R \triangleq
\sup_{r \in R} \, \Mcal_q(r).
\end{equation*}
Then $\chi_R : \Pcal_p(\R) \to \R$ is $L_R$-Lipschitz continuous for the Wasserstein distance, namely
\begin{equation*}
| \chi_R(m_2)-\chi_R(m_1) |
\leq L_R W_p(m_1,m_2),
\end{equation*}
for every $m_,m_2 \in \mathcal{P}_p(\R)$.
\end{lemma}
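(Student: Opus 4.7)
The plan is to combine the one-dimensional representation of $\chi_r$ from Proposition \ref{proposition:ot} with Hölder's inequality, together with the classical isometric formula expressing $W_p$ on $\R$ as the $L^p$-distance between quantile functions. Specifically, for a fixed $r \in R$, the formula \eqref{eq:transport_formula} yields
\begin{equation*}
\chi_r(m_2) - \chi_r(m_1) = \int_0^1 \Big( F_{m_2}^{-1}(t) - F_{m_1}^{-1}(t) \Big) F_r^{-1}(t) \dd t,
\end{equation*}
so that by Hölder's inequality applied with conjugate exponents $p$ and $q$,
\begin{equation*}
\big| \chi_r(m_2) - \chi_r(m_1) \big|
\leq
\bigg( \int_0^1 \big| F_{m_2}^{-1}(t) - F_{m_1}^{-1}(t) \big|^p \dd t \bigg)^{1/p}
\bigg( \int_0^1 \big| F_r^{-1}(t) \big|^q \dd t \bigg)^{1/q}.
\end{equation*}

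I would then identify each of the two factors. The second factor equals $\Mcal_q(r)$, thanks to the standard change of variables $\int_0^1 |F_r^{-1}(t)|^q \dd t = \int_{\R} |y|^q \dd r(y)$ (with the obvious essential-supremum interpretation when $q= +\infty$ and $p=1$). The first factor is precisely $W_p(m_1,m_2)$, by the well-known one-dimensional representation of the Wasserstein distance on $\R$ via quantile functions, see e.g.\@ \cite[Proposition 2.17]{OTAM}. Combining these two identifications gives
\begin{equation*}
\big| \chi_r(m_2) - \chi_r(m_1) \big|
\leq \Mcal_q(r) \, W_p(m_1,m_2)
\leq L_R \, W_p(m_1,m_2).
\end{equation*}

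The conclusion then follows by invoking the identity $\chi_R(m) = \sup_{r \in R} \chi_r(m)$ already observed after \eqref{eq:gal_ot}, together with the elementary inequality $|\sup_{r \in R} a_r - \sup_{r \in R} b_r | \leq \sup_{r \in R} |a_r - b_r|$ valid for any uniformly bounded families of real numbers (uniform boundedness in $r$ being guaranteed here by Lemma \ref{lemma:bound_transport} and Assumption \ref{hyp:OT}-(i)). I do not anticipate any genuine obstacle: the only point deserving minor care is the handling of the endpoint cases $p \in \{1,+\infty\}$ in Hölder's inequality and in the quantile representation of $W_p$, but both are standard, the latter case $p=+\infty$ being treated for instance in \cite[Section 3.2]{OTAM}.
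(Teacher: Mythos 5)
Your proof is correct, but it follows a genuinely different route from the paper's. The paper does not invoke the quantile representation \eqref{eq:transport_formula} at all for this lemma: it fixes an arbitrary $\pi_{1,2} \in \Pi(m_1,m_2)$ and an $\varepsilon$-optimal $\pi_2 \in \Pi(m_2,R)$, glues them along the common marginal $m_2$ via the gluing lemma of \cite{AGS} to produce a three-marginal measure $\Bpi$, observes that $\efrak^{1,3}_{\sharp}\Bpi$ is admissible for $\chi_R(m_1)$, and then applies H\"older's inequality to $\int |x_2-x_1|\,|y| \dd \Bpi$ before taking the infimum over $\pi_{1,2}$. Your argument instead exploits the one-dimensional structure twice: once through the explicit monotone-rearrangement formula \eqref{eq:transport_formula} for $\chi_r$, and once through the quantile-function isometry for $W_p$ on $\R$, reducing everything to H\"older on $[0,1]$ and the elementary inequality $|\sup_r a_r - \sup_r b_r| \leq \sup_r |a_r-b_r|$ (whose applicability is indeed secured by Lemma \ref{lemma:bound_transport}). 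Both yield the same constant $L_R$, and your version has the merit of being fully explicit, of avoiding the $\varepsilon$-approximation and the gluing construction, and of establishing the slightly finer statement that each $\chi_r$ is $\Mcal_q(r)$-Lipschitz. The trade-off is that your argument is intrinsically one-dimensional, whereas the paper's gluing argument would survive essentially unchanged for measures on $\R^d$ with cost $\langle x,y\rangle$, which is why the paper favours it. The endpoint cases $p\in\{1,+\infty\}$ are handled as you indicate, noting that Proposition \ref{proposition:ot} requires $\supp(r)$ bounded when $q=+\infty$, which is automatic since $\Pcal_\infty(\R)=\Pcal_c(\R)$ and $R$ is bounded by Assumption \ref{hyp:OT}-(i).
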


\begin{proof}
Fix some $\varepsilon>0$, take any $\pi_{1,2} \in \Pi(m_1,m_2)$ and let $\pi_2 \in \Pi(m_2,R)$ be such that
\begin{equation*}
\chi_R(m_2) \leq \int_{\R^2} xy \dd \pi_2(x,y) + \varepsilon.
\end{equation*}
Then, by the gluing lemma, there exists a measure $\Bpi \in \mathcal{P}(\R^3)$ such that 
\begin{equation*}
\efrak^{1,2}_{\sharp} \Bpi = \pi_{1,2} \qquad \text{and} \qquad \efrak^{2,3}_{\sharp}\Bpi = \pi_2
\end{equation*}
Since by construction it clearly holds that $\mathfrak{e}^{1,3}_{\sharp} \Bpi \in \Pi(m_1,R)$, we may compute
\begin{align*}
|\chi_R(m_2) - \chi_R(m_1)| &\le \bigg| \int_{\R^2} x_2 y \; \mathrm{d} \pi_2(x_2,y) - \int_{\R^3} x_1 y \; \mathrm{d} (\mathfrak{e}^{1,3}_{\sharp} \Bpi)(x_1,y)\bigg| + \varepsilon \\
& \leq \int_{\R^3} |x_2 - x_1| |y| \; \mathrm{d} \Bpi(x_1,x_2,y) + \varepsilon \\
& \leq \Mcal_q(\efrak^3_{\sharp} \Bpi) \bigg( \INTDom{|x_1-x_2|^p}{\R^3}{\Bpi(x_1,x_2,y)} \bigg)^{1/p} + \varepsilon \\
& \leq L_R \bigg( \INTDom{|x_2-x_1|^p}{\R^2}{\pi_{1,2}(x_1,x_2,y)} \bigg)^{1/p} + \varepsilon
\end{align*}
and we conclude from the arbitrariness of $\pi_{1,2} \in \Pi(m_1,m_2)$ and $\varepsilon > 0$.
\end{proof}

\begin{corollary}[Lipschitz continuity of $\rho_R$]
\label{coro:lip_rhoR}
Suppose that $R \subset \mathcal{P}_q(\R)$ satisfies Assumption \ref{hyp:OT}-\textnormal{(i)}. Then $\rho_R : \Lbb^p(\Omega,\R) \to \R$ is $L_R$-Lipschitz continuous.
\end{corollary}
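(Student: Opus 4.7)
The plan is to deduce the corollary directly from Lemma \ref{lemma:lip_chiR} by lifting the Lipschitz estimate on $\chi_R$ to a Lipschitz estimate on $\rho_R$ via the obvious identity $\rho_R(X) = \chi_R(\mathbb{P}_X)$. The only substantive point to verify is the contraction-type bound
\begin{equation*}
W_p(\mathbb{P}_{X_1},\mathbb{P}_{X_2}) \leq \| X_1 - X_2 \|_p,
\end{equation*}
which is valid for all $X_1,X_2 \in \Lbb^p(\Omega,\R)$ (and also for $p=+\infty$ with the appropriate reading of $W_\infty$ and the essential supremum norm).

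To establish this bound, I would consider the pushforward measure $\mathbb{P}_{(X_1,X_2)} \in \Pcal(\R^2)$ of $\P$ by the random vector $(X_1,X_2) \colon \Omega \to \R^2$, and observe that it belongs to $\Pi(\mathbb{P}_{X_1},\mathbb{P}_{X_2})$ since its marginals are, by definition, $\mathbb{P}_{X_1}$ and $\mathbb{P}_{X_2}$. For $p \in [1,+\infty)$, the change of variables formula then yields
\begin{equation*}
W_p^p(\mathbb{P}_{X_1},\mathbb{P}_{X_2}) \leq \int_{\R^2} |x_1-x_2|^p \dd \mathbb{P}_{(X_1,X_2)}(x_1,x_2) = \mathbb{E}\big[ |X_1-X_2|^p \big] = \| X_1 - X_2 \|_p^p,
\end{equation*}
while for $p = +\infty$ one obtains analogously that $W_\infty(\mathbb{P}_{X_1},\mathbb{P}_{X_2})$ is bounded above by the essential supremum of $|X_1-X_2|$.

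Combining this inequality with Lemma \ref{lemma:lip_chiR} then immediately gives
\begin{equation*}
|\rho_R(X_1)-\rho_R(X_2)| = |\chi_R(\mathbb{P}_{X_1})-\chi_R(\mathbb{P}_{X_2})| \leq L_R \, W_p(\mathbb{P}_{X_1},\mathbb{P}_{X_2}) \leq L_R \, \| X_1 - X_2 \|_p,
\end{equation*}
which is the desired conclusion. There is no real obstacle here: the argument is essentially a one-line consequence of Lemma \ref{lemma:lip_chiR} together with the standard fact that the joint law of any two random variables is a coupling of their marginals.
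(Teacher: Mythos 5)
Your argument is correct and is essentially identical to the paper's own proof: both bound $|\rho_R(X_1)-\rho_R(X_2)|$ via Lemma \ref{lemma:lip_chiR} and then control $W_p(\mathbb{P}_{X_1},\mathbb{P}_{X_2})$ by $\|X_1-X_2\|_p$ using the joint law $\mathbb{P}_{(X_1,X_2)}$ as an admissible coupling. Your write-up is in fact slightly more careful than the paper's, which contains a small typographical slip in the exponent of the coupling integral.
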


\begin{proof}
Given $X_1,X_2 \in \mathbb{L}^p(\Omega,\R)$, recall that the joint distribution of the couple $(X_1,X_2)$ is given by $\P_{(X_1,X_2)} := (X_1,X_2)_{\sharp} \P$. It follows then from Lemma \ref{lemma:lip_chiR} that
\begin{align*}
|\rho_R(X_2)-\rho_R(X_1)|
\leq {} & L_R W_p(\mathbb{P}_{X_1},\mathbb{P}_{X_2}) \\
\leq {} & L_R \bigg( \INTDom{|x_1-x_2|^2}{\R^2}{\P_{(X_1,X_2)}(x,y)} \bigg) \\
= {} & L_R \| X_2 - X_1 \|_{\mathbb{L}^p(\Omega,\R)},
\end{align*}
as was to be proved.
\end{proof}

We close this section by showing that aversity to risk is elementarily built in licorms.  

\begin{lemma}[Aversity to risk of licorms]
\label{lemma:expect_is_lower_bound}
Suppose that $R \subset \mathcal{P}_q(\R)$ satisfies Assumptions \ref{hyp:OT}-\textnormal{(i)} and \textnormal{(ii)}.
Then it holds that
\begin{equation}
\label{eq:expect_is_lower_bound}
\rho_R(X) \geq \mathbb{E}[X],
\end{equation}
for every $X \in \mathbb{L}^p(\Omega,\R)$.
\end{lemma}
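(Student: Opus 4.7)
The plan is to exhibit a single admissible transport plan in $\Pi(\mathbb{P}_X,R)$ whose cost is exactly $\mathbb{E}[X]$, and then conclude via the supremum defining $\chi_R$. The natural candidate is the product (a.k.a.\@ independent) plan, since Assumption \ref{hyp:OT}-(ii) precisely forces the cost of such a plan to disentangle into $\mathbb{E}[X]$ times the first moment of the second marginal, which equals $1$.

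Concretely, I would fix an arbitrary $r \in R$ (the set is nonempty, otherwise $\chi_R$ would not be real-valued) and define the product measure $\pi \triangleq \mathbb{P}_X \otimes r \in \Pcal(\R^2)$. By construction, $\efrak^1_\sharp \pi = \mathbb{P}_X$ and $\efrak^2_\sharp \pi = r \in R$, so that $\pi \in \Pi(\mathbb{P}_X,R)$. The integrability of $(x,y) \mapsto xy$ against $\pi$ is automatic from Lemma \ref{lemma:bound_transport}, which also legitimizes the use of Fubini's theorem. Using the product structure of $\pi$ together with $\E[r] = 1$ from Assumption \ref{hyp:OT}-(ii), I would then compute
\begin{equation*}
\int_{\R^2} xy \dd \pi(x,y)
= \bigg( \int_\R x \dd \mathbb{P}_X(x) \bigg) \bigg( \int_\R y \dd r(y) \bigg)
= \mathbb{E}[X] \cdot \mathbb{E}[r]
= \mathbb{E}[X].
\end{equation*}

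Passing to the supremum over $\Pi(\mathbb{P}_X,R)$ in the definition of $\chi_R(\mathbb{P}_X) = \rho_R(X)$ then immediately yields \eqref{eq:expect_is_lower_bound}. There is no real obstacle: the argument is a one-liner once one notices that the product coupling is admissible and that Assumption \ref{hyp:OT}-(ii) is tailored exactly to make its transport cost coincide with $\mathbb{E}[X]$. The nonnegativity assumption on $\Ycal_R$ is not even used here, only the unit-expectation condition.
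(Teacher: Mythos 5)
Your proof is correct and is essentially identical to the paper's own argument: both take the product coupling $\P_X \times r$ for an arbitrary $r \in R$, check its admissibility, and use the unit-expectation condition to identify its cost with $\E[X]$. Your additional observations (integrability via Lemma \ref{lemma:bound_transport}, and the fact that only the unit-expectation part of Assumption \ref{hyp:OT}-(ii) is used) are accurate.
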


\begin{proof}
Let $X \in \mathbb{L}^p(\Omega,\R)$ and $r \in R$. Setting $m \triangleq \mathbb{P}_X$ and $\pi \triangleq m \times r$, it clearly holds that $\pi \in \Pi(m,R)$, so that 
\begin{equation*}
\rho_R(X)
= \chi_R(m)
\geq \int_{\R^2} xy \dd \pi(x,y)
= \bigg( \int_{\R} x \dd m(x) \bigg)
\bigg( \int_{\R} y \dd r(y) \bigg)
= \mathbb{E}[X],
\end{equation*}
as was to be proved.
\end{proof}


\section{Optimal transport characterization of \licorm{}s}
\label{section:characterization}

In this section, we prove a partial yet sharp converse to Theorem \ref{thm:kusuoka_converse}. Specifically, we show that any \licorm{} defined on a nonatomic probability space can be written in the form introduced in \eqref{eq:def_rhoR}. Note that when $p=+\infty$, we must assume that the risk measure also satisfies the so-called \textit{Fatou property}, whose definition is borrowed from \cite{delbaen2002coherent} and recalled below.

\begin{definition}[Fatou property]
A risk measure $\rho \colon \mathbb{L}^\infty(\Omega,\R) \rightarrow \R$ is said to satisfy the \emph{Fatou property} if for any bounded sequence of random variables $(X_n)_{n \in \mathbb{N}} \subset\mathbb{L}^\infty(\Omega,\R)$ converging in probability to $X \in \mathbb{L}^\infty(\Omega,\R)$, it holds that $\rho(X) \leq \underset{n \to +\infty}{\liminf} \, \rho(X_n)$.
\end{definition}

\begin{remark}[A simple criterion entailing the Fatou property] 
In \cite[Theorem 2.1]{jouini2006law}, it is shown that every licorm $\rho: \Lbb^{\infty}(\Omega,\R) \to \R$ satisfies the Fatou property as soon as $(\Omega,\Acal,\P)$ is a standard probability space.
\end{remark}

We are now ready to state and prove the main result of this section. 

\begin{theorem}[Optimal transport characterization of licorms]
\label{thm:kusuoka}
Suppose that the probability space $(\Omega,\mathcal{A},\mathbb{P})$ is nonatomic and let $\rho \colon \Lbb^p(\Omega,\R) \to \R$ be a \licorm{}, which we assume satisfies the Fatou property if $p=+\infty$.
Then there exists a subset $R \subset\mathcal{P}_q(\R)$ satisfying Assumptions \ref{hyp:OT}-\textnormal{(i)} and \textnormal{(ii)} such that
\begin{equation*}
\rho(X)= \chi_R(\mathbb{P}_X)
\end{equation*}
for each $X \in\Lbb^p(\Omega,\R)$.
\end{theorem}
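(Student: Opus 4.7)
The plan is to leverage the standard dual representation \eqref{eq:dual_repbis} of coherent risk measures in order to extract a candidate set $R$, and then to exploit the nonatomicity of $(\Omega,\mathcal{A},\mathbb{P})$ together with the law-invariance of $\rho$ to match $\rho(X)$ with $\chi_R(\mathbb{P}_X)$. Specifically, I would start by invoking \eqref{eq:dual_repbis} to get a convex set $\mathfrak{Y}\subset\mathbb{L}^q(\Omega,\mathbb{R}_+)$ of nonnegative random variables with $\mathbb{E}[Y]=1$ such that $\rho(X)=\sup_{Y\in\mathfrak{Y}}\mathbb{E}[XY]$ (for the case $p=+\infty$, the Fatou property together with \cite{delbaen2002coherent} ensures that $\mathfrak{Y}$ can be taken inside $\mathbb{L}^1$ rather than in a larger space of finitely additive measures). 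I would then set
\begin{equation*}
R \triangleq \big\{ \mathbb{P}_Y ~\,\textnormal{s.t.}~ Y\in\mathfrak{Y}\big\} \subset \mathcal{P}_q(\mathbb{R}).
\end{equation*}

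Verifying Assumption \ref{hyp:OT}-(ii) is immediate since every $Y\in\mathfrak{Y}$ is nonnegative and has unit expectation. For Assumption \ref{hyp:OT}-(i), boundedness of $R$ in $\mathcal{P}_q(\mathbb{R})$ amounts to $\sup_{Y\in\mathfrak{Y}}\|Y\|_q<+\infty$, which follows from $\rho$ being finite-valued on $\mathbb{L}^p(\Omega,\mathbb{R})$ by a standard Hahn--Banach/uniform boundedness argument: if $\mathfrak{Y}$ were unbounded in $\mathbb{L}^q$, one could produce $X\in\mathbb{L}^p$ with $\rho(X)=+\infty$. The first inequality $\rho(X)\le\chi_R(\mathbb{P}_X)$ is then essentially free: for any $Y\in\mathfrak{Y}$, the joint law $\mathbb{P}_{(X,Y)}$ belongs to $\Pi(\mathbb{P}_X,R)$, so $\mathbb{E}[XY]=\int_{\mathbb{R}^2}xy\,\mathrm{d}\mathbb{P}_{(X,Y)}\le\chi_R(\mathbb{P}_X)$, and passing to the supremum over $Y$ yields the bound.

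The core of the argument lies in the reverse inequality $\rho(X)\ge\chi_R(\mathbb{P}_X)$. For each fixed $Y\in\mathfrak{Y}$ with $r\triangleq\mathbb{P}_Y$, I would use the nonatomicity of $(\Omega,\mathcal{A},\mathbb{P})$ to produce a random variable $V$ uniformly distributed on $[0,1]$ satisfying $Y=F_Y^{-1}(V)$ almost surely (this classically requires an independent auxiliary uniform variable to break ties at atoms of $Y$, which exists thanks to the nonatomic structure). Setting $X'\triangleq F_X^{-1}(V)$ then gives $\mathbb{P}_{X'}=\mathbb{P}_X$ and $\mathbb{P}_{(X',Y)}=(F_X^{-1},F_Y^{-1})_\sharp\mathcal{L}^1_{\llcorner[0,1]}$, which is precisely the optimal comonotone coupling identified in Proposition \ref{proposition:ot}. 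Consequently,
\begin{equation*}
\mathbb{E}[X'Y]=\int_0^1 F_X^{-1}(t)F_Y^{-1}(t)\,\mathrm{d}t=\chi_r(\mathbb{P}_X).
\end{equation*}
Invoking law-invariance of $\rho$ and then its dual representation \eqref{eq:dual_repbis}, I would conclude
\begin{equation*}
\rho(X)=\rho(X')\ge\mathbb{E}[X'Y]=\chi_r(\mathbb{P}_X),
\end{equation*}
and the supremum over $r\in R$ closes the chain.

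The hardest step is this comonotone rearrangement: one needs to show that on a nonatomic probability space one can realize, as the joint distribution of $(X',Y)$ with $Y$ fixed, the monotone plan between $\mathbb{P}_X$ and $\mathbb{P}_Y$, all while preserving $\mathbb{P}_{X'}=\mathbb{P}_X$. Equivalently, one must construct a uniform variable $V$ that \emph{reconstructs} $Y$ via $F_Y^{-1}(V)$, which is non-trivial when $Y$ has atoms and genuinely relies on the nonatomicity of $(\Omega,\mathcal{A},\mathbb{P})$. A secondary subtle point concerns the case $p=+\infty$: there, coherence alone yields only a dual representation in the (overly large) bidual of $\mathbb{L}^\infty$, so the Fatou assumption is crucial to obtain $\mathfrak{Y}\subset\mathbb{L}^1$ and therefore $R\subset\mathcal{P}_1(\mathbb{R})=\mathcal{P}_q(\mathbb{R})$ with $q=1$, as required.
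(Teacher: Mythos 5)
Your proposal is correct and follows essentially the same route as the paper: the dual representation \eqref{eq:dual_repbis} yields $\mathfrak{Y}$, one sets $R=\{\mathbb{P}_Y : Y\in\mathfrak{Y}\}$, the inequality $\rho\le\rho_R$ comes from the joint law $\mathbb{P}_{(X,Y)}\in\Pi(\mathbb{P}_X,R)$, and the reverse inequality uses law-invariance together with the comonotone rearrangement on a nonatomic space. The only cosmetic difference is that the paper invokes the Hardy--Littlewood-type identity $\sup_{X'\sim X}\mathbb{E}[X'Y]=\int_0^1 F_X^{-1}(t)F_Y^{-1}(t)\dd t$ by citation (to \cite[Lemma 4.60]{follmer2011stochastic}), whereas you sketch the underlying construction of the uniform variable $V$ reconstructing $Y$ directly.
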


The theorem will be obtained as an immediate corollary of Proposition \ref{prop:kusuoka_refined} below. As briefly explained in the introduction, the representation of a \licorm{} through an optimal transportation problem is intimately related to the representation of coherent risk measures as support functions, recalled in the following lemma.

\begin{lemma}[Dual representation of coherent risk measures]
\label{lemma:licorm_support}
Let $\rho \colon \mathbb{L}^p(\Omega,\R) \rightarrow \R$ be a coherent risk measure, which is assumed to satisfy the Fatou property if $p= +\infty$. Then there exists a convex bounded subset $\mathfrak{Y} \subset\mathbb{L}^q(\Omega,\R)$ containing only nonnegative random variables whose expectation is equal to 1, such that
\begin{equation}
\label{eq:rm_support}
\rho(X)
=
\sup_{Y \in \mathfrak{Y}}
\mathbb{E} \big[ XY \big].
\end{equation}
\end{lemma}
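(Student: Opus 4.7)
The plan is to rely on convex duality applied to the Banach space $\mathbb{L}^p(\Omega,\R)$. Since $\rho$ is convex and finite-valued, it is continuous (by a standard consequence of convexity on a Banach space), hence lower semicontinuous. When $p \in [1,+\infty)$, the space $\mathbb{L}^p(\Omega,\R)$ has topological dual $\mathbb{L}^q(\Omega,\R)$, so by the Fenchel--Moreau biconjugate theorem one has $\rho = \rho^{**}$, that is
\begin{equation*}
\rho(X) = \sup_{Y \in \mathbb{L}^q(\Omega,\R)} \Big\{ \mathbb{E}[XY] - \rho^*(Y) \Big\}, \quad \text{where} \quad \rho^*(Y) \triangleq \sup_{X \in \mathbb{L}^p(\Omega,\R)} \Big\{ \mathbb{E}[XY] - \rho(X) \Big\}.
\end{equation*}
The first structural step is to observe that positive homogeneity of $\rho$ forces $\rho^*$ to be valued in $\{0,+\infty\}$; indeed, for any $\delta > 0$ the scaling $X \mapsto \delta X$ in the definition of $\rho^*(Y)$ yields $\rho^*(Y) = \delta\rho^*(Y)$. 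Consequently $\rho^*$ is the indicator function of the convex set
\begin{equation*}
\mathfrak{Y} \triangleq \Big\{ Y \in \mathbb{L}^q(\Omega,\R) ~\text{s.t.}~ \mathbb{E}[XY] \leq \rho(X) ~\text{for all}~ X \in \mathbb{L}^p(\Omega,\R) \Big\},
\end{equation*}
which gives the desired representation $\rho(X) = \sup_{Y \in \mathfrak{Y}} \mathbb{E}[XY]$.

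Next, I would extract the three remaining properties of $\mathfrak{Y}$ by testing the defining inequality $\mathbb{E}[XY] \leq \rho(X)$ against well-chosen random variables. Monotonicity combined with homogeneity gives $\rho(-X) \leq 0$ for every $X \geq 0$ almost surely, hence $\mathbb{E}[-XY] \leq 0$ for every such $X$, which forces $Y \geq 0$ almost surely for all $Y \in \mathfrak{Y}$. Translation invariance applied to $X = \alpha \in \R$ (viewed as a constant random variable) and $X = -\alpha$ yields $\alpha \mathbb{E}[Y] \leq \alpha$ for every $\alpha \in \R$, so $\mathbb{E}[Y] = 1$ for every $Y \in \mathfrak{Y}$. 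Finally, the Lipschitz continuity of $\rho$, which follows from convexity and finiteness, implies that $\rho(X) \leq C \| X \|_p$ for some constant $C > 0$; testing the defining inequality against $X= \mathrm{sign}(Y) |Y|^{q-1}/\|Y\|_q^{q-1}$ (suitably adapted in the limit cases) then gives $\|Y\|_q \leq C$, proving boundedness of $\mathfrak{Y}$ in $\mathbb{L}^q(\Omega,\R)$.

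The main obstacle is the case $p = +\infty$, because $\mathbb{L}^1(\Omega,\R)$ is not the dual of $\mathbb{L}^\infty(\Omega,\R)$, so the Fenchel--Moreau theorem applied naively only yields a representation with $Y$ ranging over the much larger dual $(\mathbb{L}^\infty)^*$ (finitely additive measures). To circumvent this, I would endow $\mathbb{L}^\infty(\Omega,\R)$ with the weak-$*$ topology induced by the predual $\mathbb{L}^1(\Omega,\R)$ and argue that the Fatou property implies weak-$*$ lower semicontinuity of $\rho$: any bounded sequence converging weak-$*$ admits, via Koml\'os-type or Mazur-type arguments, convex combinations converging almost surely, hence in probability, and the Fatou property transfers the liminf bound to $\rho$ itself. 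A direct application of the Fenchel--Moreau theorem in the duality pairing $(\mathbb{L}^\infty, \mathbb{L}^1)$ then yields a representation in which $Y$ ranges over $\mathbb{L}^1(\Omega,\R)$, after which the three structural properties of $\mathfrak{Y}$ follow exactly as before. Alternatively, one may invoke directly the classical result of Delbaen \cite{delbaen2002coherent} in this setting.
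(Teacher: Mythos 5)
Your proposal is correct, but it takes a different route from the paper in the sense that the paper does not actually prove this lemma: it simply invokes \cite[Theorems 6.4 and 6.6]{Shapiro2021} for $p<+\infty$ and \cite[Theorem 3.2]{delbaen2002coherent} for $p=+\infty$ with the Fatou property. What you have written is essentially a reconstruction of the argument underlying those cited results: Fenchel--Moreau biconjugation in the pairing $(\Lbb^p,\Lbb^q)$, the observation that positive homogeneity forces $\rho^*$ to be a $\{0,+\infty\}$-valued indicator of the convex weak-$^*$ closed set $\mathfrak{Y}=\{Y : \E[XY]\leq\rho(X)\ \forall X\}$, and the extraction of nonnegativity, unit expectation, and $\Lbb^q$-boundedness of $\mathfrak{Y}$ by testing against signs of constants, constants, and (near-)norming elements respectively; your treatment of $p=+\infty$ via weak-$^*$ lower semicontinuity derived from the Fatou property is exactly Delbaen's mechanism. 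The one point you should tighten is the opening claim that a finite-valued convex function on a Banach space is automatically continuous: this is false in general (discontinuous linear functionals are convex and finite). The correct justification here uses \emph{monotonicity} together with convexity and finiteness on the Banach lattice $\Lbb^p$ (the Ruszczy\'nski--Shapiro extension of Namioka's theorem), which yields continuity and hence the lower semicontinuity needed for Fenchel--Moreau; the paper makes the same elision in Section \ref{section:preliminaries}, so this is a shared imprecision rather than a gap specific to your argument. With that fix, your self-contained proof is a legitimate alternative to the paper's citation, at the cost of some length; the citation buys brevity and offloads the delicate $p=+\infty$ case entirely.
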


\begin{proof}
This result is demonstrated for $p<+\infty$ in \cite[Theorems 6.4 and 6.6]{Shapiro2021}. In the case in which $p=\infty$ and $\rho$ satisfies the Fatou property, we refer the reader to \cite[Theorem 3.2]{delbaen2002coherent}.
\end{proof}

\begin{proposition}[A refined characterization]
\label{prop:kusuoka_refined}
Let $\rho \colon \mathbb{L}^p(\Omega,\R) \rightarrow \R$ be a coherent risk measure, which is assumed to satisfy the Fatou property if $p= +\infty$. Let $\mathfrak{Y} \subset\mathbb{L}^q(\Omega,\R)$ be a bounded subset containing only nonnegative random variables whose expectation is equal to 1 such that \eqref{eq:rm_support} holds, and define
\begin{equation*}
R \triangleq \Big\{ \mathbb{P}_Y \in \Pcal_q(\R) ~\,\textnormal{s.t.}~ Y \in \mathfrak{Y} \Big\}.
\end{equation*}
Then $R \subset\Pcal_q(\R)$ satisfies Assumptions \ref{hyp:OT}-\textnormal{(i)} and \textnormal{(ii)}, and it holds that
\begin{equation}
\label{eq:kusuoka_plus1}
\rho(X)
\leq \rho_R(X)
\end{equation}
for each $X \in \mathbb{L}^p(\Omega,\R)$.
Moreover, if the probability space $(\Omega,\mathcal{A},\P)$ is nonatomic, one has that
\begin{equation}
\label{eq:kusuoka_plus2}
\sup_{
\begin{subarray}{c}
X' \in \mathbb{L}^p(\Omega,\R) \\
\mathbb{P}_{X'} = \mathbb{P}_X \end{subarray}} \rho(X')= \rho_R(X),
\end{equation}
for each $X \in \mathbb{L}^p(\Omega,\R)$. In particular, if $\rho : \Lbb^p(\Omega,\R) \to \R$ is law-invariant, then $\rho= \rho_R$.
\end{proposition}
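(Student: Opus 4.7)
The plan is to proceed in three main steps that directly mirror the structure of the statement. First I would verify that the set $R$ satisfies Assumptions \ref{hyp:OT}-\textnormal{(i)} and \textnormal{(ii)}: since $\Mcal_q(\P_Y) = \| Y \|_{\Lbb^q(\Omega,\R)}$, the boundedness of $\mathfrak{Y}$ in $\Lbb^q$ transfers immediately to that of $R$ in $\Pcal_q(\R)$, and likewise the nonnegativity and unit expectation of the elements of $\mathfrak{Y}$ translate into $\supp(\P_Y) \subset \R_+$ and $\E[\P_Y] = 1$ for every $Y \in \mathfrak{Y}$.

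Next, for the upper bound \eqref{eq:kusuoka_plus1}, I would fix an arbitrary $X \in \Lbb^p(\Omega,\R)$ and, for every $Y \in \mathfrak{Y}$, consider the joint law $\pi_Y \triangleq (X,Y)_\sharp \P$, which lies in $\Pi(\P_X,R)$ by definition of $R$. One then has
\[ \E[XY] = \int_{\R^2} xy \, \dd \pi_Y(x,y) \leq \chi_R(\P_X) = \rho_R(X), \]
and passing to the supremum over $Y \in \mathfrak{Y}$ in the dual representation \eqref{eq:rm_support} immediately yields $\rho(X) \leq \rho_R(X)$.

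The heart of the proof is the nontrivial inequality in \eqref{eq:kusuoka_plus2}, which I would establish via a \emph{comonotonic rearrangement} principle tied to the nonatomicity of $(\Omega, \Acal, \P)$: namely, for any $Y \in \Lbb^q(\Omega,\R)$, there exists a random variable $U_Y$ uniformly distributed on $[0,1]$ such that $Y = F_Y^{-1}(U_Y)$ almost surely. Granting this lemma, I would set $X'_Y \triangleq F_X^{-1}(U_Y)$ and observe that $\P_{X'_Y} = \P_X$, while by the closed-form expression \eqref{eq:transport_formula} from Proposition \ref{proposition:ot},
\[ \E[X'_Y \, Y] = \int_0^1 F_X^{-1}(t) F_Y^{-1}(t) \, \dd t = \chi_{\P_Y}(\P_X). \]
Combining this identity with the decomposition $\chi_R(m) = \sup_{r \in R} \chi_r(m)$ gives
\[ \rho_R(X) = \sup_{Y \in \mathfrak{Y}} \chi_{\P_Y}(\P_X) = \sup_{Y \in \mathfrak{Y}} \E[X'_Y \, Y] \leq \sup_{Y \in \mathfrak{Y}} \rho(X'_Y) \leq \sup \bigl\{ \rho(X') : X' \in \Lbb^p(\Omega,\R), \, \P_{X'} = \P_X \bigr\}, \]
and the reverse inequality is immediate from \eqref{eq:kusuoka_plus1} applied to each such $X'$, recalling that $\rho_R$ is trivially law-invariant so that $\rho_R(X') = \rho_R(X)$. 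If furthermore $\rho$ is law-invariant, the supremum on the left-hand side of \eqref{eq:kusuoka_plus2} reduces to $\rho(X)$, and the identity $\rho = \rho_R$ follows.

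The main obstacle is therefore the comonotonic rearrangement lemma used to produce $U_Y$: although classical in the risk-measure literature, its proof genuinely requires the nonatomicity of $(\Omega, \Acal, \P)$ in order to randomize the ties on the atoms of $\P_Y$ and thereby obtain a genuinely uniform variable with the comonotonicity property with $Y$.
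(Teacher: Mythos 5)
Your proof is correct and follows essentially the same route as the paper's: the verification of Assumptions \ref{hyp:OT}-(i)--(ii) and the proof of \eqref{eq:kusuoka_plus1} via the joint law $\P_{(X,Y)}$ are identical, and your treatment of \eqref{eq:kusuoka_plus2} amounts to unpacking the paper's citation of F\"ollmer--Schied's Lemma 4.60 (the identity $\sup_{X'\sim X}\E[X'Y]=\int_0^1 F_X^{-1}(t)F_Y^{-1}(t)\,\dd t$, whose proof is exactly the comonotonic rearrangement $Y=F_Y^{-1}(U_Y)$ on a nonatomic space). The only organizational difference is that you derive the inequality $\sup_{X'\sim X}\rho(X')\le\rho_R(X)$ from \eqref{eq:kusuoka_plus1} together with the law-invariance of $\rho_R$, rather than from the paper's exchange of suprema, which is an equally valid way to close the argument.
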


\begin{proof}
Let $\rho : \Lbb^p(\Omega,\R) \to \R$ be a coherent risk measure and let $\mathfrak{Y} \subset L^q(\Omega,\R)$ be as in Lemma \ref{lemma:licorm_support}.
Since the latter is a bounded set containing only nonnegative random variables with expectation equal to 1, the set $R \subset \Pcal_q(\R)$ defined above clearly satisfies Assumptions \ref{hyp:OT}-\textnormal{(i)} and \textnormal{(ii)}.
Let now $X \in \mathbb{L}^p(\Omega,\R)$ and $Y \in \mathfrak{Y}$, and define $\pi \triangleq \mathbb{P}_{(X,Y)} \in \Pcal(\R^2)$. Then $\pi \in \Pi(\P_X,R)$ by construction, and 
\begin{equation*}
\mathbb{E}\big[ XY \big]
= \int_{\R^2} xy \dd \pi(x,y)
\leq \chi_R(\P_X)
= \rho_R(X).
\end{equation*}
Maximizing the left-hand side with respect to $Y \in \mathfrak{Y}$ while using Lemma \ref{lemma:licorm_support}, we obtain \eqref{eq:kusuoka_plus1}.

Let us assume now that the underlying probability space is nonatomic. To prove \eqref{eq:kusuoka_plus2}, we leverage again Lemma \ref{lemma:licorm_support}, which yields that
\begin{equation}
\label{eq:kusuoka_step1}
\sup_{X' \sim X} \rho(X')
=
\sup_{
X' \sim X} \sup_{Y \in \mathfrak{Y}}
\mathbb{E}\big[ X'Y \big]
=
\sup_{Y \in \mathfrak{Y}}
\sup_{
X' \sim X} 
\mathbb{E}\big[ X'Y \big],
\end{equation}
where we write $X' \sim X$ to mean that the random variable $X' \in\Lbb^p(\Omega,\R)$ has the same law as $X \in\Lbb^p(\Omega,\R)$. Next, we claim that
\begin{equation}
\label{eq:ot_interpretation}
\sup_{
X' \sim X} 
\mathbb{E}\big[ X'Y \big]
=
\int_0^1
F_X^{-1}(t)F_Y^{-1}(t) \dd t
=
\chi_{\mathbb{P}_Y}(\mathbb{P}_X).
\end{equation}
The first equality is demonstrated in \cite[Lemma 4.60]{follmer2011stochastic} while the second one immediately follows from \eqref{eq:transport_formula} in Proposition \ref{proposition:ot}.
Combining \eqref{eq:kusuoka_step1} and \eqref{eq:ot_interpretation}, we finally obtain that
\begin{equation*}
\sup_{
X' \sim X} \rho(X')
=
\sup_{Y \in \mathfrak{Y}}
\chi_{\mathbb{P}_Y}(\mathbb{P}_X)
= \chi_R(\mathbb{P}_X)
= \rho_R(X),
\end{equation*}
as was to be demonstrated.
\end{proof}

\begin{remark}[Validity of Theorem \ref{thm:kusuoka} for equiprobable discrete spaces]
The assumption in the statement of Theorem \ref{thm:kusuoka} and Proposition \ref{prop:kusuoka_refined} that $(\Omega,\Acal,\P)$ be nonatomic is only used for proving \eqref{eq:ot_interpretation}, as it is required in \cite[Lemma 4.60]{follmer2011stochastic}. However, it should be stressed that \eqref{eq:ot_interpretation} is also true when $(\Omega,\mathcal{A},\mathbb{P})$ is a finite set made of equiprobable events, see \cite[Lemma 6.25]{Shapiro2021}.
\end{remark}

\begin{remark}[Failure of Theorem \ref{thm:kusuoka} for non equiprobable discrete spaces]
In this remark, we exhibit an atomic probability space $(\Omega,\Acal,\P)$ for which the characterization result of Theorem \ref{thm:kusuoka} fails dramatically. Take $\Omega \triangleq \{ \omega_0,\omega_1 \}$ with $\mathbb{P}(\{\omega_0\})= 1/3$, and consider
\begin{equation*}
\rho(X) \triangleq X(\omega_1).
\end{equation*}
Clearly $\rho \colon \Lbb^p(\Omega,\R) \to \R$ is a coherent risk measure, and is also law-invariant since in our context, two different random variables necessarily have different probability distributions. Indeed, let $X \in \Lbb^p(\Omega,\R)$ be a random variable with probability distribution $m \triangleq \P_X$. Then, either $m= \delta_a$, in which case $X(\omega_0)= X(\omega_1)=a$, or $m = \frac{1}{3} \delta_a + \frac{2}{3} \delta_b$ and then $X(\omega_0)= a$ and $X(\omega_1)= b$. Hence, the random variable is fully determined by its law. Consider now the random variable defined by $X(\omega_0)= 1$ and $X(\omega_1)= 0$, and note that
\begin{equation*}
\rho(X)= 0 < \tfrac{1}{3} = \mathbb{E}[X].
\end{equation*}
This shows that the inequality \eqref{eq:expect_is_lower_bound} from Lemma \ref{lemma:expect_is_lower_bound} fails to hold, and thus there exists no subset $R \subset \Pcal_q(\R)$ satisfying Assumptions \ref{hyp:OT}-\textnormal{(i)} and \textnormal{(ii)} such that $\rho= \rho_R$.
\end{remark}

\begin{remark}[Comparison with Kusuoka's theorem]
\label{remark:kusuoka_comp}
As explained in the introduction, Kusuoka's theorem and our optimal-transport representation of licorms are strongly connected. We first mention that the proof of Theorem \ref{thm:kusuoka} relies on similar arguments to those developed in the proof of Kusuoka's theorem. We highlight here how our representation theorem involving an optimal transport problem can be recovered from the original representation due to Kusuoka.

As a reminder, Kusuoka's theorem \cite{kusuoka2001law} states that any licorm $\rho \colon \mathbb{L}^\infty(\Omega,\R) \rightarrow \R$ satisfying the Fatou property can be written as
\begin{equation}
\label{eq:kusuoka_rep}
\rho(X) = \sup_{\mu \in \mathcal{M}}
\int_0^1 \mathrm{CV@R}_{\beta}(X) \dd \mu(\beta),
\end{equation}
where $\mathcal{M} \subset\Pcal([0,1))$ is a subset of probability measures. Note that this result also holds for $p \in [1,+\infty)$, see e.g. \cite[Theorem 6.24]{Shapiro2021}.
We explain now how our  optimal transport representation may be partially recovered from \eqref{eq:kusuoka_rep}.
The key idea is to show that every convex combination of $(\mathrm{CV@R}_{\beta})_{\beta \in[0,1)}$ can be put in the form \eqref{eq:def_rhoR} with $R:=\{r\}$, and therefore expressed through a standard transport problem. We stress that this is a known fact, and refer the reader e.g. to\@ \cite{ekeland2012comonotonic,ennaji2024robust}. Taking an element $\mu \in\Pcal([0,1))$ and using the representation formula for the conditional value at risk provided e.g. in \cite[Theorem 6.2]{Shapiro2021}, we obtain that
\begin{align*}
\int_0^1 \mathrm{CV@R}_{\beta}(X) \dd \mu(\beta)
= {} &
\int_0^1 \frac{1}{1-\beta} \int_{\beta}^1 F_X^{-1}(t) \dd t \dd \mu(\beta) \\
= {} &
\int_0^1 \Big( \int_{0}^t \frac{1}{\beta} \dd \mu(\beta) \Big)
F_X^{-1}(t) \dd t \\
= {} &
\int_0^1 \psi_{\mu}(t) F_X^{-1}(t) \dd t,
\end{align*}
where $\psi_{\mu} \colon t \in [0,1] \mapsto \int_{0}^t \frac{1}{1-\beta} \dd \mu(\beta) \in \R_+$. The function $\psi_{\mu}$ is obviously nondecreasing and satisfies $\psi_{\mu}(0)=0$. Besides, it can be made right-continuous up to a modification on a set of measure zero, while remaining nonnegative. Upon setting $r_{\mu} \triangleq (\psi_{\mu})_\sharp \mathcal{L}^1_{\llcorner [0,1]}$, we easily check that $F_{r_\mu}^{-1}= \psi_{\mu}$. Moreover, it holds that $r_{\mu}\in\Pcal(\R_+)$ by construction, and we then deduce from Proposition \ref{proposition:ot} that
\begin{equation*}
\int_0^1 \mathrm{CV@R}_{\beta}(X) \dd \mu(\beta)
= \int_0^1 F_{r_\mu}^{-1}(t) F_X^{-1}(t) \dd t
= \rho_{r_\mu}(X), 
\end{equation*}
so that 
\begin{equation*}
\rho(X)= \sup_{\mu \in \mathcal{M}} \rho_{r_{\mu}}(X)= \rho_R(X)
\end{equation*}
with $R \triangleq \bigcup_{\mu \in \mathcal{M}} \{ r_{\mu} \}$.
There now remains to check whether the latter set satisfies Assumptions \ref{hyp:OT}-(i) and (ii). We can verify quite straightforwardly that $\mathbb{E}[ r_{\mu}]=1$ for every $\mu \in \mathcal{M}$. To this purpose, observe that for the constant random variable $X \equiv 1$, one has that $F_X^{-1}(t)=1$ for every $t>0$, and so
\begin{equation*}
\rho_{r_{\mu}}(X) = 1= \int_0^1 \mathrm{CV@R}_{\beta}(1) \dd \mu(\beta) = \rho_{r_\mu}(1)= \int_0^1 F_{r_\mu}^{-1}(t) \dd t
= \int_0^1 \psi_{\mu}(t) \dd t
= \int_0^{+\infty} y \dd r_{\mu}(y),
\end{equation*}
as announced. At this stage, however, it is not clear whether the boundedness of $R\subset\mathcal{P}_q(\R)$ can be easily deduced from Kusuoka's theorem, hence the partial recovery of our own result.
\end{remark}


\section{Duality formulas}
\label{section:duality}

In this section, we prove a general duality formula for risk measures of the form $\rho_R$, under the assumption that $R \subset\Pcal_q(\R)$ is a convex set. In this context, we fix a measure $m \in\Pcal_p(\R)$ and denote by $\Xcal \triangleq  \supp(m)$, and suppose that Assumption \ref{hyp:OT}-\textnormal{(iii)} is in force throughout the section. We point out that Assumption \ref{hyp:OT}-\textnormal{(ii)} will not be used anywhere in our subsequent developments, so in particular, $\rho_R$ may possibly not be a coherent risk-measure.

We recall that $\mathcal{Y}_R \triangleq \bigcup_{r \in R} \, \text{supp}(r)$, and define the support function $\sigma_R \colon \Ccal_q^0(\Ycal_R) \to \R$ of the set $R \subset\Pcal_q(\R)$ as
\begin{equation*}
\sigma_R(g) \triangleq \sup_{r \in R} \, \int_{\Ycal_R} g(y) \dd r(y),
\end{equation*}
for each $g \in \Ccal_q^0(\Ycal_R)$.
In what follows, we will extensively work with the function set introduced in \eqref{eq:def_K} above in the particular case in which $\mathcal{Y} \triangleq \mathcal{Y}_R$, namely
\begin{equation}
\label{eq:Kdef}
K = \bigg\{
(f,g) \in \Ccal_p^0(\Xcal) \times \Ccal_q^0(\Ycal_R) ~\,\textnormal{s.t.}~ xy \leq f(x) + g(y) ~~ \text{for all $(x,y) \in \Xcal \times \Ycal_R$} \bigg\}.
\end{equation}
We are now ready to introduce our first dual problem, given by
\begin{equation} \label{eq:first_dualpb}
\inf_{(f,g) \in K}
\Jcal(f,g) \triangleq
\inf_{(f,g) \in K} \int_{\Xcal} f(x) \dd m(x)
+ \sigma_R(g). 
\end{equation}
In the sequel, we will also consider another dual problem, in which $f \in\Ccal^0_p(\Xcal)$ is replaced by the Fenchel conjugate of $g \in\Ccal_q^0(\Ycal_R)$, see e.g.\@ \cite[Chapter 6]{Hiriart1996}, defined by
\begin{equation*}
g^*(x) \triangleq \sup_{y \in \R} \, xy - g(y) \in \R \cup \{ +\infty \}.
\end{equation*}
for each $x \in\Xcal$. To make sense of this new dual problem, we first need to justify that $g^*$ is $m$-integrable, in the sense that its integral is well defined. To this end, we fix $y_0 \in \Ycal_R$, and note that $g^*(x) \geq xy_0- g(y_0)$ for all $x \in \Xcal$, which implies that
\begin{equation*}
\int_{\Xcal} \Big( g^*(x) - (xy_0-g(y_0)) \Big) \dd m(x) \in \R_+ \cup \{ +\infty \}
\end{equation*}
is well-defined as the integral of a nonnegative function. Similarly, observe that 
\begin{equation*}
\int_{\Xcal} (xy_0 - g(y_0)) \dd m(x)
\end{equation*}
is finite, since every affine functions lies in $\mathcal{C}_p^0(\Xcal,\R)$ regardless of the value of $p \in[1,+\infty]$ (recall in particular that $\Xcal = \supp(m)$ is assumed to bes bounded if $p= +\infty$). Whence, the integral 
\begin{equation*}
\INTDom{g^*(x)}{\Xcal}{m(x)} =  \int_{\Xcal} \Big( g^*(x) - (xy_0-g(y_0)) \Big) \dd m(x) + \int_{\Xcal} (xy_0 - g(y_0)) \dd m(x), 
\end{equation*}
is well-defined in $\R \cup\{+\infty\}$, and it follows easily from the previous identity that 
\begin{equation} \label{eq:equi_def_int}
\int_{\Xcal} g^*(x) \dd m(x)= +\infty
\quad \text{if and only if} \quad
\int_{\Xcal} |g^*(x)| \dd m(x)= +\infty.
\end{equation}
Our second dual problem is then defined as
\begin{equation} 
\label{eq:second_dualpb}
\inf_{g \in \Ccal_q^0(\Ycal_R)}
\tilde{\Jcal}(g) \triangleq
\int_{\Xcal} g^*(x) \dd m(x) 
+ \sigma_R(g).
\end{equation}

\begin{theorem}[Duality formulas]
\label{theo:duality}
Let $m \in \Pcal_p(\R)$ and denote by $\Xcal \triangleq \supp(m)$.
Suppose that $R \subset\Pcal_q(\R_+)$ satisfies Assumption \ref{hyp:OT}-\textnormal{(i)}.
Then, it holds that
\begin{equation} \label{eq:weakDuality1}
\chi_R(m) \leq \inf_{g \in \Ccal_q^0(\Ycal_R)} \tilde{\Jcal}(g)
\leq \inf_{(f,g) \in K} \Jcal(f,g).
\end{equation}
In addition, if Assumption \ref{hyp:OT}-\textnormal{(iii)} holds, then both inequalities become equalities, namely
\begin{equation} \label{eq:StrongDuality}
\chi_R(m) = \inf_{g \in \Ccal_q^0(\Ycal_R)} \tilde{\Jcal}(g)
= \inf_{(f,g) \in K} \Jcal(f,g).
\end{equation}
In particular if $(f,g) \in K$ is a solution of \eqref{eq:first_dualpb}, then $g \in\Ccal_q^0(\Ycal)$ is a solution of \eqref{eq:second_dualpb}.
\end{theorem}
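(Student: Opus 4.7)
The plan is to derive the weak duality chain \eqref{eq:weakDuality1} by elementary manipulations and then obtain strong duality via Sion's minimax theorem. For the first inequality in \eqref{eq:weakDuality1}, I would fix $g \in \Ccal_q^0(\Ycal_R)$ and apply the Fenchel--Young inequality $xy \leq g^*(x) + g(y)$: integrating against an arbitrary $\pi \in \Pi(m,R)$ whose second marginal is $r \in R$ gives $\int_{\R^2} xy \dd \pi \leq \int_{\Xcal} g^* \dd m + \int_{\Ycal_R} g \dd r \leq \tilde{\Jcal}(g)$, so that taking the supremum over $\pi$ and then the infimum over $g$ yields $\chi_R(m) \leq \inf_g \tilde{\Jcal}(g)$. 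For the second inequality, any $(f,g) \in K$ satisfies $xy - g(y) \leq f(x)$ on $\Xcal \times \Ycal_R$ by definition of $K$, whence $g^*(x) \leq f(x)$ on $\Xcal$, so $\tilde{\Jcal}(g) \leq \Jcal(f,g)$, and passing to the infimum over $(f,g)$ concludes.

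For the strong duality, the starting point is Proposition \ref{proposition:ot} applied to each singleton $\{r\} \subset R$, which allows one to rewrite $\chi_R(m) = \sup_{r \in R} \chi_r(m) = \sup_{r \in R} \inf_{(f,g) \in K} F(r,(f,g))$, where $F(r,(f,g)) := \int_{\Xcal} f \dd m + \int_{\Ycal_R} g \dd r$. The key step is then to exchange the $\sup$ and the $\inf$ via Sion's minimax theorem, after which the right-hand side becomes $\inf_{(f,g) \in K} \sup_{r \in R} F(r,(f,g)) = \inf_K \Jcal$, which combined with the weak duality chain yields \eqref{eq:StrongDuality}. To legitimately apply Sion's theorem, I would endow $R$ with the weak-$^*$ topology inherited from $\Ccal_q^0(\Ycal_R)^*$ and verify: $R$ is convex by Assumption \ref{hyp:OT}-\textnormal{(iii)} and weak-$^*$ compact because Assumption \ref{hyp:OT}-\textnormal{(i)} makes $R$ norm-bounded in $\Ccal_q^0(\Ycal_R)^*$ via the pairing estimate $|\langle r, g \rangle| \leq \|g\|_q (1 + \Mcal_q^q(r))$ recalled in the preliminaries, while Assumption \ref{hyp:OT}-\textnormal{(iii)} provides weak-$^*$ closedness, so Banach--Alaoglu applies; $K$ is convex as an intersection of half-spaces; the integrand $F$ is affine in each variable, hence quasi-convex in $(f,g)$ and quasi-concave in $r$, and is continuous in $(f,g)$ for fixed $r$ while weak-$^*$ continuous in $r$ for fixed $(f,g)$ by the very definition of the topology.

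The last claim is then an immediate corollary: if $(f^*, g^*) \in K$ realizes the infimum in \eqref{eq:first_dualpb}, then the weak duality chain combined with the just-established strong duality forces $\chi_R(m) = \Jcal(f^*,g^*) \geq \tilde{\Jcal}(g^*) \geq \chi_R(m)$, so that equality holds throughout and $g^*$ minimizes $\tilde{\Jcal}$. The main obstacle, in my view, will be setting up the topological framework for Sion's theorem cleanly, in particular reconciling weak-$^*$ closedness of $R \subset \Pcal_q(\R)$ as formulated in Assumption \ref{hyp:OT}-\textnormal{(iii)} with closedness within the ambient dual space $\Ccal_q^0(\Ycal_R)^*$, so that $\Mcal_q$-boundedness of $R$ genuinely translates into norm-boundedness and Banach--Alaoglu delivers weak-$^*$ compactness of $R$.
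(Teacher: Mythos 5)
Your weak-duality argument (Fenchel--Young for the first inequality, $g^*\leq f$ on $\Xcal$ for the second, and the squeeze argument for the final claim about dual solutions) is essentially identical to the paper's, modulo the small bookkeeping point that when $\int_{\Xcal}g^*\dd m=+\infty$ the first inequality is trivial and otherwise the pointwise Fenchel--Young inequality can be integrated. For strong duality, however, you take a genuinely different route. The paper starts from the same identity $\chi_R(m)=\sup_{r\in R}\inf_{(f,g)\in K}F(r,(f,g))$ furnished by Proposition \ref{proposition:ot}, but then applies the Fenchel--Rockafellar duality theorem to the dual problem \eqref{eq:first_dualpb}, written as $\inf\,\sigma_{R_m}+\iota_K$, and closes the loop by computing the conjugate $\sigma_{R_m}^*=\iota_{R_m}$ by hand via positive homogeneity and a Hahn--Banach separation argument -- the latter being exactly where convexity and weak-$^*$ compactness of $R$ (hence of $R_m=\{m\}\times R$) enter. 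You instead swap the supremum and infimum directly with Sion's minimax theorem, using the same three ingredients: convexity of $K$, convexity and weak-$^*$ compactness of $R$ (norm-boundedness in $\Ccal_q^0(\Ycal_R)^*$ from Assumption \ref{hyp:OT}-(i) via the pairing estimate, closedness from \ref{hyp:OT}-(iii), Banach--Alaoglu), and separate continuity plus bilinearity of $F$. Since Sion only requires one of the two sets to be compact, and $F$ is finite and affine in each argument, the application is legitimate and your proof is correct. What each approach buys: yours is shorter and avoids the explicit conjugate computation, delegating the separation argument to the minimax theorem; the paper's exhibits the primal-dual pair in explicit Fenchel--Rockafellar form (identity \eqref{eq:altFor}), which is the formulation subsequently exploited in the existence results of Theorems \ref{theo:existence_finitecase} and \ref{theo:existence_bounded} and in Lemma \ref{lemma:computation_sigma}. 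The one point you should make explicit, which you correctly flag as the main obstacle, is that Proposition \ref{proposition:ot} must be invoked with the common target set $\Ycal=\Ycal_R$ (bounded when $q=+\infty$ thanks to \ref{hyp:OT}-(i)) so that the inner infimum runs over the single set $K$ of \eqref{eq:Kdef} independently of $r$.
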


\begin{remark}[Duality formula for the risk measure]
The previous theorem translates as follows for the risk measure $\rho_R \colon \Lbb^p(\Omega,\R) \to \R$. Under Assumption \ref{hyp:OT}-\textnormal{(i)}, it holds that
\begin{equation*}
\rho_R(X) \leq \inf_{g \in \mathcal{C}_q^0(\mathcal{Y}_R)} \mathbb{E}[ g^*(X) ] + \sigma_R(g(X))
\leq \inf_{(f,g) \in K} \mathbb{E}[ f(X)] + \sigma_R(g(X)).
\end{equation*}
If in addition Assumption \ref{hyp:OT}-\textnormal{(iii)} is satisfied, then both inequalities become equalities.
\end{remark}

\begin{remark}[Applications to stochastic programming]
Duality formula are known to be of key relevance to investigate stochastic programs in general, which in our context take the form
\begin{equation}
\label{eq:stoch_prog0}
\inf_{u \in \mathcal{U}} \rho_R(X[u]),
\end{equation}
where $\mathcal{U}$ is a given feasible set and $X \colon u \in \mathcal{U} \mapsto X[u] \in \mathbb{L}^p(\Omega,\R)$ is a correspondence between the input decision variable and the observed random variable. If $R \subset\Pcal_q(\R)$ satisfies Assumptions \ref{hyp:OT}-(i) and (iii), then the above problem is equivalent to
\begin{equation}
\label{eq:stoch_prog2bis}
\inf_{
\begin{subarray}{c}
g \in \mathcal{C}_q^0(\mathcal{Y}_R) \\
u \in \mathcal{U} \end{subarray}}
\mathbb{E} \big[ g^*(X[u]) \big] + \sigma_R(g(X)),
\end{equation}
which is very likely to be easier to investigate from a numerical point of view, as it is a joint minimization problem and not a min-max problem.
In addition, suppose that \eqref{eq:stoch_prog0} admits a solution $\bar{u} \in\Ucal$. Then, if the dual problem corresponding to $\mathbb{P}_{X[\bar{u}]} \in\Pcal_p(\R)$ also has a solution $\bar{g} \in\Ccal^0_q(\Ycal_R)$, then the pair $(\bar{g},\bar{u})$ is a solution of \eqref{eq:stoch_prog2bis}, a problem for which it may be easier to formulate optimality conditions, as the risk measure -- which may be nonsmooth -- does not appear explicitly. This motivates the study conducted at the end of the section concerning the existence of dual solutions.
\end{remark}

\begin{proof}[Proof of Theorem \ref{theo:duality}]
We split the proof of this theorem into two steps. In Step 1, we start by establishing the chain of weak duality inequalities displayed in \eqref{eq:weakDuality1}, whereas in Step 2 we prove the strong duality identities \eqref{eq:StrongDuality} by showing that $\chi_R(m) = \inf_{(f,g) \in K} \Jcal(f,g)$. 

\paragraph*{Step 1 -- Weak duality formulas.} 

We begin by proving the first inequality in \eqref{eq:weakDuality1}. To do so, let $g \in \Ccal_q^0(\Ycal_R)$, fix some $\pi \in \Pi(m,R)$ and denote by $r \triangleq \efrak^2_{\sharp} \pi$. We claim that
\begin{equation} \label{eq:claim_weak}
\int_{\Xcal \times \Ycal_R} xy \dd \pi(x,y)
\leq \int_{\Xcal} g^*(x) \dd m(x)
+ \int_{\Ycal_R} g(y) \dd r(y).
\end{equation}
Indeed, if $\int_{\Xcal} g^*(x) \dd m(x)= +\infty$, the inequality is trivially satisfied. Otherwise, $g^*$ is necessarily $m$-summable by the equivalence \eqref{eq:equi_def_int}, and \eqref{eq:claim_weak} follows from the Fenchel-Young inequality
\begin{equation*}
xy \leq g(x) + g^*(y),
\end{equation*}
which holds for all $(x,y) \in\Xcal \times\Ycal_R$. At this stage, observe that \eqref{eq:claim_weak} entails in particular that
\begin{equation*}
\int_{\Xcal \times \Ycal_R}
xy \dd \pi(x,y) \leq \int_{\Xcal} g^*(x) \dd m(x) + \sigma_R(g).
\end{equation*}
Taking the supremum with respect to $\pi \in \Pi(m,R)$ and then the minimum with respect to $g \in \Ccal_q^0(\Ycal_R)$, we further obtain
\begin{equation*}
\chi_R(m) \leq
\inf_{g \in \Ccal_q^0(\Ycal_R)}
\bigg\{ \int_{\Xcal} g^*(x) \dd m(x)
+ \sigma_R(g) \bigg\}.
\end{equation*}
Let us now prove the second inequality in \eqref{eq:weakDuality1}. To this end, let $(f,g) \in K$ and note that for any $x \in \Xcal$, there holds
\begin{equation*}
g^*(x) = \, \sup_{y \in \Ycal_R} xy- g(y) \leq f(x), 
\end{equation*}
whence
\begin{equation*}
\int_{\Xcal} g^*(x) \dd m(x) \leq \int_{\Xcal} f(x) \dd m(x)    
\end{equation*}
and consequently $\tilde{\Jcal}(g) \leq \Jcal(f,g)$ for every $(f,g) \in K$. Assume lastly that both problems \eqref{eq:first_dualpb} and \eqref{eq:second_dualpb} have the same value (which will be verified next under Assumption \ref{hyp:OT}-(iii)) and that $(f,g) \in K$ is a solution of \eqref{eq:first_dualpb}. Since we have shown that $\tilde{\Jcal}(g) \leq \Jcal(f,g)$, it necessarily follows that $g \in\Ccal^0_q(\Ycal_R)$ is a solution of \eqref{eq:second_dualpb}.

\paragraph*{Step 2 -- Strong duality formulas.} 

We next prove the chain of equalities in \eqref{eq:StrongDuality} under Assumption \ref{hyp:OT}-(iii). Note that as a consequence of \eqref{eq:weakDuality1}, it suffices to show that $\chi_R(m)= \inf_{(f,g) \in K} \Jcal(f,g)$. To this end, we first establish the duality formula
\begin{equation} \label{eq:altFor}
\chi_R(m)
= \sup_{(m',r') \,\in\, \Ccal_p^0(\Xcal)^* \times \Ccal_q^0(\Ycal_R)^*}
\Big\{
- \iota_{R_m}(m',r') - \sigma_K(-m',-r') \Big\},
\end{equation}
where $R_m \triangleq \{ m \} \times R$ and $\iota_{R_m} \colon \Ccal_q^0(\Xcal)^* \times\Ccal_q^0(\Ycal_R)^* \to[0,+\infty]$ is the convex indicator function
\begin{equation*}
\iota_{R_m}(m',r') \triangleq 
\left\{
\begin{aligned}
& 0 ~~ &\text{if $(m',r') \in R_m$}, \\
& +\infty ~~ & \text{otherwise}. 
\end{aligned}
\right.
\end{equation*}
Next we let $\sigma_K \colon \Ccal_p^0(\Xcal)\times\Ccal_q^0(\Ycal_R) \to \R \cup\{+\infty\}$ be the support function of the set $K$, defined as
\begin{equation*}
\sigma_K(m',r') \triangleq \sup_{(f,g) \in K} \, \INTDom{\Big( f(x) + g(y) \Big)}{\Xcal \times \Ycal_R}{(m' \times r')(x,y)}
\end{equation*}
for each $(m',r') \in \Ccal_p^0(\Xcal)^* \times \Ccal_q^0(\Ycal_R)^*$, where we used the usual duality identification $(\Ccal_p^0(\Xcal) \times \Ccal_q^0(\Ycal_R))^* \simeq \Ccal_p^0(\Xcal)^* \times \Ccal_q^0(\Ycal_R)^*$. By Proposition \ref{proposition:ot}, we know that
\begin{equation*}
\chi_{r'}(m')
= - \sup_{(f,g) \in K} \bigg\{- \int_{\Xcal} f(x) \dd m'(x) - \int_{\Ycal_R} g(y) \dd r'(y) \bigg\}
= - \sigma_K(-m',-r')
\end{equation*}
for any $(m',r') \in R_m$, wherefore
\begin{equation} \label{eq:someEq}
\chi_R(m)
= \sup_{(m',r') \in R_m} \chi_{r'}(m')
= \sup_{(m',r') \in R_m} - \sigma_K(-m',-r')
\end{equation}
from which \eqref{eq:altFor} follows. Next we turn our attention back to the dual problem \eqref{eq:first_dualpb}, and begin by observing that we have the following equalities
\begin{equation*}
\begin{aligned}
\inf_{(f,g) \in K} \int_{\Xcal} f(x) \dd m(x) + \sigma_{R} (g)
& =  \inf_{(f,g) \in K} \sup_{(m',r') \in R_m} \int_{\Xcal} f(x) \dd m'(x) + \int_{\Ycal_R} g(y) \dd r'(y)\\
& = \inf_{(f,g) \in K} \sigma_{R_m}(f,g) \\
& = \inf_{(f,g) \in \Ccal_p^0(\Xcal) \times \Ccal_q^0(\Ycal_R)} \sigma_{R_m}(f,g) + \iota_K(f,g).
\end{aligned}
\end{equation*}
Remarking that $\sigma_{R_m} : \Ccal^0_p(\Xcal) \times\Ccal^0_q(\Ycal_R) \to[0,+\infty]$ has full domain whereas $\iota_K : \Ccal^0_p(\Xcal) \times\Ccal^0_q(\Ycal_R) \to[0,+\infty]$ is convex and proper (the non-emptiness of $K$ was verified in the proof of Proposition \ref{proposition:ot}), we may apply Fenchel-Rockafellar's duality theorem (see e.g.\@ \cite{Rockafellar1966}) to obtain
\begin{equation}
\label{eq:someothereq}
\inf_{(f,g) \in K} \int_{\Xcal} f(x) \dd m(x) + \sigma_{R} (g)
\, = \sup_{(m',r') \,\in\, \Ccal_p^0(\Xcal)^* \times \Ccal_q^0(\Ycal_R)^*} \Big\{
- \sigma_{R_m}^*(m',r') - \sigma_K(-m',-r') \Big\}.
\end{equation}
Thus, in view of \eqref{eq:someEq} and \eqref{eq:someothereq}, there only remains to show that
\begin{equation}
\label{eq:ConjugateIdentity}
\sigma_{R_m}^*(m',r')= \iota_{R_m}(m',r'). 
\end{equation}
This identity will be proven by hand, based on the observation that $(\mathcal{C}_p^0(\Xcal) \times\Ccal_q^0(\Ycal_R))^*$ endowed with the weak-$^*$ topology is a locally convex Hausdorff topological vector space whose dual is exactly $\mathcal{C}_p^0(\Xcal) \times\Ccal_q^0(\Ycal_R)$, see for instance \cite[Chapter 3 -- Propositions 3.11 and 3.14]{Brezis}. 

First we notice that $\sigma_{R_m}(0,0)= 0$, so that
\begin{equation*}
\sigma_{R_m}^*(m',r') \geq \langle (m',r'),(0,0) \rangle - \sigma_{R_m}(0,0) = 0,
\end{equation*}
for every $(m',r')\in \Ccal_p^0(\Xcal)^* \times\Ccal_q^0(\Ycal_R)^*$. Assume next that $\sigma_{R_m}^*(m',r') > 0$ for some $(m',r')\in \Ccal_p^0(\Xcal)^* \times\Ccal_q^0(\Ycal_R)^*$, so there must exist a pair $(f,g) \in \Ccal_p^0(\Xcal) \times\Ccal_q^0(\Ycal_R)$ such that 
\begin{equation*}
\langle (m',r'),(f,g) \rangle - \sigma_{R_m}(f,g)>0.    
\end{equation*}
Upon noting that $\sigma_{R_m} \colon \Ccal_p^0(\Xcal) \times \Ccal_q^0(\Ycal_R) \to [0,+\infty]$ is positively homogeneous, this further implies that
\begin{align*}
\sigma_{R_m}^*(m',r')
& \geq \,\sup_{\alpha \geq 0} \big\langle (m',r'), (\alpha f, \alpha g) \big\rangle - \sigma_{R_m}(\alpha f, \alpha g) \\
& = \, \sup_{\alpha \geq 0} \alpha \Big( \big\langle (m',r'), (f,g) \big\rangle - \sigma_{R_m}(f,g) \Big) = +\infty, 
\end{align*}
whence the function $\sigma_{R_m}^*$ can only take the values $\{0,+\infty\}$. At this stage, take a pair $(m',r') \in R_m
$, and observe that by definition of the support function, one has that 
\begin{equation*}
\sigma_{R_m}(f,g) \geq \langle (m',r'), (f,g) \rangle    
\end{equation*}
for any $(f,g) \in \Ccal_p^0(\Xcal) \times\Ccal_q^0(\Ycal_R)$. By taking the supremum over all such couples, this implies that $\sigma_{R_m}^*(m',r') \leq 0$, so necessarily $\sigma_{R_m}^*(m',r')= 0$. Let us now pick $(m',r') \notin R_m$. Since $R \subset \Pcal_q(\Ycal_R)$ is convex and compact with respect to the weak-$^*$ topology of $\Ccal_p^0(\Ycal_R)^*$, the set $R_m =  \{m\} \times R$ is also convex and weakly-$^*$ compact. Recalling that the weak-$^*$ topology is locally convex, we may infer from Hahn-Banach's separation principle (see e.g.\@ \cite[Theorem 5.79]{Aliprantis2006}) the existence of an $\varepsilon > 0$ along with a couple $(f_{\epsilon},g_{\epsilon}) \in \Ccal_p^0(\Xcal) \times \Ccal_q^0(\Ycal_R)$ such that
\begin{equation*}
\langle (m',r'),(f_{\epsilon},g_{\epsilon}) \rangle
\geq \langle (m_0,r_0),(f_{\epsilon},g_{\epsilon}) \rangle + \varepsilon
\end{equation*}
for all $(m_0,r_0) \in R_m$. This directly implies that
\begin{equation*}
\langle (m',r'),(f_{\epsilon},g_{\epsilon}) \rangle
\geq \sigma_{R_m}(f_{\epsilon},g_{\epsilon}) + \varepsilon,
\end{equation*}
which in turn yields $\sigma_{R_m}^*(m',r') \geq \varepsilon > 0$, and so $\sigma_{R_m}^*(m',r')= +\infty$ for $(m',r') \notin R_m$. In summary, we have proven \eqref{eq:ConjugateIdentity}, which together with \eqref{eq:someEq} and \eqref{eq:someothereq} allows us to deduce that
\begin{equation*}
\begin{aligned}
\inf_{(f,g) \in K} \int_{\Xcal} f(x) \dd m(x) + \sigma_{R} (g)
& = \sup_{(m',r') \,\in\, \Ccal_p^0(\Xcal)^* \times \Ccal_q^0(\Ycal_R)^*} \Big\{ - \sigma_K(-m',-r') - \iota_{R_m}(m',r') \Big\} \\
& = \sup_{(m',r') \,\in\, R_m} - \sigma_K(-m',-r') \\
& = \chi_R(m),
\end{aligned}
\end{equation*}
and concludes the proof. 
\end{proof}

\begin{corollary}[Concavity of $\chi_R$]
Suppose that Assumptions \ref{hyp:OT}-\textnormal{(i)} and \text{(iii)} hold. Then, the map $\chi_R : \Pcal_p(\R) \to \R$ is concave.
\end{corollary}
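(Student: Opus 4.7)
My plan is to invoke the second duality formula from Theorem \ref{theo:duality}, which under Assumptions \ref{hyp:OT}-\textnormal{(i)} and \textnormal{(iii)} yields the representation
\[
\chi_R(m) = \inf_{g \in \Ccal_q^0(\Ycal_R)} \bigg\{ \int_{\R} g^*(x) \dd m(x) + \sigma_R(g) \bigg\},
\]
where I have harmlessly replaced $\Xcal = \supp(m)$ by $\R$ in the integral, since $m$ concentrates on $\Xcal$ and $g^* \colon \R \to \R \cup \{+\infty\}$ is defined on all of $\R$. The structural feature I want to exploit is that the admissible set $\Ccal_q^0(\Ycal_R)$ does not depend on $m$, while for each fixed $g$ the functional $m \mapsto \int_{\R} g^* \dd m + \sigma_R(g)$ is affine in $m$ in the usual sense for convex combinations of probability measures. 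Hence $\chi_R$ is a pointwise infimum of affine functionals of $m$, and concavity will follow essentially for free.

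More concretely, given $m_1, m_2 \in \Pcal_p(\R)$ and $\theta \in [0,1]$, I would set $m_{\theta} \triangleq (1-\theta) m_1 + \theta m_2 \in \Pcal_p(\R)$ and exploit linearity of the integral to write, for each $g \in \Ccal_q^0(\Ycal_R)$,
\[
\int_\R g^* \dd m_\theta + \sigma_R(g) = (1-\theta) \bigg[ \int_\R g^* \dd m_1 + \sigma_R(g) \bigg] + \theta \bigg[ \int_\R g^* \dd m_2 + \sigma_R(g) \bigg].
\]
Taking the infimum over $g \in \Ccal_q^0(\Ycal_R)$ on both sides, and using the elementary inequality $\inf(a+b) \geq \inf a + \inf b$ applied to the two bracketed families, I would conclude
\[
\chi_R(m_\theta) \geq (1-\theta) \chi_R(m_1) + \theta \chi_R(m_2),
\]
which is precisely the desired concavity.

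The only mild subtlety I foresee is that $\int_\R g^* \dd m$ may equal $+\infty$ for certain $g$, but this causes no harm: it suffices to work with the standard conventions $a + \infty = +\infty$ and $(1-\theta) \cdot \infty + \theta \cdot a = +\infty$ for $\theta \in [0,1)$, and in any event such $g$ do not influence the infima, since Lemma \ref{lemma:bound_transport} guarantees that $\chi_R(m) \in \R$ for every $m \in \Pcal_p(\R)$. I do not anticipate any genuine obstacle here, as the whole difficulty has been absorbed into the proof of Theorem \ref{theo:duality}: concavity is merely a structural consequence of the dual representation.
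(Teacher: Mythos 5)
Your argument is correct and is essentially the paper's own proof: both invoke the strong duality identity of Theorem \ref{theo:duality} to write $\chi_R$ as the pointwise infimum over $g \in \Ccal_q^0(\Ycal_R)$ of the functionals $m \mapsto \int g^* \dd m + \sigma_R(g)$, which are affine in $m$, and conclude concavity. Your additional remarks on replacing $\supp(m)$ by $\R$ and on handling the value $+\infty$ are sound and merely make explicit what the paper leaves implicit.
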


\begin{proof}
The result could have been derived by means of gluing techniques similar to those utilized in Section \ref{section:ot-rm}. For the sake of concision, we simply notice that for given $g \in \mathcal{C}_q^0$, the cost function $\tilde{\Jcal}(g)$ is an affine function of $m \in\Pcal_p(\R)$. Thus by \eqref{eq:StrongDuality}, the function $\chi_R : \Pcal_p(\R) \to \R$ is the pointwise infimum of a family of affine functions, and therefore concave.
\end{proof}

We close this section by discussing with two concrete and highly relevant situations in which the dual problems at hand admit a solution.

\begin{theorem}[Existence of dual solutions for licorms with finite support]
\label{theo:existence_finitecase}
Suppose that Assumptions \ref{hyp:OT}-\textnormal{(i)} and \textnormal{(iii)} hold, and assume additionally that $\Ycal_R\subset\R_+$ is a finite set. Then, both problems \eqref{eq:first_dualpb} and \eqref{eq:second_dualpb} have a solution.
\end{theorem}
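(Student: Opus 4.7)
The plan is to recast \eqref{eq:second_dualpb} as a convex minimization on a finite-dimensional space, produce a minimizer via the direct method, and then transfer it to a minimizer of \eqref{eq:first_dualpb}. Writing $\Ycal_R = \{y_1,\dots,y_N\}$, we identify $\Ccal_q^0(\Ycal_R)$ with $\R^N$ through $g \leftrightarrow (g(y_i))_{i=1}^N$. Under this identification, $g^*(x) = \max_{i=1,\dots,N}(xy_i - g(y_i))$ is a maximum of finitely many affine functions in $x$, hence piecewise affine with linear growth, so $g^* \in \Ccal_p^0(\Xcal)$. Since $m \in \Pcal_p(\R) \subset \Pcal_1(\R)$, dominated convergence shows that $g \mapsto \int_{\Xcal} g^*(x)\dd m(x)$ is finite and continuous on $\R^N$, as is $\sigma_R$, so $\tilde{\Jcal} \colon \R^N \to \R$ is continuous and convex. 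For fixed $g$, the pointwise smallest admissible $f$ in $K$ is exactly $f = g^*$, so the infima in \eqref{eq:first_dualpb} and \eqref{eq:second_dualpb} coincide and any minimizer $\bar g$ of the latter produces a minimizer $(\bar g^*, \bar g) \in K$ of the former. It therefore suffices to prove existence for \eqref{eq:second_dualpb}.

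The heart of the argument is a coercivity analysis modulo the natural one-dimensional gauge invariance. Since every $r \in R$ is a probability measure, $(g + c\mathbf{1})^* = g^* - c$ and $\sigma_R(g + c\mathbf{1}) = \sigma_R(g) + c$ for each $c \in \R$, so that $\tilde{\Jcal}(g + c\mathbf{1}) = \tilde{\Jcal}(g)$. We may therefore minimize on the hyperplane $E := \{g \in \R^N : g(y_1) = 0\} \simeq \R^{N-1}$. For any $u \in \R^N$, dominated convergence again yields
\[ \lim_{t \to +\infty} \frac{\tilde{\Jcal}(tu)}{t} \, = \, \sigma_R(u) - \min_{i} u_i, \]
which is the recession of $\tilde{\Jcal}$ in direction $u$. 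For $u \in E \setminus \{0\}$, $u$ is not a multiple of $\mathbf{1}$, so the set $S_u := \{i : u_i = \min_j u_j\}$ is a proper subset of $\{1,\dots,N\}$. Since $\Ycal_R = \bigcup_{r \in R} \supp(r)$, some $r_0 \in R$ must give positive mass to some $y_j$ with $j \notin S_u$, whence $\sigma_R(u) \geq \sum_i u_i r_0(\{y_i\}) > \min_i u_i$. Continuity in $u$ and compactness of the unit sphere of $E$ then provide a constant $c > 0$ such that $\sigma_R(u) - \min_i u_i \geq c$ for every $u \in E$ with $\|u\| = 1$.

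The existence claim then follows by the direct method. If $\tilde{\Jcal}|_E$ were not coercive, one could extract an unbounded minimizing sequence $(h_n) \subset E$ with $\tilde{\Jcal}(h_n)$ bounded and $u_n := h_n/\|h_n\| \to u \in E$, $\|u\| = 1$. Convexity yields
\[ \tilde{\Jcal}(tu_n) \, \leq \, \frac{t}{\|h_n\|} \tilde{\Jcal}(h_n) + \Big(1 - \frac{t}{\|h_n\|}\Big) \tilde{\Jcal}(0) \]
for $t \leq \|h_n\|$, which passes to the limit by continuity to give $\tilde{\Jcal}(tu) \leq \tilde{\Jcal}(0)$ for every $t > 0$; dividing by $t$ and letting $t \to +\infty$ then contradicts the recession lower bound $c > 0$. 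Hence $\tilde{\Jcal}|_E$ is continuous and coercive on $\R^{N-1}$, so it attains its minimum at some $\bar g \in E$, which is a global minimizer of $\tilde{\Jcal}$ by translation invariance; the pair $(\bar g^*, \bar g) \in K$ then solves \eqref{eq:first_dualpb}. The main delicate point is the strict positivity of the recession on $E \setminus \{0\}$: it relies crucially both on identifying $\R\mathbf{1}$ as the unique invariance direction and on the defining property $\Ycal_R = \bigcup_{r \in R} \supp(r)$, without which an entire nontrivial subspace of directions would have vanishing recession and coercivity would fail.
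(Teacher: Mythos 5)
Your proof is correct, and its overall skeleton matches the paper's: reduce to the second dual problem \eqref{eq:second_dualpb} by noting that $f=g^*$ is the pointwise smallest admissible first component, identify $\Ccal_q^0(\Ycal_R)$ with a finite-dimensional space, quotient out the invariance $g \mapsto g + c\mathbf{1}$, and conclude by continuity plus coercivity. Where you genuinely diverge is in the coercivity step, which is the heart of the matter. The paper argues by an explicit estimate: using the definition of $\Ycal_R$ it picks for each atom $y_k$ a measure $r^{(k)} \in R$ charging it, averages them (this is where convexity of $R$ from Assumption \ref{hyp:OT}-(iii) is invoked) to get a single $\bar{r}\in R$ with all weights positive, and then constructs by hand a dual certificate $(s_0,\dots,s_K)$ yielding the linear lower bound $\hat{\Jcal}(g) \geq -C + \sum_k \min\{\bar{r}_k,\varepsilon\}\,|g_k|$. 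You instead compute the recession function $\sigma_R(u) - \min_i u_i$ of $\tilde{\Jcal}$ via dominated convergence, show it is strictly positive on the unit sphere of the reduced hyperplane $E$ (a proper sublevel set $S_u$ plus the defining property of $\Ycal_R$ gives strict positivity pointwise, and compactness of the sphere upgrades this to a uniform bound $c>0$), and then derive coercivity from convexity by the standard asymptotic-direction contradiction. Your route is slightly softer and arguably more conceptual: it isolates $\R\mathbf{1}$ as the exact kernel of the recession and, notably, does not use convexity of $R$ at all in the coercivity step, since it only needs that each point of $\Ycal_R$ is charged by \emph{some} element of $R$ rather than by a single common element. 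The paper's route is more quantitative, producing an explicit coercivity constant. Both are complete; the one point worth making explicit in your write-up is that positivity of the recession at each individual $u$ must be promoted to a uniform bound before it controls the limit $\tilde{\Jcal}(tu)/t$ along varying directions $u_n \to u$ --- you do state the compactness argument that achieves this, so the proof stands.
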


\begin{proof}
To begin with, note that by the definition of the dual problems \eqref{eq:first_dualpb} and \eqref{eq:second_dualpb}, it is sufficient to prove that the latter has a solution. Indeed, if $g \in\Ccal_q^0(\Ycal_R)$ is a solution of \eqref{eq:second_dualpb}, then $g^*$ is Lipschitz continuous as the Fenchel transform of a function with bounded domain. In particular $g^* \in\Ccal_p^0(\Xcal)$, and the pair $(g^*,g) \in K$ solves \eqref{eq:first_dualpb} since $\Jcal(g^*,g) = \tilde{\Jcal}(g)$.

Since $\Ycal_R \subset \R_+$ is assumed to be finite, we may describe it as $\Ycal_R \triangleq \{ y_0,\ldots,y_K \}$. To alleviate notations, we shall represent every $(g,r) \in \Ccal_q^0(\Ycal_R) \times \Pcal_q(\Ycal_R)$ by  
\begin{equation}
\label{eq:RepresentationEmpirical}
g \triangleq \sum_{k=0}^K g_k \mathds{1}_{y_k} \qquad \text{and} \qquad r \triangleq \sum_{k=0}^K r_k \delta_{y_k}
\end{equation}
for some tuples $(g_0,\ldots,g_K) \in\R^{K+1}$ and $(r_0,\ldots,r_K) \in\R^{K+1}$. Then, we begin by observing that for any $g \in \Ccal_q^0(\Ycal_R)$, one has that
\begin{equation*}
\int_{\Xcal} g^*(x) \dd m(x)
+ \sigma_R(g)=
\int_{\Xcal} \tilde{g}^*(x) \dd m(x)
+ \sigma_R(\tilde{g})
\end{equation*}
whenever $\tilde{g} \triangleq g + C$ for some constant $C \in\R$. Therefore, we assume that $g_0=0$ in the dual problem \eqref{eq:second_dualpb} without changing its value. Thus, the latter becomes equivalent to the finite-dimensional program
\begin{equation*}
\inf_{(g_1,\ldots,g_K) \in \R^K}
\hat{\Jcal}(g_1,\ldots,g_K) \triangleq \inf_{(g_1,\ldots,g_K) \in \R^K}
\int_{\Xcal} \, \left( \, \max_{k \in\{0,\ldots,K\}} \, xy_k- g_k \, \right) \dd m(x)
+ \sup_{r \in R}
\bigg( \sum_{k=1}^K g_k r_k \bigg).
\end{equation*}
Hence, to ensure the existence of a solution to the latter problem, it is sufficient to prove that the functional $\hat{\Jcal} \colon \R^K \to \R\cup\{+\infty\}$ is lower semicontinuous and coercive.

We begin by showing that $\hat{\Jcal} \colon \R^K \to \R\cup\{+\infty\}$ is Lipschitz continuous with respect to the supremum norm over $\R^K$. First, observe that the map
\begin{equation*}
(g_1,\ldots,g_K) \in \R^K \mapsto \sum_{k=1}^K g_k r_k \in \R    
\end{equation*}
is $1$-Lipschitz for any fixed tuple $(r_1,\ldots,r_K) \in\R^K$, so that the supremum with respect to $r \in R$ is again $1$-Lipschitz. By the same argument, the map 
\begin{equation*}
(g_1,\ldots,g_K) \in \R^K \mapsto \max_{k \in\{1,\ldots K\}} xy_k - g_k \, \in \R
\end{equation*}
is also $1$-Lipschitz for any $x \in \Xcal$, from whence we easily deduce that $\hat{\Jcal} \colon \R^K \to \R \cup \{+\infty\}$ is Lipschitz continuous. We next show that $\hat{\Jcal} \colon \R^K \to \R \cup \{+\infty\}$ is coercive. Recalling our convention \eqref{eq:RepresentationEmpirical} for representing empirical measures, it is easy to see that for any $k \in \{0,\ldots,K\}$, there exists some $r^{(k)} \in R$ such that $r_k^{(k)}> 0$. Next, we consider
\begin{equation*}
\bar{r} \triangleq \frac{1}{K+1} \sum_{k=0}^K r^{(k)} \in R
\end{equation*}
where we used the fact that $R \subset\Pcal_q(\Ycal_R)$ is convex, and note that $\bar{r}_k > 0$ for all $k\in\{0,\ldots,K\}$ by construction. For any $g \in \R^{K+1}$, it further holds that
\begin{equation} \label{eq:boundgk1}
\sigma_R(g) \geq \sum_{k=0}^K g_k \bar{r}_k.
\end{equation}
Then, letting $\varepsilon \triangleq \bar{r}_0/K>0$, we define
\begin{equation*}
s_k = \left\{
\begin{aligned}
& \bar{r}_k + \varepsilon &  \text{if $g_k \leq 0$}, \\
& 0 & \text{otherwise},
\end{aligned}
\right.
\end{equation*}
for each $k\in\{1,\ldots,K\}$, and remark that 
\begin{equation*}
\sum_{k=1}^K s_k \leq K\varepsilon + \sum_{k=1}^K \bar{r}_k = \sum_{k=0}^K \bar{r}_k = 1.    
\end{equation*}
At this stage, set $s_0\triangleq 1- \sum_{k=1}^K s_k \geq 0$, and note that 
\begin{align}
\int_{\Xcal}
\, \left( \, \max_{k \in\{0,\ldots,K\}}  xy_k- g_k \right) \dd m(x)
\geq {} & \int_{\Xcal}
\sum_{k=0}^K s_k (xy_k- g_k) \dd m(x) \notag \\
= {} &
\Big( \sum_{k=0}^K s_k y_k \Big) \int_{\Xcal} x \dd m(x)
- \sum_{k=1}^K s_k g_k
\notag \\
\geq {} &  - C - \sum_{k=1}^K s_k g_k, \label{eq:boundgk2}
\end{align}
where $C \triangleq \big( \max_{k\in\{0,\ldots K\}} |y_k| \big) |\E[m]|$. Combining the lower bounds \eqref{eq:boundgk1} and \eqref{eq:boundgk2} while using the definition of the tuple $(s_0,\ldots,s_K) \in \R^{K+1}$, we finally obtain that
\begin{equation*}
\hat{\Jcal}(g_1,\dots,g_K) \geq - C + \sum_{k=1}^K (\bar{r}_k - s_k) g_k \geq - C + \sum_{k=1}^K \min\{\bar{r}_k,\varepsilon\} |g_k|,
\end{equation*}
which proves the coercivity of $\hat{\Jcal} \colon \R^K \to \R \cup\{+\infty\}$ and concludes the proof.
\end{proof}

We now deal with the case in which $m \in\Pcal_p(\R)$ has a bounded support and $\mathcal{Y}_R \subset \R$ is bounded. The proof is inspired by that of \cite[Proposition 9.16]{bonnans2019convex}, and follows standard arguments subtending the existence of Kantorovich potentials for classical optimal transport problems.

\begin{theorem}[Existence of dual solutions for licorms with bounded support] \label{theo:existence_bounded}
Suppose that Assumptions \ref{hyp:OT}-\textnormal{(i)} and \textnormal{(iii)} hold. In addition, assume that $m\in\Pcal_p(\R)$ has a bounded support and that the set $\mathcal{Y}_R \subset\R$ is bounded. Then, both dual problems \eqref{eq:first_dualpb} and \eqref{eq:second_dualpb} have a solution.
\end{theorem}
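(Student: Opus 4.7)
The plan is to reduce to showing existence for the Fenchel-type dual problem \eqref{eq:second_dualpb}: once a minimizer $\bar g \in \Ccal_q^0(\Ycal_R)$ is produced, $\bar g^*$ is Lipschitz on $\Xcal$ (being the Fenchel conjugate of a function effectively defined on the bounded set $\Ycal_R$), hence lies in $\Ccal_p^0(\Xcal)$, and the pair $(\bar g^*, \bar g)$ solves \eqref{eq:first_dualpb} since $\Jcal(\bar g^*, \bar g) = \tilde{\Jcal}(\bar g)$. This is the same reduction used at the start of the proof of Theorem \ref{theo:existence_finitecase}.

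For \eqref{eq:second_dualpb} itself, I would pick any minimizing sequence $(g_n)_{n \in \N} \subset \Ccal_q^0(\Ycal_R)$ of $\tilde{\Jcal}$ and regularize it through the $c$-transform
\begin{equation*}
\bar g_n(y) \triangleq \sup_{x \in \Xcal} \Big( xy - g_n^*(x) \Big), \qquad y \in \Ycal_R.
\end{equation*}
Setting $M_{\Xcal} \triangleq \sup_{x \in \Xcal} |x|$ and $M_{\Ycal_R} \triangleq \sup_{y \in \Ycal_R} |y|$, three facts are central: (i) $\bar g_n$ is $M_\Xcal$-Lipschitz as a supremum of $M_\Xcal$-Lipschitz affine functions; (ii) the Fenchel-Young inequality $g_n^*(x) \geq xy - g_n(y)$ yields $\bar g_n \leq g_n$ pointwise on $\Ycal_R$; (iii) since $\bar g_n \leq g_n$ implies $\bar g_n^* \geq g_n^*$, while conversely the definition of $\bar g_n$ gives $\bar g_n(y) \geq xy - g_n^*(x)$ for every $x \in \Xcal$ (so that $xy - \bar g_n(y) \leq g_n^*(x)$ and hence $\bar g_n^*(x) \leq g_n^*(x)$), one obtains the identity $\bar g_n^*(x) = g_n^*(x)$ for every $x \in \Xcal$. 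Since each $r \in R$ is a nonnegative measure, (ii) gives $\sigma_R(\bar g_n) \leq \sigma_R(g_n)$; together with (iii) this produces $\tilde{\Jcal}(\bar g_n) \leq \tilde{\Jcal}(g_n)$, so $(\bar g_n)$ is still a minimizing sequence, now consisting of uniformly Lipschitz functions.

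Next I would normalize, observing that $\tilde{\Jcal}$ is invariant under the shift $g \mapsto g + c$ for any $c \in \R$, because each $r \in R$ is a probability (whence $\sigma_R(g+c) = \sigma_R(g) + c$) while $(g+c)^*(x) = g^*(x) - c$. Fixing any $y_0 \in \Ycal_R$ and replacing each $\bar g_n$ by $\bar g_n - \bar g_n(y_0)$, the Lipschitz estimate yields $|\bar g_n(y)| \leq M_\Xcal |y - y_0| \leq 2 M_\Xcal M_{\Ycal_R}$ uniformly in $n$ and $y$. Arzelà-Ascoli applied on the compact set $\overline{\Ycal_R}$ then extracts a subsequence $\bar g_{n_k}$ converging uniformly to some $\bar g \in \Ccal_b(\overline{\Ycal_R})$, whose restriction to $\Ycal_R$ belongs to $\Ccal_q^0(\Ycal_R)$.

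To finish I would pass to the limit. Uniform convergence of $\bar g_{n_k} \to \bar g$ on $\Ycal_R$ yields $\sigma_R(\bar g_{n_k}) \to \sigma_R(\bar g)$ via the $1$-Lipschitz bound $|\sigma_R(g_1) - \sigma_R(g_2)| \leq \|g_1 - g_2\|_\infty$, and likewise $\bar g_{n_k}^* \to \bar g^*$ uniformly on $\Xcal$, so that integrating against the probability $m$ yields $\int_\Xcal \bar g_{n_k}^* \dd m \to \int_\Xcal \bar g^* \dd m$. Therefore $\tilde{\Jcal}(\bar g) = \lim_k \tilde{\Jcal}(\bar g_{n_k}) = \inf \tilde{\Jcal}$, and $\bar g$ solves \eqref{eq:second_dualpb}. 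The main subtle point is the careful verification of the identity $\bar g_n^* = g_n^*$ on $\Xcal$: without it, the $c$-regularization would only bound the $\sigma_R$ term while possibly increasing the $\int g^* \dd m$ term, and the whole scheme would collapse.
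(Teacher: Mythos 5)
Your proof is correct and follows essentially the same strategy as the paper's: regularize a minimizing sequence by (double) Fenchel conjugation to obtain equi-Lipschitz functions, normalize by an additive constant, extract a uniformly convergent subsequence via Arzelà--Ascoli, and pass to the limit using the Lipschitz continuity of the cost in the supremum norm. The only (immaterial) difference is the direction of the reduction: the paper produces a minimizing pair for \eqref{eq:first_dualpb} and deduces a solution of \eqref{eq:second_dualpb} from the last assertion of Theorem \ref{theo:duality}, whereas you solve \eqref{eq:second_dualpb} directly — using the clean identity $\bar g_n^* = g_n^*$ to control the $\int g^*\dd m$ term — and then recover a solution $(\bar g^*,\bar g)$ of \eqref{eq:first_dualpb} as in Theorem \ref{theo:existence_finitecase}.
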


\begin{proof}
It is sufficient to prove to the existence of a solution $(f,g) \in K$ to  \eqref{eq:first_dualpb}, since then $g \in \Ccal_q^0(\Ycal_R)$ is also a solution to \eqref{eq:second_dualpb}. Denoting by $L_{\Xcal} \triangleq \sup_{x \in \Xcal} |x|$ and $L_{\Ycal_R} \triangleq \sup_{y \in \Ycal_R} |y|$ where $\Xcal\triangleq\supp(m)$, we take a minimizing sequence $((f_k,g_k))_{k \in \mathbb{N}} \subset K$ and define
\begin{equation*}
\tilde{f}_k(x) \triangleq g_k^*(x) \qquad \text{and} \qquad \tilde{g}_k(y) \triangleq \tilde{f}_k^*(y),
\end{equation*}
for every $x \in \mathcal{X}$ and $y \in \mathcal{Y}_R$ and for each $k \in\N$. We claim that $((\tilde{f}_k,\tilde{g}_k))_{k \in \mathbb{N}}$ is also a minimizing sequence for \eqref{eq:first_dualpb}. First, note that since 
\begin{equation*}
\tilde{f}_k(x) = \sup_{y \in \Ycal_R} xy - g_k(y),
\end{equation*}
for all $k \in \mathbb{N}$, the latter is $L_{\Ycal_R}$-Lipschitz as the pointwise supremum of a family of $L_{\Ycal_R}$-Lipschitz functions, and in particular $\tilde{f_k} \in \Ccal_p^0(\Xcal)$. For the same reason, one also has that $\tilde{g}_k \in \Ccal_q^0(\Ycal_R)$ is $L_{\Ycal_R}$-Lipschitz for each $k \in\N$, and that $(\tilde{f}_k,g_k) \in K$ as a direct consequence of the Fenchel-Young inequality. It then stems from what precedes along with basic facts on Fenchel conjugates (see e.g.\@ \cite[Section 6.1]{Hiriart1996}) that  $(\tilde{f}_k,\tilde{g}_k) \in K$ for each $k \in \mathbb{N}$, with 
\begin{equation*}
\tilde{f}_k \leq f_k \qquad \text{and} \qquad \tilde{g}_k = (g_k^*)^* \leq g_k.
\end{equation*}
It is then easy to see that
\begin{equation*}
\Jcal(\tilde{f}_k,\tilde{g}_k) \leq \Jcal(\tilde{f}_k,g_k) \leq  \Jcal(f_k,g_k),  
\end{equation*}
which implies that $((\tilde{f}_k,\tilde{g}_k))_{k \in \mathbb{N}} \subset K$ is a minimizing sequence for problem \eqref{eq:first_dualpb}.

We have already proven that $(\tilde{f}_k)_{k\in\N} \subset \Ccal_p^0(\Xcal)$ and $(\tilde{g}_k)_{k\in\N} \subset\Ccal_q^0(\Ycal_R)$ are sequences of equi-Lipschitz functions, with constant $L_{\Xcal}$ and $L_{\Ycal_R}$ respectively. Moreover, it can be shown by elementary computations that $(\tilde{f}_k-C)^* = \tilde{g}_k +C$ and 
\begin{equation*}
\Jcal(\tilde{f}_k - C, \tilde{g}_k + C) = \Jcal(\tilde{f}_k, \tilde{g}_k)
\end{equation*}
for every $C \in\R$. Fixing some $x_0 \in \Xcal$ and taking $C \triangleq \tilde{f}_k(x_0)$ in the previous identity, we may thus posit that $\tilde{f}_k(x_0) = 0$. This, together with the preceding uniform Lipschitz bound and the fact that $\Xcal\subset \R$ is compact, entails that
\begin{equation*}
\sup_{k \in \N} \max_{x \in \Xcal} |\tilde{f}_k(x)| < +\infty.
\end{equation*}
Recalling that $\tilde{g}_k = \tilde{f}_k^*$ and that $\Ycal_R \subset \R$ is also a compact set, this further implies 
\begin{equation*}
\sup_{k \in \N} \max_{y \in \Ycal_R} |\tilde{g}_k(y)| < +\infty.
\end{equation*}
Therefore, we can apply the Ascoli-Arzelà theorem (see e.g.\@ \cite[Theorem 11.28]{Rudin1987}) to infer the existence of a subsequence of $((\tilde{f}_k,\tilde{g}_k))_{k \in \mathbb{N}} \subset K$ that we do not relabel, which converges uniformly towards some $(\bar{f},\bar{g}) \in \Ccal_p^0(\Xcal) \times \Ccal_q^0(\Ycal_R)$. Observing that the set $K \subset \Ccal_p^0(\Xcal) \times\Ccal_q^0(\Ycal_R)$ is closed for this topology, we further have that $(\bar{f},\bar{g}) \in K$. Finally, as it can be shown quite easily that $\Jcal \colon \Ccal_p^0(\Xcal) \times \Ccal_q^0(\Ycal_R) \to \R$ is Lipschitz continuous in the supremum norm, we obtain 
\begin{equation*}
\Jcal(\bar{f},\bar{g}) = \lim_{k \to +\infty} \Jcal(\tilde{f}_k,\tilde{g}_k)
\leq \lim_{k \to +\infty} \Jcal(f_k,g_k)
= \inf_{(f,g) \in K} \Jcal(f,g)
\end{equation*}
which thereby shows the optimality of the pair $(\bar{f},\bar{g}) \in K$ and concludes the proof.
\end{proof}


\section{Examples}
\label{section:examples}

In this section, we show how a large class of licorms which are highly relevant in applications fit in the framework we developed.  

\subsection{Conditional Value at Risk}

In this subsection, we begin by we analyzing the famed Conditional Value at Risk, which was briefly discussed in Remark \ref{rmk:Intro} of the introduction. In the sequel, we fix a probability level $\beta \in [0,1)$, and the definition of the mapping $\mathrm{CV@R}_{\beta} \colon \mathbb{L}^1(\Omega,\R) \rightarrow \R$, given by
\begin{equation*}
\mathrm{CV@R}_{\beta}(X) \triangleq \inf_{t \in \R} \left\{ t + \frac{1}{1-\beta} \mathbb{E} \big[ (X-t)_+ \big] \right\},
\end{equation*}
for every $X \in \mathbb{L}^1(\Omega,\R)$.
Next we define $R_{\beta} \triangleq \{ r_\beta \}$ where
\begin{equation*}
r_\beta \triangleq \beta \delta_0
+ (1-\beta) \delta_{1/(1-\beta)}.
\end{equation*}
It is quite obvious that $R_{\beta} \subset\mathcal{P}_c(\R)$ satisfies Assumptions \ref{hyp:OT}-(i), (ii) and (iii), but also that the associated support $\Ycal_{R_{\beta}} =\{ 0, 1/(1-\beta) \} \subset \R_+$ is a finite set. Below, we show that the latter set allows for an optimal transport representation of the conditional value at risk. 

\begin{proposition}[Optimal transport representation of $\mathrm{CV@R_{\beta}}$]
For any $X \in \mathbb{L}^1(\Omega,\R)$, it holds that
\begin{equation*}
\mathrm{CV@R}_{\beta}(X)= \rho_{R_\beta}(X).
\end{equation*}
\end{proposition}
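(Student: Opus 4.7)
The plan is to exploit the fact that $R_\beta$ is a singleton, which reduces the generalized optimal transport problem defining $\rho_{R_\beta}(X) = \chi_{r_\beta}(\mathbb{P}_X)$ to a classical one-dimensional transport problem, and to then invoke the explicit formula \eqref{eq:transport_formula} from Proposition \ref{proposition:ot}. Concretely, since $R_\beta \subset \Pcal_c(\R)$ satisfies the boundedness Assumption \ref{hyp:OT}-(i) and $\mathcal{Y}_{R_\beta} = \{0,1/(1-\beta)\}$ is bounded, Proposition \ref{proposition:ot} yields
\begin{equation*}
\chi_{r_\beta}(\mathbb{P}_X) = \int_0^1 F_X^{-1}(t) F_{r_\beta}^{-1}(t) \dd t.
\end{equation*}

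The first concrete step will be to compute the quantile function $F_{r_\beta}^{-1}$ explicitly. Since $r_\beta$ places mass $\beta$ at $0$ and mass $1-\beta$ at $1/(1-\beta)$, its cumulative distribution function jumps from $0$ to $\beta$ at the origin and from $\beta$ to $1$ at $1/(1-\beta)$; hence $F_{r_\beta}^{-1}(t) = 0$ for $t \in [0,\beta]$ and $F_{r_\beta}^{-1}(t) = 1/(1-\beta)$ for $t \in (\beta,1]$. Substituting this in the transport formula yields
\begin{equation*}
\chi_{r_\beta}(\mathbb{P}_X) = \frac{1}{1-\beta} \int_\beta^1 F_X^{-1}(t) \dd t.
\end{equation*}

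The last step will be to identify the right-hand side with $\mathrm{CV@R}_\beta(X)$. This quantile representation of the Conditional Value at Risk is entirely classical; it can be quoted directly from \cite[Theorem 6.2]{Shapiro2021} (which is already used in the paper's Remark \ref{remark:kusuoka_comp}), where it is shown to coincide with the infimal convolution definition given in \eqref{eq:dualityCV@R}. Combining the two displays then gives $\rho_{R_\beta}(X) = \mathrm{CV@R}_\beta(X)$ for every $X \in \mathbb{L}^1(\Omega,\R)$.

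There is no real obstacle here: once Proposition \ref{proposition:ot} is in place, the entire argument reduces to computing one quantile function and invoking a textbook identity. An alternative route, arguably more in the spirit of the paper, would be to apply the duality Theorem \ref{theo:duality}: for a singleton $R_\beta$ one has $\sigma_{R_\beta}(g) = \beta g(0) + (1-\beta) g(1/(1-\beta))$, and writing the infimum in \eqref{eq:first_dualpb} by parametrizing $g$ through a single real variable $t \triangleq -g(0)$ (choosing $g$ affine with slope $1/(1-\beta)$ on $[0,1/(1-\beta)]$) would recover \eqref{eq:dualityCV@R} directly. I would nevertheless favor the quantile approach above, which is shorter and makes no choice of dual test function.
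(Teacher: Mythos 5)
Your proof is correct, but it takes a genuinely different route from the paper's. The paper proves this proposition as an illustration of its duality machinery: it invokes Theorem \ref{theo:duality} to write $\rho_{R_\beta}(X)$ as the dual problem $\inf_g \E[g^*(X)]+\sigma_{R_\beta}(g)$, normalizes $g(0)=0$ as in the proof of Theorem \ref{theo:existence_finitecase}, reduces to the single scalar parameter $\theta \triangleq g(1/(1-\beta))$, computes $g^*$ explicitly, and recovers the infimal-convolution definition \eqref{eq:dualityCV@R} after the change of variable $t=(1-\beta)\theta$ --- essentially the alternative route you sketch in your closing paragraph. Your primal argument instead exploits the singleton structure directly via the comonotone-coupling formula \eqref{eq:transport_formula}: the computation of $F_{r_\beta}^{-1}$ is right (up to the irrelevant endpoint $t=0$, where the right-inverse equals $-\infty$ on a Lebesgue-null set), the hypotheses of Proposition \ref{proposition:ot} are met since $\Ycal_{R_\beta}$ is bounded and $q=+\infty$, and the resulting expression $\tfrac{1}{1-\beta}\int_\beta^1 F_X^{-1}(t)\,\dd t$ is indeed the classical quantile representation of $\mathrm{CV@R}_\beta$, which the paper itself quotes from \cite[Theorem 6.2]{Shapiro2021} in Remark \ref{remark:kusuoka_comp}. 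What your route buys is brevity and independence from the convex-duality results of Section \ref{section:duality}; what it gives up is self-containedness, since the identification with the definition \eqref{eq:dualityCV@R} is outsourced to a textbook identity rather than derived, and it no longer showcases the dual formulas that this example is evidently meant to illustrate. Both proofs are valid.
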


\begin{proof}
By the duality result of Theorem \ref{theo:duality}, we know that 
\begin{equation*}
\rho_{R_\beta}(X)
= \inf_{g \in \Ccal_b(\{ 0, 1/(1-\beta)\})} \mathbb{E}\big[ g^*(X) \big]
+ \sigma_{R_\beta}(g(X)).
\end{equation*}
Furthermore, as discussed in the proof of Theorem \ref{theo:existence_finitecase}, we can require that $g(0)=0$ without modifying the value of the problem, which then boils down to optimizing over the parameter $\theta \triangleq g(1/(1-\beta))$. More precisely, we have that
\begin{equation*}
\begin{aligned}
\rho_{R_\beta}(X) & = \inf_{\theta \in \R} \Big\{ \E \bigg[ \sup_{y \in\R} X y - g(y) \Big] + \INTDom{g(x)}{\R}{r_{\beta}(x)} \bigg\} \\
& = \inf_{\theta \in \R}
\left\{
\mathbb{E} \Big[ \max \Big(0, \frac{X}{1-\beta} - \theta \Big) \Big] + (1-\beta) \theta \right\},
\end{aligned}
\end{equation*}
at which point the change of variable $ t\triangleq (1-\beta) \theta$ yields the desired identity.
\end{proof}

\subsection{Higher moment measures}

In this subsection we fix some $p \in (1,+\infty)$ along with a constant $c > 1$, and focus on the licorm $\rho_{p,c} \colon \mathbb{L}^p(\Omega,\R) \rightarrow \R$ defined by
\begin{equation}
\label{eq:def_high_order_measure}
\rho_{p,c}(X)
\triangleq \inf_{t \in \R} \, \left\{ t + c \, \mathbb{E} \big[ (X-t)_+^p \big]^{1/p} \right\}.
\end{equation}
This risk measure is referred to as \emph{higher order dual risk measure} in \cite{dentcheva2010kusuoka}, and is known to be a licorm (see e.g.\@ \cite[Example 1.4 and Theorem 1]{krokhmal2007higher}). The main result of this subsection is an optimal transport representation of $\rho_{p,c}$ in terms of the subset $R_{q,c} \subset \mathcal{P}_q(\R_+)$ defined by
\begin{equation*}
R_{q,c}
\triangleq \left\{ r \in \mathcal{P}(\R_+)  ~\,\textnormal{s.t.}~
\int_{\R_+} y \dd r(y)= 1 ~~\text{and}~~
\int_{\R_+} y^q \dd r(y) \leq c^q
\right\}.
\end{equation*}
It can be easily checked that this set satisfies Assumptions \ref{hyp:OT}-(i), (ii), and (iii).

\begin{proposition}[Optimal transport representation of higher moment risk measures]
\label{prop:ot_rep_higher}
For all $p \in (1,+\infty)$ and $c>1$, it holds that
\begin{equation}
\label{eq:formula_high}
\rho_{p,c}(X)
= \rho_{R_{q,c}}(X)
\end{equation}
for each $X \in \Lbb^p(\Omega,\R)$.
\end{proposition}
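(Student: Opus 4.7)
The plan is to derive the identity \eqref{eq:formula_high} by combining the Kusuoka-type characterization provided by Proposition \ref{prop:kusuoka_refined} with the classical dual representation of higher-order risk measures. Since both $\rho_{p,c}$ and $\rho_{R_{q,c}}$ depend on $X$ only through its law $m := \P_X$ — the former by inspection of \eqref{eq:def_high_order_measure}, the latter by construction — I would first reduce without loss of generality to the case in which $(\Omega, \mathcal{A}, \P)$ is nonatomic, by transferring $X$ to a nonatomic probability space supporting the law $m$.

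Next, I would invoke the classical dual representation
\[
\rho_{p,c}(X) = \sup_{Y \in \mathfrak{Y}_{p,c}} \E[XY], \qquad \mathfrak{Y}_{p,c} := \big\{ Y \in \Lbb^q(\Omega, \R) : Y \geq 0, \, \E[Y] = 1, \, \|Y\|_q \leq c \big\},
\]
which may be found in \cite{krokhmal2007higher,dentcheva2010kusuoka}; a short self-contained derivation proceeds by inserting the H\"older duality $\|(X-t)_+\|_p = \sup \{ \E[(X-t)_+ Z] : Z \geq 0, \|Z\|_q \leq 1 \}$ into \eqref{eq:def_high_order_measure}, interchanging the infimum over $t$ with the supremum over $Z$, and performing the change of variable $Y := cZ$.

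The set $\mathfrak{Y}_{p,c}$ is convex, bounded in $\Lbb^q(\Omega, \R)$, and consists of nonnegative random variables with unit expectation. Hence Proposition \ref{prop:kusuoka_refined} applies and yields $\rho_{p,c}(X) = \rho_{\tilde R}(X)$, where $\tilde R := \{ \P_Y : Y \in \mathfrak{Y}_{p,c} \}$. To conclude it suffices to check that $\tilde R = R_{q,c}$: the inclusion $\tilde R \subseteq R_{q,c}$ is immediate from the definitions, since for any $Y \in \mathfrak{Y}_{p,c}$ the law $\P_Y$ is supported in $\R_+$, has unit mean, and satisfies $\int y^q \, d\P_Y = \E[Y^q] \leq c^q$. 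For the reverse inclusion, the nonatomicity of $(\Omega, \mathcal{A}, \P)$ allows to realize each $r \in R_{q,c}$ as the law of some $Y \in \Lbb^q(\Omega, \R)$, which then automatically belongs to $\mathfrak{Y}_{p,c}$ by the very definition of $R_{q,c}$.

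The main obstacle I anticipate is the minimax interchange needed for the self-contained derivation of the dual representation: since the outer infimum ranges over all of $\R$, standard compactness-based minimax theorems do not apply verbatim, and one must instead exploit the coerciveness of $t \mapsto t + c\|(X-t)_+\|_p$. A conceptually attractive alternative would be to apply Theorem \ref{theo:duality} directly, restricting \eqref{eq:first_dualpb} to the two-parameter family $g_{s,\lambda}(y) := sy + \lambda y^q$ with $\lambda > 0$: explicit computations of $g_{s,\lambda}^*$ and of $\sigma_{R_{q,c}}(g_{s,\lambda}) = s + \lambda c^q$ immediately produce the upper bound $\rho_{R_{q,c}}(X) \leq \rho_{p,c}(X)$, but the matching lower bound would still require constructing a near-optimal transport plan in $\Pi(m, R_{q,c})$, which becomes delicate when the infimum defining $\rho_{p,c}$ is attained at $\esssup X$.
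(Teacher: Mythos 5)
Your proposal is correct, but the second half of your argument follows a genuinely different route from the paper's. Both proofs begin the same way: identify the dual set $\mathfrak{Y}_{p,c}=\{Y\geq 0,\ \E[Y]=1,\ \|Y\|_q\leq c\}$ and feed it into Proposition \ref{prop:kusuoka_refined}. For this first step the paper computes the Fenchel conjugate $\rho_{p,c}^*(Y)$ directly (a sup--sup computation after the change of variable $Z=X-t$, no minimax interchange required), which cleanly sidesteps the obstacle you flag in your ``self-contained derivation''; if you do not want to rely on the cited references, that is the computation to imitate. Where you diverge is in closing the argument: you invoke the full equality \eqref{eq:kusuoka_plus2}, which forces you to pass to a nonatomic space and transfer back via law-invariance (a legitimate step, since both $\rho_{p,c}$ and $\rho_{R_{q,c}}$ are explicit functionals of the law $m=\P_X$ and every $m\in\Pcal_p(\R)$ is realized on a nonatomic space), and then to check $\tilde R=R_{q,c}$ by realizing each $r\in R_{q,c}$ as a law. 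The paper instead uses only the one-sided inequality \eqref{eq:kusuoka_plus1} (which needs no nonatomicity) to get $\rho_{p,c}\leq\rho_{R_{q,c}}$, and proves the converse by a direct estimate: for any $\pi\in\Pi(\P_X,R_{q,c})$ and any $t\in\R$, writing $\int xy\dd\pi=\int(x-t)y\dd\pi+t\leq\int(x-t)_+y\dd\pi+t$ and applying H\"older gives $\int xy\dd\pi\leq c\,\E[(X-t)_+^p]^{1/p}+t$. This is essentially the ``matching lower bound'' you describe as delicate in your alternative route --- it is in fact elementary, because one does not need to construct a near-optimal plan, only to bound every plan from above. The paper's version thus stays on the original probability space and avoids both the transfer argument and the appeal to \cite[Lemma 4.60]{follmer2011stochastic} hidden inside \eqref{eq:kusuoka_plus2}; your version buys a slightly more conceptual proof (pure identification of dual sets) at the cost of the nonatomicity detour.
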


\begin{proof}
The proof of this result will rely on computing the set $\mathfrak{Y} \subset\Lbb^q(\Omega,\R)$ involved in the standard dual representation \eqref{eq:dual_repbis}  of the risk measure $\rho_{p,c} : \Lbb^p(\Omega,\R) \to \R$, namely
\begin{equation*}
\rho_{p,c}(X) = \sup_{Y \in\mathfrak{Y}} \E[XY]. 
\end{equation*}
We shall see that the latter is exactly the domain of the Fenchel conjugate $\rho_{p,c}^*$. To begin with, let $Y \in \mathbb{L}^q(\Omega,\R)$ be a nonnegative random variable of expectation equal to 1, and note that 
\begin{align*}
\rho_{p,c}^*(Y)
= \sup_{\begin{subarray}{c} X \in \mathbb{L}^p(\Omega,\R) \\ t \in \R \end{subarray}}
\Big\{
\mathbb{E}[ XY ] -t - c \mathbb{E} \big[ (X-t)_+^p \big]^{1/p} \Big\}
= \sup_{Z \in \mathbb{L}^p(\Omega,\R)}
\Big\{
\mathbb{E}[ ZY ] - c \mathbb{E} \big[ Z_+^p \big]^{1/p} \Big\}
\end{align*}
where the second equality follows from the change of variable $Z \triangleq X+t$. It is then easy to realize that, since $Y \geq 0$ almost surely, we have
\begin{equation*}
\rho_{p,c}^*(Y)
= \sup_{Z \in \mathbb{L}^p(\Omega,\R)}
\Big\{
\mathbb{E}[ ZY ] - c \| Z \|_p \Big\}
= \sup_{A \geq 0} \,
\sup_{\begin{subarray}{c} X \in \mathbb{L}^p(\Omega,\R) \\ \| X \|_p = A \end{subarray}}
\mathbb{E}[ZY] - c A.
\end{equation*}
Combined the basic duality formula $\sup_{\| X \|_p = A}
\mathbb{E}\big[XY\big] = A \| Y \|_q$, we further obtain
\begin{equation*}
\rho_{p,c}^*(Y)
= \sup_{A \geq 0} A \big( \| Y \|_q - c \big)
= \left\{
\begin{aligned}
& 0 & \text{if $\| Y \|_q \leq c$}, \\
& + \infty & \text{otherwise,}
\end{aligned}
\right.
\end{equation*}
which combined with the standard dual representation of licorms recollected above entails that
\begin{equation*}
\mathfrak{Y}
= \Big\{
Y \in \mathbb{L}^q(\Omega,\R)
~\,\text{s.t.}~ Y \geq 0,~ \mathbb{E}\big[ Y \big]=1 ~\text{and}~  \| Y \|_q \leq c
\Big\}.
\end{equation*}
Upon observing that $\big\{ \mathbb{P}_Y ~\,\text{s.t.}~ Y \in \mathfrak{Y} \big\} \subseteq R_{q,c}$, we immediately infer from the inequality \eqref{eq:kusuoka_plus1} in Proposition \ref{prop:kusuoka_refined} that $\rho_{p,c} \leq \rho_{R_{q,c}}$. To prove the converse inequality, we take some $X \in \mathbb{L}^p(\Omega,\R)$, fix any $\pi \in \Pi(\P_X,R_{q,c})$ and choose some $t \in \R$. Then, we have that
\begin{align*}
\int_{\R^2} xy \dd \pi(x,y)
= {} & \int_{\R^2} (x-t)y \dd \pi(x,y) + t \\
\leq {} & \int_{\R^2} (x-t)_+ y \dd \pi(x,y) + t \\
\leq {} & \Big( \int_{\R^2} (x-t)_+^p \dd \pi(x,y)  \Big)^{1/p} \Big( \int_{\R^2} y^q \dd \pi(x,y) \Big)^{1/q} + t \\
\leq {} & c\, \mathbb{E}\big[ (X-t)_+^p \big]^{1/p} + t,
\end{align*}
where first equality derives from the fact that the $\E[\efrak^2_{\sharp}\pi]=1$, and the first inequality is a consequence of the fact that $\text{supp}(\pi) \subset \R \times \R_+$. The second inequality stems simply from H\"older's inequality, while the last inequality finally follows since $\pi \in \Pi(\P_X,R_{q,c})$. Maximizing the left-hand side with respect to $\pi \in \Pi(m,R_{q,c})$ and minimizing the right-hand side with respect to $t \in\R$ then yields the desired inequality. 
\end{proof}

The set $R_{q,c} \subset\Pcal_q(\R)$ introduced above being convex and closed for the weak-$^*$ topology, we may apply the duality results of Section \ref{section:duality} to the licorm $\rho_{p,c} : \Lbb^p(\Omega,\R) \to \R$. In the next proposition, we show the existence of dual solutions, and show that they take a rather particular form. To do so , given $(t,u) \in \R \times \R_+$, we consider the function $g_{t,u} : \R_+\to\R$ defined as
\begin{equation}
\label{eq:gtuDef}
g_{t,u}(y) \triangleq ty + uy^q,
\end{equation}
for every $y \in \R_+$.

\begin{proposition}[Duality formulas for higher moment risk measures]
\label{prop:higher_moment_duality}
For any $X \in \mathbb{L}^p(\Omega,\R)$, it holds that
\begin{equation}
\label{eq:prop:higher}
\rho_{p,c}(X)
= \inf_{g \in \mathcal{C}_q^0(\mathcal{Y}_R)} \mathbb{E}\big[ g^*(X) \big] + \sigma_R(g)
= \inf_{(t,u) \in \R \times\R_+} \mathbb{E}\big[ g_{t,u}^*(X) \big] + t + uc^q.
\end{equation}
In addition, there exists a minimizer $(\bar{t},\bar{u}) \in \R \times \R_+$ for the second optimization problem in \eqref{eq:prop:higher} such that $g_{\bar{t},\bar{u}}$ is a minimizer of the first one.
\end{proposition}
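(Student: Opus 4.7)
The first equality in \eqref{eq:prop:higher} follows directly by combining Proposition \ref{prop:ot_rep_higher} (which gives $\rho_{p,c}(X) = \rho_{R_{q,c}}(X)$) with the strong duality formula \eqref{eq:StrongDuality} of Theorem \ref{theo:duality} applied to $R = R_{q,c}$, whose Assumptions \ref{hyp:OT}-(i), (ii), (iii) have already been verified.

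For the second equality, I would first check that $g_{t,u}$ lies in $\mathcal{C}_q^0(\R_+)$ --- which is immediate from $|g_{t,u}(y)|/(1+y^q) \leq |t|+|u|$ --- and then compute its support function over $R_{q,c}$. For $u \geq 0$ and any $r \in R_{q,c}$, the defining constraints of $R_{q,c}$ yield
\[
\int_{\R_+} g_{t,u}(y) \dd r(y) = t + u \int_{\R_+} y^q \dd r(y) \leq t + u c^q,
\]
with equality attained by the two-point measure $(1-c^{-p}) \delta_0 + c^{-p} \delta_{c^p} \in R_{q,c}$ (recall $q/(q-1)=p$). Hence $\sigma_{R_{q,c}}(g_{t,u}) = t + uc^q$, and using $\{g_{t,u}\}_{(t,u) \in \R \times \R_+}$ as candidates in the first infimum of \eqref{eq:prop:higher} immediately yields that the first infimum is bounded above by the second.

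The reverse inequality relies on an explicit computation of $g_{t,u}^*(x) = \sup_{y \geq 0} (x-t)y - u y^q$. For $u > 0$, the inner maximum is attained at $y^\star = ((x-t)_+/(qu))^{p-1}$, and using the identities $q(p-1) = p$ and $p+q = pq$, one finds $g_{t,u}^*(x) = (x-t)_+^p / (p(qu)^{p-1})$. Solving the one-dimensional problem $\inf_{u \geq 0}\{\mathbb{E}[g_{t,u}^*(X)] + u c^q\}$ in closed form gives the value $c\, \mathbb{E}[(X-t)_+^p]^{1/p}$, attained at $u^\star(t) := \mathbb{E}[(X-t)_+^p]^{1/p}/(q c^{q-1})$ (and at $0$ in the degenerate case $\mathbb{E}[(X-t)_+^p] = 0$). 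Taking the infimum over $t \in \R$ therefore recovers precisely $\rho_{p,c}(X)$, closing the chain.

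For the existence assertion, I would show that $t \mapsto t + c\, \mathbb{E}[(X-t)_+^p]^{1/p}$ is continuous on $\R$ (by dominated convergence) and coercive: the reverse triangle inequality combined with $c > 1$ forces divergence as $t \to -\infty$, and dominated convergence yields $(X-t)_+^p \to 0$ in $L^1$, hence divergence also as $t \to +\infty$. A minimizer $\bar t \in \R$ therefore exists, and setting $\bar u := u^\star(\bar t) \in \R_+$ produces a solution $(\bar t, \bar u)$ of the second problem. Since the two infima in \eqref{eq:prop:higher} coincide and the first infimum evaluated at $g_{\bar t, \bar u}$ equals $\mathbb{E}[g_{\bar t, \bar u}^*(X)] + \bar t + \bar u c^q$ by the computation above, $g_{\bar t, \bar u}$ also minimizes the first problem. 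The main subtle points will be the explicit Fenchel conjugate computation and the clean handling of the boundary case $u = 0$.
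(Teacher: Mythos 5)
Your proof is correct, but it reaches the second equality in \eqref{eq:prop:higher} by a genuinely different and more elementary route than the paper. The paper's proof goes through Lemma \ref{lemma:computation_sigma}, an infinite-dimensional Fenchel--Rockafellar computation expressing $\sigma_{R_{q,c}}(g)$ for \emph{arbitrary} $g\in\Ccal_q^0(\R_+)$ as an infimum over triples $(s,t,u)$; this structural formula is what lets the paper argue that the minimization over all $g$ may be restricted to the family $g_{t,u}$ without loss. You instead sidestep that lemma entirely: you only compute $\sigma_{R_{q,c}}(g_{t,u})=t+uc^q$ directly (your extremal measure $(1-c^{-p})\delta_0+c^{-p}\delta_{c^p}$ does lie in $R_{q,c}$ and saturates the moment constraint, since $p(q-1)=q$), which gives the inequality ``first infimum $\leq$ second infimum'', and then you close the loop by evaluating the second infimum in closed form --- the inner minimization over $u$ yields $t+c\,\E[(X-t)_+^p]^{1/p}$, whose infimum over $t$ is $\rho_{p,c}(X)$ by definition --- and matching it against the first infimum, which equals $\rho_{p,c}(X)$ by Proposition \ref{prop:ot_rep_higher} and Theorem \ref{theo:duality}. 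Your Fenchel conjugate computation agrees with Lemma \ref{lemma:fenchel_power_bis}, your optimal $u^\star(t)$ agrees with the paper's $\bar u$, and your handling of the degenerate case $\E[(X-t)_+^p]=0$ (where $g_{t,0}^*=\iota_{(-\infty,t]}$) matches the paper's case $\bar a=0$. A further difference: for the existence of $\bar t$ the paper cites \cite[Example 1.4 and Theorem 1]{krokhmal2007higher}, whereas you prove it directly via continuity and coercivity (the bound $t+c\,\|(X-t)_+\|_p\geq c\,\E[X]+(1-c)t$ with $c>1$ handles $t\to-\infty$, and the objective dominates $t$ as $t\to+\infty$), which makes your argument more self-contained. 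What the paper's approach buys is the explicit dual description of $\sigma_{R_{q,c}}$, reusable for the $\phi$-divergence generalization sketched afterwards; what yours buys is brevity and independence from the Fenchel--Rockafellar machinery of Lemma \ref{lemma:computation_sigma}.
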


The proof of this proposition relies on the following two technical lemmas, in which we explicitly compute the support function $\sigma_{R_{q,c}} \colon \Ccal^0_p(\R_+) \to [0,+\infty]$ of the set $R_{q,c}\subset\Pcal_q(\R)$ introduced above, and the Fenchel conjugate of $g_{t,u} : \R_+ \to \R$.

\begin{lemma}[Dual formulation of the support function]
\label{lemma:computation_sigma}
For all $g \in \mathcal{C}_p^0(\R_+)$, it holds that
\begin{equation}
\label{eq:formula_sigmaRq}
\sigma_{R_{q,c}}(g)
= 
\left\{
\begin{aligned}
\inf_{(s,t,u) \in \R^3} & \, s + t + u c^q, \\
\mathrm{s.t.} \,\,~~ & \left\{
\begin{aligned}
& g(y) \leq s + ty + uy^q ~~ \text{for all $y \in \R_+$}, \\
& u \geq 0.
\end{aligned}
\right.
\end{aligned}
\right.
\end{equation}
\end{lemma}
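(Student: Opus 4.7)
The identity is a classical LP/convex duality statement: $\sigma_{R_{q,c}}(g)$ is the value of the linear maximization $\sup\int g\,dr$ over the cone of nonnegative Borel measures on $\R_+$ subject to two equality constraints ($\int dr=\int y\,dr=1$) and one inequality constraint ($\int y^q\,dr\leq c^q$), and the RHS of \eqref{eq:formula_sigmaRq} is precisely the associated Lagrangian dual, with multipliers $s,t\in\R$ for the equalities and $u\geq 0$ for the inequality. The plan is to split the proof into the standard weak- and strong-duality steps.

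\textbf{Weak duality} ($\sigma_{R_{q,c}}(g)\leq$ RHS) is immediate: for any dual-feasible $(s,t,u)\in\R\times\R\times\R_+$ and any $r\in R_{q,c}$, integrating $g(y)\leq s+ty+uy^q$ against $r$ and using $\int dr=\int y\,dr=1$, $u\geq 0$, and $\int y^q\,dr\leq c^q$ yields $\int g\,dr\leq s+t+uc^q$. Passing to the supremum over $r$ and the infimum over $(s,t,u)$ gives the bound.

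\textbf{Strong duality} ($\sigma_{R_{q,c}}(g)\geq$ RHS) is the heart of the proof, and the cleanest route is concave biconjugation. I would introduce the marginal function
$$F(a,b,c_0):=\sup\Big\{\textstyle\int g\,dr\ :\ r\geq 0 \text{ Borel on }\R_+,\ \int dr=a,\ \int y\,dr=b,\ \int y^q\,dr\leq c_0\Big\}$$
on $\R\times\R\times\R_+$, so that $\sigma_{R_{q,c}}(g)=F(1,1,c^q)$. Concavity of $F$ follows by convex-combining admissible measures. A direct interchange of infima then gives the concave Legendre conjugate
$$F^*(s,t,u)=\inf_{(a,b,c_0)}\{sa+tb+uc_0-F(a,b,c_0)\}=\begin{cases}0 & \text{if } u\geq 0 \text{ and } s+ty+uy^q\geq g(y)\ \forall y\geq 0,\\ -\infty & \text{otherwise,}\end{cases}$$
where the constraint $u\geq 0$ comes from optimizing over the slack $c_0-\int y^q\,dr\geq 0$ and the pointwise inequality from the fact that $\inf_{r\geq 0}\int h\,dr$ equals $0$ if $h\geq 0$ on $\R_+$ and $-\infty$ otherwise. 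Consequently the biconjugate $F^{**}(1,1,c^q)$ coincides with the RHS of \eqref{eq:formula_sigmaRq}, and by the Fenchel-Moreau theorem one has $F(1,1,c^q)=F^{**}(1,1,c^q)$ as soon as $F$ is upper-semicontinuous at $(1,1,c^q)$.

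The main obstacle is thus verifying this upper-semicontinuity, which I would establish by showing that $(1,1,c^q)$ lies in the interior of the effective domain of $F$, on which concave functions are automatically continuous. This is where a Slater-type condition enters: the Dirac mass $\delta_1$ belongs to $R_{q,c}$ and satisfies $\int y^q\,d\delta_1=1<c^q$ thanks to the hypothesis $c>1$, so it lies strictly inside the inequality constraint. Perturbing $\delta_1$ by small atoms on $\R_+$ then produces admissible measures with moments filling an open neighborhood of $(1,1,c^q)$, showing $F>-\infty$ there; the necessary matching upper bound on $F$ is granted by the growth assumption on $g$, which ensures $\int g\,dr$ stays uniformly bounded for measures with controlled $q$-moment. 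This closes the proof.
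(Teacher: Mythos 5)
Your argument is correct, but it runs the duality in the opposite direction from the paper. The paper starts from the finite-dimensional minimization on the right-hand side of \eqref{eq:formula_sigmaRq}, writes it as $\inf_{(s,t,u)} F(s,t,u)+G(A(s,t,u))$ with $A(s,t,u)=se_0+te_1+ue_q$ valued in $\mathcal{C}_q^0(\R_+)$ and $G$ the indicator of the pointwise constraint $h\geq g$, verifies the qualification condition $0\in\mathrm{int}\bigl(A\,\mathrm{dom}(F)-\mathrm{dom}(G)\bigr)$ by showing this set is all of $\mathcal{C}_q^0(\R_+)$, and then applies Fenchel--Rockafellar to recover $\sigma_{R_{q,c}}(g)$ as the dual value; the dual variables are elements of $\mathcal{C}_q^0(\R_+)^*$ that get identified with the measures in $R_{q,c}$. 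You instead start from the maximization over measures and perturb the \emph{moment constraints}, so your value function $F(a,b,c_0)$ lives on $\R^3$ and the constraint qualification becomes a genuine finite-dimensional Slater condition, satisfied by $\delta_1$ precisely because $c>1$. This buys you a more elementary argument: concavity of $F$, an explicit concave conjugate, and continuity of a finite concave function on the interior of its domain, with no need to manipulate the topological dual of $\mathcal{C}_q^0(\R_+)$ or to represent its elements as Radon measures (a point the paper's computation of $G^*$ actually glosses over). The price is that your interiority step is only sketched: to fill a neighborhood of $(1,1,c^q)$ in $\mathrm{dom}(F)$ you must place the perturbing atoms \emph{close to} $y=1$ (not merely give them small mass), so that the resulting $q$-th moments remain strictly below $c^q$ and the slack in the inequality constraint absorbs the third coordinate; with atoms at fixed locations far from $1$ the moment $\int y^q\dd r$ can overshoot $c^q$ when $c$ is close to $1$. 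Since you explicitly invoke $1<c^q$, this is a completable detail rather than a gap, and both routes deliver the same identity.
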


\begin{proof}
The proof of this result essentially consists in applying the Fenchel-Rockafellar duality theorem (see e.g. \cite{Rockafellar1966}) to the minimization problem in the right-hand side of \eqref{eq:formula_sigmaRq}, where $g \in\Ccal^0_p(\R_+)$ is given. To this end, we first need to fix a few notations. Consider first the functions $e_{\ell} \colon y \in\R_+ \mapsto y^{\ell} \in \R_+$ defined for each $\ell \in \R_+$, as well as the bounded linear operator $A \colon \R^3 \rightarrow \mathcal{C}_q^0(\R_+)$ given by
\begin{equation*}
A(s,t,u)= se_0 + te_1 + ue_q.
\end{equation*}
Next, denote by $\mathcal{C}_q^0(\R_+,\R_+)$ the subset of nonnegative functions in $\mathcal{C}_q^0(\R_+)$, and consider the mapping $G \colon \mathcal{C}_q^0(\R_+) \rightarrow [0,+\infty]$ given by 
\begin{equation*}
G(h) \triangleq \iota_{\mathcal{C}_q^0(\R_+,\R_+)}(h-g),
\end{equation*}
for every $h \in \mathcal{C}_q^0(\R_+)$. Finally, define the extended real-valued function $F \colon \R^3 \rightarrow \R \cup \{ + \infty \}$ as 
\begin{equation*}
F(s,t,u) \triangleq s+ t + u c^q + \iota_{\R_+}(u)
\end{equation*}
for every $(s,t,u) \in \R^3$. With these notations, the minimization problem in \eqref{eq:formula_sigmaRq} can writes as
\begin{equation*}
\inf_{(s,t,u) \in \R^3} F(s,t,u) + G(A(s,t,u)).
\end{equation*}
Since the functions $F$ and $G$ are clearly proper, lower semicontinuous and convex functions, we may apply Fenchel-Rockafellar's duality theorem \cite{Rockafellar1966} provided that $0 \in \text{int}(\Xi)$, where
\begin{align*}
\Xi \triangleq {} &
A \, \text{dom}(F)
- \text{dom}(G) \\
= {} & \bigg\{
\tilde{h} \in \mathcal{C}_q^0(\R_+)
~\,\text{s.t.}~
\tilde{h} = se_0 + te_1 + u e_q - h - g ~~\text{for some}~
(t,s,u) \in\R^2 \times\R_+ \\
& \hspace{9.7cm} \text{and}~ h \in \mathcal{C}_q^0(\R_+, \R_+) 
\bigg\}.
\end{align*}
Take any $\tilde{h} \in \mathcal{C}_q^0(\R_+,\R_+)$, and note that necessarily $g + \tilde{h} \in\mathcal{C}_q^0(\R_+)$. By definition, this implies in particular that there exists some $u \geq 0$ such that 
\begin{equation*}
g(y) + \tilde{h}(y) \leq u e_0(y) + u e_q(y)    
\end{equation*}
for every $y \in \R_+$. Define then $h \triangleq (ue_0 + u e_q) - (g+ \tilde{h})$, and note that $h \in \mathcal{C}_q^0(\R_+,\R_+)$ quite clearly from what precedes. In summary, we have shown that $\tilde{h} \in \Xi$ with $(s,t,u) = (u,0,u)$ and $h \in C^0_q(\R_+,\R_+)$ given above, and since the latter was arbitrary it immediately follows that $\Xi = \mathcal{C}_q^0(\R_+)$, so that in particular $0 \in \text{int}(\Xi)$. We may thus apply Fenchel-Rockafellar's duality theorem to obtain
\begin{equation}
\label{eq:FenchelRockafellar1}
\inf_{(s,t,u) \in \R^3} F(s,t,u) + G(A(s,t,u))
= \sup_{r \in \mathcal{C}_q^0(\R_+)^*}
- F^*(A^*r) - G^*(-r)
\end{equation}
We are now left with computing the Fenchel conjugates appearing in the previous expression. This is the matter of elementary computations, through which one can show that 
\begin{equation}
\label{eq:FenchelRockafellar2}
\left\{
\begin{aligned}
& F^*(\alpha,\beta,\gamma)= \iota_{\{ 1 \} \times \{ 1 \} \times (-\infty, c^q]}(\alpha,\beta,\gamma), \\
& G^*(r)= \int_{\R_+} g(y) \dd r(y) + \iota_{-\mathcal{M}_+(\R_+) \cap \mathcal{C}_q^0(\R_+)^*}(r), \\
& A^* r = \left( \textstyle{\int_{\R_+}} e_0(y) \dd r(y), \textstyle{\int_{\R_+}} e_1(y) \dd r(y), \textstyle{\int_{\R_+}} e_q(y) \dd r(y) \right).
\end{aligned}
\right.
\end{equation}
for every $(\alpha,\beta,\gamma) \in\R^3$ and each $r \in \Ccal^0_q(\R_+)^*$. In the expression of $G^*(r)$, the symbol $\mathcal{M}_+(\R_+)$ refers to the set of nonnegative Radon measures over $\R_+$. It then follows from \eqref{eq:FenchelRockafellar2} that
\begin{align*}
\sup_{r \in \mathcal{C}_q^0(\R_+)^*}
- F^*(A^*r) - G^*(-r)
& = \left\{
\begin{aligned}
& \sup_{r \in \mathcal{M}_+(\R_+) \cap \mathcal{C}_q^0(\R_+)^*}
\int_{\R_+} g(y) \dd r(y), \\
& \hspace{1.25cm}  \mathrm{s.t.} \hspace{0.8cm} 
\begin{cases}
\int_{\R_+} e_0(y) \dd r(y) = \int_{\R_+} e_1(y) \dd r(y) = 1 \\
\int_{\R_+} e_q(y) \dd r(y) \leq c^q
\end{cases}
\end{aligned}
\right.
\\
& =  \sigma_{R_{c,q}}(g),
\end{align*}
which combined with\eqref{eq:FenchelRockafellar1} finally yields
\begin{equation*}
\inf_{(s,t,u) \in \R^3} F(s,t,u) + G(A(s,t,u)) = \sigma_{R_{c,q}}(g)  
\end{equation*}
as was to be shown.
\end{proof}

\begin{lemma}[Fenchel conjugate of $g_{t,u}$]
\label{lemma:fenchel_power_bis}
For any $(t,u) \in \R \times\R_+$, it holds that
\begin{equation*}
g_{t,u}^*(x) = \frac{1}{p} (uq)^{-(p-1)} (x-t)_+^p
\end{equation*}
for every $x \in \R$.
\end{lemma}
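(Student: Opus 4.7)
The plan is to compute the Fenchel conjugate of $g_{t,u}(y) = ty + u y^q$ by direct one-variable optimization. Since $g_{t,u}$ is defined on $\R_+$, the conjugate reads
\begin{equation*}
g_{t,u}^*(x) = \sup_{y \geq 0} \bigl\{ (x-t) y - u y^q \bigr\}.
\end{equation*}
I would treat the nondegenerate case $u > 0$ as the core computation, and dispatch the degenerate case $u = 0$ separately using the standard conventions of extended-real arithmetic.

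Assume first that $u > 0$. The map $y \mapsto (x-t)y - u y^q$ is strictly concave on $\R_+$ (since $q > 1$), hence admits a unique maximizer. If $x \leq t$, its derivative $(x-t) - u q y^{q-1}$ is nonpositive on $\R_+$, so the maximum is $0$, attained at $y=0$, which matches $\tfrac{1}{p} (uq)^{-(p-1)}(x-t)_+^p = 0$. If on the contrary $x > t$, the first-order condition pinpoints the critical point
\begin{equation*}
y^{\star} = \bigg( \frac{x-t}{uq} \bigg)^{\!1/(q-1)} = \bigg( \frac{x-t}{uq} \bigg)^{\!p-1},
\end{equation*}
where I have used the identity $1/(q-1) = p-1$, which is a direct consequence of $\tfrac{1}{p} + \tfrac{1}{q} = 1$.

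Substituting $y^{\star}$ back into the objective and exploiting the further identities $q(p-1) = p$ and $(q-1)/q = 1/p$ gives
\begin{equation*}
(x-t) y^{\star} - u (y^{\star})^q = \frac{(x-t)^p}{(uq)^{p-1}} - \frac{(x-t)^p}{q\,(uq)^{p-1}} = \frac{q-1}{q} \cdot \frac{(x-t)^p}{(uq)^{p-1}} = \frac{1}{p} (uq)^{-(p-1)} (x-t)^p,
\end{equation*}
which is the stated formula. For the case $u = 0$, the conjugate reduces to $\sup_{y \geq 0}(x-t)y$, equal to $0$ when $x \leq t$ and $+\infty$ when $x > t$; this is consistent with the stated formula under the convention $0^{-(p-1)} (x-t)_+^p = 0$ when $(x-t)_+ = 0$ and $+\infty$ otherwise.

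There is no real obstacle in this argument beyond the careful tracking of the conjugate exponents $p$ and $q$; everything else is a routine application of calculus and concavity.
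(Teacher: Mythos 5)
Your proof is correct. The computation of the critical point, the exponent identities $1/(q-1)=p-1$ and $q(p-1)=p$, and the final simplification all check out, and your treatment of the degenerate case $u=0$ is consistent with how the formula is used later in the paper (where $g_{\bar t,0}^*=\iota_{(-\infty,\bar t]}$).

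The route is organized differently from the paper's. You perform the one-variable optimization for general $(t,u)$ in a single direct calculation. The paper instead isolates the standard Young conjugate pair, computing only $g_{0,1/q}^*(x)=\sup_{y\ge 0}\{xy-y^q/q\}=x_+^p/p$, and then obtains the general case by the homogeneity rule $(\lambda f)^*(x)=\lambda f^*(x/\lambda)$ applied with $\lambda=uq$, followed by the translation identity $g_{t,u}^*(x)=g_{0,u}^*(x-t)$. The paper's factorization keeps the exponent bookkeeping confined to one canonical computation and reuses standard conjugacy rules, but its scaling step implicitly assumes $u>0$ (it divides by $uq$), whereas your direct argument dispatches $u=0$ explicitly — a small but genuine point in favor of your version, since the lemma is stated for all $u\in\R_+$.
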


\begin{proof}
We begin by the situation in which $t=0$. Recalling the definition \eqref{eq:gtuDef} of the map $g_{t,u} : \R_+\to\R$, it holds in this case that 
\begin{equation*}
g_{0,u}(y)= (uq) g_{0,1/q}(y)
\end{equation*}
for every $y\in\R_+$, from which it is easily deduced by using classical homogeneity results on Fenchel conjugates that
\begin{equation*}
g_{0,u}^*(x)= (uq) g^*_{0,1/q} \Big(\frac{x}{uq} \Big).     
\end{equation*}
As it can be straightforwardly verified that 
\begin{equation*}
g_{0,1/q}^*(x)
= \sup_{y \geq 0} \, \Big\{ xy - \frac{1}{q} y^q \Big\} = \frac{1}{p} x_+^p,
\end{equation*}
it follows from the above considerations that
\begin{equation*}
g_{0,u}^*(x)= \frac{uq}{p} \left( \frac{x_+}{uq} \right)^p
= \frac{1}{p} (uq)^{-(p-1)} (x_+)^p 
\end{equation*}
which settles the case $t=0$. For arbitrary $t \in\R$, it is enough to notice that $g_{t,u}^*(x)= g_{0,u}^*(x-t)$, and the desired identity immediately follows.
\end{proof}

\begin{proof}[Proof of Proposition \ref{prop:higher_moment_duality}]
\label{proof:higher_moment}
The first equality in \eqref{eq:prop:higher} directly follows from Theorem \ref{theo:duality}. Regarding the second one, we note first that due to Lemma \ref{lemma:computation_sigma}, it holds that
\begin{equation*}
\rho(X)
= 
\left\{
\begin{aligned}
\inf_{
\begin{subarray}{c}
g \in \mathcal{C}_q^0(\R_+) \\[0.1em]
(s,t,u) \in \R^3
\end{subarray}} & \, \E\big[g^*(X) \big] + s + t + u c^q, \\
\mathrm{s.t.} \,\,~~ & \left\{
\begin{aligned}
& g(y) \leq s + ty + uy^q ~~ \text{for all $y \in \R_+$}, \\
& u \geq 0.
\end{aligned}
\right.
\end{aligned}
\right.
\end{equation*}
Given any feasible tuple $(g,s,t,u) \in\Ccal^0_q(\R_+) \times\R^2 \times\R_+$ for the above problem, note at first that $(\tilde{g},\tilde{s},\tilde{t},\tilde{u}) \triangleq(g-s,0,t,u)$ is also feasible with the same cost. Consequently, we may set $s = 0$ in the minimization problem. Observe next that the shifted map $\tilde{g} \in\Ccal^0_q(\R_+)$ can be replaced by the map $g_{t,u} \in \mathcal{C}_q^0(\R_+)$ defined in \eqref{eq:gtuDef}. Indeed, it clearly holds that $\tilde{g} \leq g_{t,u}$, and so $g_{t,u}^* \leq \tilde{g}^*$, which then achieves a smaller cost. In summary, we have shown that
\begin{equation}
\label{eq:intermediate_pb}
\rho_{R_{q,c}}(X)
= \inf_{(t,u) \in\R\times\R_+}
\mathbb{E} \big[ g_{t,u}^*(X) \big] + t + uc^q.
\end{equation}
for every $X \in\Lbb^p(\Omega,\R)$, which is indeed the second equality in \eqref{eq:prop:higher}. 

To conclude, there remains to show the existence of solutions to both problems in \eqref{eq:prop:higher}. To this end, recalling the definition \eqref{eq:def_high_order_measure} of $\rho_{p,c} : \Lbb^p(\Omega,\R) \to \R$, it follows e.g. from \cite[Example 1.4 and Theorem 1]{krokhmal2007higher} that for each $X \in\Lbb^p(\Omega,\R)$, there exists some $\bar{t} \in\R$ such that
\begin{equation*}
\rho_{p,c}(X) = c \, \mathbb{E} \big[ (X-\bar{t})_+^p \big]^{1/p} + \bar{t}.
\end{equation*}
Let us then set $\bar{a} \triangleq \mathbb{E}[ (X-\bar{t})_+^p]$ and consider the competitor
\begin{equation*}
\bar{u} \triangleq \frac{1}{q c^{q-1}} a^{1/p}
= \frac{1}{q c^{q-1}} \mathbb{E}\big[ (X-\bar{t})_+^p \big]^{1/p}.
\end{equation*}
We first prove the existence of minimizers in the case where $\bar{a}=0$. This condition means that $X \leq \bar{t}$ almost surely and so $\rho_{p,c}(X)=\bar{t}$. It then follows from its very definition that $\bar{u}=0$, so that $g_{\bar{t},\bar{u}}^*= \iota_{(-\infty,\bar{t}]}$ and 
\begin{equation*}
\begin{aligned}
\mathbb{E} \big[ g_{\bar{t},\bar{u}}^*(X) \big] + \bar{t} + \bar{u}c^q & = \bar{t} \\
& = \rho_{p,c}(X).
\end{aligned}
\end{equation*}
where we again used that $X \leq \bar{t}$ almost surely. This proves the optimality of $(\bar{t},\bar{u})$ for the second problem in \eqref{eq:prop:higher}. The fact that $g_{\bar{t},\bar{u}}$ is a minimizer for the first one follows then from Lemma \ref{lemma:computation_sigma}. If we assume now that $\bar{a}>0$, it is easy to verify from Lemma \ref{lemma:fenchel_power_bis} above that
\begin{equation*}
g_{\bar{t},\bar{u}}^*(x)
= \frac{c}{p} \bar{a}^{\frac{1}{p}-1}(x-\bar{t})_+^p,
\end{equation*}
for every $x \in \R$.
Upon noting that $uc^q= \frac{c}{q}a^{1/p}$ as well, it follows that 
\begin{equation*}
\begin{aligned}
\mathbb{E} \big[ g_{\bar{t},\bar{u}}^*(X) \big] + \bar{t} + \bar{u}c^q
& = \frac{c}{p}\bar{a}^{\frac{1}{p}-1}\bar{a}
+ \frac{c}{q}\bar{a}^{1/p} + \bar{t} \\
& = c\bar{a}^{1/p} + \bar{t} \\
& = \rho_{p,c}(X).
\end{aligned}
\end{equation*}
This proves the optimality of $(\bar{t},\bar{u}) \in\R\times\R_+$ for the second problem in \eqref{eq:prop:higher}. The fact that $g_{\bar{t},\bar{u}} \in\Ccal^0_q(\R_+)$ is a minimizer for the first one follows again from Lemma \ref{lemma:computation_sigma}.
\end{proof}

\begin{remark}[Generalization to $\phi$-divergences]
We conjecture that the arguments developed above to study higher moment measures can be generalized to arbitrary \textit{$\phi$-divergences} (see \cite{ben1987penalty,shapiro2017distributionally}). Given a convex and lower semicontinuous function $\phi \colon \R_+ \rightarrow \R \cup \{ + \infty \}$ with $\phi(1)=0$, the term $\phi$-divergence commonly refers to the licorm $\rho : \Lbb^p(\Omega,\R) \to\R \cup\{+\infty\}$ defined by
\begin{equation*}
\rho(X)
\triangleq
\sup_{Y \in\Lbb^q(\Omega,\R)} \bigg\{ \mathbb{E}[XY ] ~\,\textnormal{s.t.}~ Y \geq 0, ~~ \mathbb{E}[ Y]= 1 ~~\text{and}~~ \mathbb{E}[\phi(Y)] \leq c \bigg\},
\end{equation*}
for some $c> 0$. Note that the latter can be made real-valued as soon as $\phi : \R_+ \to \R \cup\{+\infty\}$ satisfies some suitable growth conditions. Moreover, it was shown in \cite[Theorem 5.1]{ahmadi2012entropic} that
\begin{equation}
\label{eq:duality_phi}
\rho(X)= \inf_{(t,u) \in \R \times\R_+}
\mathbb{E}\big[ (\lambda \phi)^*(X-t) \big] + t + uc.
\end{equation}
Given the above, it is therefore reasonable to infer that $\rho= \rho_R$ with
\begin{equation*}
R= \bigg\{ r \in \mathcal{P}(\R_+) ~\,\text{s.t.}~
\mathbb{E}[r]= 1 ~\text{and}~ \int_{\R_+}\phi(y) \dd r(y) \leq c \bigg\}.
\end{equation*}
Indeed, informal computations (that would need to be made precise in the spirit of Lemma \ref{lemma:computation_sigma} above) suggest that
\begin{equation*}
\sigma_{R_{q,c}}(g)
= 
\left\{
\begin{aligned}
\inf_{(s,t,u) \in \R^3} & \, s + t + u c^q, \\
\mathrm{s.t.} \,\,~~ & \left\{
\begin{aligned}
& g(y) \leq s + ty + u \phi(y) ~~ \text{for all $y \in \R_+$}, \\
& u \geq 0.
\end{aligned}
\right.
\end{aligned}
\right.
\end{equation*}
which would then allow to recover the duality formula \eqref{eq:duality_phi}, thanks to Theorem \ref{theo:duality}.
\end{remark}

\section{Conclusion}

We have established a new characterization of law-invariant and coherent risk measures through a generalized optimal transport problem between the probability distribution of the random variable of interest, and a given target set $R \subset\Pcal_q(\R)$. Our representation is strongly related to the famed Kusuoka theorem, and is of particular interest when the set $R$ is convex. In this case, the associated risk measure $\rho_R : \Lbb^p(\Omega,\R) \to\R$ can be expressed in terms of a minimization problem, which can be utilized in a numerical perspective. For example, assuming that $R$ is convex, the Moreau envelope of $\rho_R$ can also be represented as a minimization problem, which can then be leveraged e.g. in the augmented-Lagrangian approach developed in \cite{kouri2022primal}. Future work may also focus on the extension of the progressive hedging algorithm introduced in \cite{rockafellar2002conditional}, which was adapted to the Conditional Value-at-Risk in \cite{rockafellar2018solving}. 

\bibliographystyle{plain}
{\footnotesize 
\bibliography{ref}}

\end{document}